\documentclass[12pt]{article}
\usepackage{amsthm}
\usepackage{amssymb}
\usepackage{amsfonts}
\usepackage{amsmath}
\usepackage{xcolor}

\begin{document}

\newtheorem{theorem}{Theorem}[section]
\newtheorem{defi}{Definition}[section]
\newtheorem{corollary}[theorem]{Corollary}
\newtheorem{observation}[theorem]{Observation}
\newtheorem{lemma}[theorem]{Lemma}
\newtheorem{proposition}[theorem]{Proposition}
\newtheorem{conjecture}[theorem]{Conjecture}
\newtheorem{step}[theorem]{Step}
\newtheorem{example}[theorem]{Example}
\newtheorem{remark}[theorem]{Remark}

\newcommand{\tr}{{\rm tr}}
\newcommand{\supp}{{\rm supp}}
\newcommand{\linspan}{{\rm span}}
\newcommand{\rank}{{\rm rank}}
\newcommand{\diag}{{\rm Diag}}
\newcommand{\Image}{{\rm Im}}
\newcommand{\Ker}{{\rm Ker}}
\newcommand\Bdd{{\cal B}}
\newcommand\Effect{{\cal E}}
\newcommand\Proj{{\cal P}}
\newcommand\Sca{\mathcal{SC}}
\newcommand\Borel{{\cal B}}
\newcommand\calC{{\cal C}}
\newcommand\calT{{\cal T}}
\newcommand\FiniteRank{{\cal F}}
\newcommand\R{\mathbb{R}}
\newcommand\N{\mathbb{N}}
\newcommand\C{\mathbb{C}}

\newcommand\E{\ell}
\newcommand\calM{{\cal M}}
\newcommand\pc{\mathfrak{c}}

\newcommand{\enp}{\begin{flushright} $\Box$ \end{flushright}}

\newcommand{\Pn}{P^1_n(\mathbb C)}
\newcommand{\Mre}{{\mathcal M}_{r}^1}
\newcommand{\Mrk}{{\mathcal M}_{r}^2}
\newcommand{\LMre}{{\mathcal{LM}}_{r}^1}
\newcommand{\LMrk}{{\mathcal{LM}}_{r}^2}
\newcommand{\Mu}{{\mathcal M}_{u}}
\newcommand{\LMu}{{\mathcal{LM}}_{u}}
\newcommand{\Eu}{\mathcal{E}_s}

\newcommand{\Pne}{I_n^1(\mathbb F)}
\newcommand{\Se}{\mathbb S^1}

\title{Local automorphisms of some classical groups\thanks{This research was supported by 
by the National Research, Development and Innovation Office of
Hungary, NKFIH, Grant No. K134944, $\text{ADVANCED}\_150059$, and $\text{EXCELLENCE}\_151232$ and
the grants N1-0368, J1-60025, and P1-0288 from ARIS, Slovenia.}}

\author{Lajos Moln\'ar\footnote{Bolyai Institute, University of Szeged, and 
HUN-REN–SZTE Analysis and Applications Research Group,
Aradi v\'ertan\'uk tere 1., H-6720 Szeged,
Hungary, and HUN-REN Alfréd Rényi Institute of Mathematics, Reáltanoda u. 13-15., Budapest H-1053, Hungary, and 
Department of Analysis and Operations Research, Institute of Mathematics, Budapest University of Technology and Economics, Műegyetem rkp. 3., H-1111 Budapest, Hungary,
molnarl@math.u-szeged.hu}
\enskip and
Peter \v Semrl\footnote{ Institute of Mathematics, Physics and Mechanics, Jadranska 19, SI-1000 Ljubljana, Slovenia; Faculty of Mathematics and Physics, University of Ljubljana, Jadranska 19, SI-1000 Ljubljana, Slovenia, peter.semrl@fmf.uni-lj.si}}

\date{}

\maketitle

\begin{abstract}
A map on a group into itself is called a local automorphism if at any two points of the group, it can be interpolated by an automorphism of that group.
In this paper we investigate the question of how local automorphisms of some classical groups are related to automorphisms. In some cases it turns out that the local automorphisms are in fact automorphisms. In the remaining cases we show that the local automorphisms are still closely related to the automorphisms.	
\end{abstract}
\maketitle

\bigskip
\noindent AMS classification: 11E57, 20E36.

\bigskip
\noindent
Keywords: Groups, classical groups, automorphisms, local automorphisms.



\section{Introduction}

The concept of local maps (such as local automorphisms, local derivations) on Banach algebras was introduced independently by Kadison \cite{Kad}, and Larson and Sourour \cite{LaS}.
A linear map $\phi$ of a Banach algebra ${\cal A}$ is a local automorphism if for every $a\in {\cal A}$, there exists an automorphism  $\phi_a : {\cal A} \to {\cal A}$ (depending on $a$)
such that $\phi (a) = \phi_a (a)$. The main question is whether local implies global. More precisely, usually we have two questions. Is every local automorphism an automorphism? When the answer is negative, one may ask whether, imposing the extra assumption that the local automorphism $\phi$ under consideration is bijective, it is necessarily an automorphism (cf. \cite{LaS})? 

An automorphism $\phi$ of an algebra ${\cal A}$ is a bijective map having two algebraic properties, linearity and multiplicativity. When defining a local automorphism as above, we actually keep the linearity assumption and replace the multiplicativity assumption by the local property. It 
seems natural to ask whether we can omit also the linearity assumption. If we just drop it, then the same question as above makes no sense for most Banach algebras. For example, if for a given unital Banach algebra ${\cal A}$ all automorphisms are inner, then it is trivial that a (not necessarily linear) map $\phi : {\cal A} \to {\cal A}$ is a local automorphism if and only if for every $a \in {\cal A}$, the element $\phi (a)$ is contained in the similarity orbit of $a$, nothing more can be said.  Therefore, a stronger assumption is needed when we want to study local automorphisms in the absence of the linearity assumption. This led the second author of this paper to introduce the notion of 2-local automorphisms.  In \cite{Sem}, a not necessarily linear map $\phi : {\cal A} \to {\cal A}$ on a Banach algebra ${\cal A}$ was defined to be a 2-local automorphism if for every pair of elements $a,b \in {\cal A}$,  there exists an automorphism $\phi_{a,b} : {\cal A} \to {\cal A}$ 
such that $\phi (a) = \phi_{a,b} (a)$ and  $\phi (b) = \phi_{a,b} (b)$.

Since then quite a lot of attention has been paid to local and 2-local isomorphisms, derivations, Lie homomorphisms, isometries, etc. For a gentle introduction to the topic, one can consult the survey paper \cite{AKP} or the third chapter of the book \cite{Mol}. At the beginning, this problem was studied mainly from the functional analysis point of view. Recently, there have been more results of this type that belong to abstract algebra. 

However, it is somewhat surprising that while local automorphisms of algebras have been studied a lot, the problem has been almost untouched in the setting of groups. On the one hand, it is true that when dealing with groups, the original definition of Kadison, Larson, and Sourour makes no sense because there the linearity is involved. However, the notion of 2-local automorphisms seems to be very natural in the setting of groups. 

Our attention in this paper is limited to groups entirely, therefore we adopt the following definition of local automorphisms.

\begin{defi}
Let $G$ be a group. A map $\phi : G \to G$ is called a local automorphism of $G$ if for every pair of elements $a, b\in G$, there exists an automorphism  $\phi_{a,b} : G \to G$ 
such that $\phi (a) = \phi_{a,b} (a)$ and $\phi (b) = \phi_{a,b} (b)$.
\end{defi}

Of course, the question is whether every local automorphism is necessarily an automorphism. When the answer happens to be negative, we can try to describe the general form of the local automorphisms, and then see whether every bijective local automorphism is an automorphism. 
We are aware of essentially one paper only that is devoted to this problem concerning groups. In Theorem 3.2 in \cite{MoS}, it was proved that every local automorphism of the general linear group on a complex infinite-dimensional separable Hilbert space $H$ is an automorphism. The same is true for the unitary group on $H$. Indeed, to see that, one should combine Theorem 2.2 in \cite{MoS} with Theorem 2.5 in \cite{ML12b}. 

Below we study those questions for some of the classical groups over the real or complex field. More precisely, we deal with the general linear group, the special linear group, the unitary group and the special unitary group. We prove that "local implies global" holds in some of the cases. And even when local automorphisms are not necessarily automorphisms, we show that they can still be obtained from automorphisms by some rather minor modifications. As a matter of fact, we have the first type of answers, i.e., local implies global, for the real special linear group and for the complex special unitary group, while concerning the remaining groups we have the second type of answers, i.e., local implies global modulo some minor modifications. We mention that, as we will see, the finite-dimensional setting that we are considering here is more complicated than the infinite-dimensional one considered in \cite{MoS} (compare the present results with the ones in \cite{MoS}). The main reasons are the nonexistence of nontrivial multiplicative scalar functions on the mentioned classical groups defined on infinite-dimensional Hilbert spaces
and the automatic continuity results for their group automorphisms that do not hold in the finite-dimensional case.

Before one may think that local automorphisms of groups are usually close to automorphisms, let us present a simple example illustrating that local automorphisms in general can  behave rather widely, in fact they can be very far from automorphisms. Consider the additive group $(\mathbb{R}, +)$ of the reals.
The automorphisms $\phi : \mathbb{R} \to \mathbb{R}$ are the bijective solutions of the Cauchy functional equation
$$
\phi (x+y) = \phi (x) + \phi (y), \ \ \ x,y \in \mathbb{R}.
$$
It is well-known (and easy to see) that $\phi : \mathbb{R} \to \mathbb{R}$ satisfies the Cauchy functional equation if and only if $\phi$ is a linear map on $ \mathbb{R}$ into itself, where $ \mathbb{R}$ is considered as a vector space over the field $\mathbb{Q}$ of rational numbers.

So, assume that $\phi$ is a local automorphism of the additive group $\mathbb{R}$. Then clearly, $\phi (0) = 0$. If $a,b \in \mathbb{R}$ are linearly independent over $\mathbb{Q}$ then $\phi (a) = \phi_{a,b} (a)$ and $\phi (b) = \phi_{a,b} (b)$ are linearly independent, too. If $a,b \in \mathbb{R}$ are linearly dependent and both nonzero, then $a = \lambda b$ for some nonzero $\lambda \in \mathbb{Q}$, and consequently, $$
\phi (a) = \phi_{a,b} (a) = \phi_{a,b} (\lambda b) = \lambda \phi_{a,b} (b) = \lambda \phi (b).
$$

For any nonzero $a \in \mathbb{R}$, we denote by $[a]$ the one-dimensional $\mathbb{Q}$-linear subspace of $\mathbb{R}$ spanned by $a$, i.e., $[a] = \{ \lambda a \, | \, \lambda \in \mathbb{Q} \}$. The symbol $\mathbb{P}_{\mathbb{Q}} (\mathbb{R})$ stands for the projective space over the $\mathbb{Q}$-linear space $\mathbb{R}$, $\mathbb{P}_{\mathbb{Q}} (\mathbb{R}) = \{ [a] \, | \, a \in \mathbb{R} \setminus \{ 0 \} \}$. The above considerations yield that the local automorphism $\phi$ induces an injective map $\xi : \mathbb{P}_{\mathbb{Q}} (\mathbb{R}) \to \mathbb{P}_{\mathbb{Q}} (\mathbb{R})$ such that  for every nonzero $a \in \mathbb{R}$, the local automorphism $\phi$ maps $[a]$ bijectively onto $\xi ([a])$ (and if $\phi (a) = b \in \xi ([a])$ then $\phi (\lambda a) = \lambda b$, $\lambda \in \mathbb{Q}$).

Conversely, let $\xi : \mathbb{P}_{\mathbb{Q}} (\mathbb{R}) \to \mathbb{P}_{\mathbb{Q}} (\mathbb{R})$ be any injective map. For every element $\alpha \in  \mathbb{P}_{\mathbb{Q}} (\mathbb{R})$ we chose nonzero $a_\alpha \in \alpha$ and $b_\alpha \in \xi (\alpha)$. We define a map $\phi : \mathbb{R} \to \mathbb{R}$ in the following way. First, set $\phi (0)= 0$. For every nonzero $x \in \mathbb{R}$ there exist a unique $\alpha \in  \mathbb{P}_{\mathbb{Q}} (\mathbb{R})$ and a unique rational number $\lambda$ such that $x = \lambda a_\alpha$. Then we define $\phi (x) = \lambda b_\alpha$. Such a map $\phi$ is a local automorphism of the additive group $\mathbb{R}$. Indeed, all we need to verify is that for any two pairs $x,y \in \mathbb{R}$ and $u,v \in \mathbb{R}$ of linearly independent vectors over $\mathbb{Q}$, there exists a bijective function $\phi : \mathbb{R} \to \mathbb{R}$ which is a solution of the Cauchy functional equation and satisfies $\phi (x) = u$ and $\phi (y) = v$. This follows easily from the fact that for every pair $x,y \in \mathbb{R}$, that is linearly independent over $\mathbb{Q}$, there exists a Hamel basis of $\mathbb{R}$ containing $x$ and $y$. Therefore, the local automorphisms of the additive group $\mathbb{R}$ correspond exactly to the injective self-maps of the projective space $\mathbb{P}_{\mathbb{Q}}(\mathbb R)$ which can apparently behave quite arbitrarily.

We close the introduction with the following.
In various areas of mathematics, a lot of special types of maps are considered and quite often the question whether local properties imply global nice behaviour is studied in various contexts. We recall from the first two paragraphs  of this section that locality as described in this paper has been first introduced in operator theory and functional analysis relating Banach algebras. After introducing linear $1$-local maps, the next step was to introduce 2-local maps (without assuming their linearity). However, such maps can be studied not only on Banach algebras or more abstract algebras but also on various other algebraic structures such as groups, lattices, etc., which are equipped with one operation or relation only. Then, as in the present paper, we can simply speak of local maps rather than of 2-local maps and investigate the question of when local implies global. 

We believe that the results presented in this paper will be interesting for  mathematicians working in various areas. Therefore, we try to make the paper essentially self-contained and hence accessible to researchers not specialized in classical groups.

\section{The automorphisms of the considered classical groups}

In this section we present the descriptions of the automorphisms of the  matrix groups whose local automorphisms we are studying in the paper. For fundamental results of the theory of the  automorphisms of classical groups we refer to \cite{Die}.
 
First, we fix some notation. Throughout the paper we assume that $n$ is a positive integer with $n\geq 3$.
We will restrict our attention to the automorphisms of some matrix groups over the field $\mathbb{F} \in \{ \mathbb{R} , \mathbb{C} \}$. We set $\mathbb{F}^\ast = \mathbb{F} \setminus \{ 0 \}$. 
We denote by $M_n(\mathbb F)$ the algebra of all $n\times n$ matrices with entries in $\mathbb F$.
The symbol $GL_n (\mathbb{F})$ denotes the general linear group consisting of all invertible elements of $M_n(\mathbb F)$. 

Let $\sigma$ be an automorphism of the field $\mathbb{F}$ (field automorphism for short) and $A = [a_{ij}] \in M_n (\mathbb{F})$. Then the symbols $A^t$ and  $A_\sigma$ stand for the transpose of $A$ and the matrix obtained from $A$ by applying $\sigma$ entrywise, $[a_{ij}]_\sigma = [\sigma (a_{ij})]$, respectively. Similarly, if $x\in \mathbb{F}^n$, then $x_\sigma$ denotes the vector obtained from $x$ by applying $\sigma$ entrywise to its components.

The next two propositions concern the structures of the automorphisms of $GL_n (\mathbb{F})$. The first one is about the real case.

\begin{proposition}\label{P:GLaut}
The map $\phi$ of the group $GL_n (\mathbb{R})$ is an automorphism if and only if there exist a $T \in GL_n (\mathbb{R})$
and a multiplicative function $g : \mathbb{R}^\ast \to  \mathbb{R}^\ast$ such that either 
\begin{itemize}
\item the function  $f : \mathbb{R}^\ast \to  \mathbb{R}^\ast$ defined by $f(\lambda) = g(\lambda)^n \lambda $, $\lambda \in \mathbb{R}^\ast$  is an automorphism of the multiplicative group $\mathbb{R}^\ast$ and
\begin{equation}\label{jao}
\phi (A) = g(\det A) TA T^{-1} , \ \ \ A \in  GL_n (\mathbb{R}),
\end{equation}
or
\item the function  $f : \mathbb{R}^\ast \to  \mathbb{R}^\ast$ defined by $f(\lambda) = g(\lambda)^n \lambda^{-1}$, $\lambda \in \mathbb{R}^\ast$ is an automorphism of the multiplicative group $\mathbb{R}^\ast$ and
\begin{equation}\label{jaojao}
\phi (A) = g(\det A) T(A^{-1})^t T^{-1} , \ \ \ A \in  GL_n (\mathbb{R}).
\end{equation}
\end{itemize}
\end{proposition}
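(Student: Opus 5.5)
The plan is to prove the two implications separately. The "if" direction is a direct verification. The "only if" direction reduces, via the classical description of automorphisms of $SL_n(\mathbb{R})$ (cf.\ \cite{Die}), to a study of the central scalar factor.

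For the easy direction, suppose $\phi$ is given by \eqref{jao}. Multiplicativity is clear because $g$ and $\det$ are multiplicative and scalars are central. For bijectivity, observe that $\det \phi(A) = g(\det A)^n \det A = f(\det A)$. If $\phi(A)=I$, then $f(\det A)=1$, so $\det A=1$; hence $g(1)=1$ gives $A=I$, and $\phi$ is injective. For surjectivity, take $B$ and set $\mu = f^{-1}(\det B)$. Choose any $A_0$ with $\det A_0=\mu$. Then $\phi(A_0)$ and $B$ have equal determinants, so $B = \phi(A_0)\,C$ with $C\in SL_n(\mathbb{R})$. On $SL_n(\mathbb{R})$ the map $\phi$ is conjugation, so $C=\phi(T^{-1}CT)$, and $B=\phi(A_0T^{-1}CT)$. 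The case \eqref{jaojao} is identical, using $\det \phi(A) = g(\det A)^n(\det A)^{-1}$.

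For the converse, let $\phi$ be an automorphism.

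\textbf{Step 1.} Since $SL_n(\mathbb{R})$ is the commutator subgroup of $GL_n(\mathbb{R})$, it is characteristic, so $\phi$ restricts to an automorphism of $SL_n(\mathbb{R})$. Likewise the center $\mathbb{R}^\ast I$ is preserved.

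\textbf{Step 2.} Invoke the classical theorem (Dieudonn\'e, Hua) that the automorphisms of $SL_n(\mathbb{F})$, $n\ge 3$, have the form $A\mapsto \chi(A)\,TA_\sigma T^{-1}$ or $A\mapsto\chi(A)\,T(A_\sigma^{-1})^tT^{-1}$, where $\chi$ is a central character. Since $SL_n(\mathbb{R})$ is perfect, $\chi$ is trivial. Since $\mathbb{R}$ has no nontrivial field automorphisms, $\sigma=\mathrm{id}$. This classical theorem is the main obstacle. If a self-contained argument were wanted, I would go through the fundamental theorem of projective geometry: characterize transvections group-theoretically (for instance via centralizers or involutions), show that $\phi$ permutes them preserving their incidence structure, and obtain a semilinear or contragredient collineation.

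\textbf{Step 3.} Let $\psi(A)=T^{-1}\phi(A)T$, or $T^{-1}\phi(A)^{-t}T$ in the second case; write $\tau$ for the corresponding map, either $A\mapsto A$ or $A\mapsto A^{-t}$. For arbitrary $A$, set $h(A) = \psi(A)\tau(A)^{-1}$. For $S\in SL_n(\mathbb{R})$, $AS\in GL_n$ and $ASA^{-1}\in SL_n$, so
$$\psi(A)\psi(S)\psi(A)^{-1}=\psi(ASA^{-1})=\tau(A)\tau(S)\tau(A)^{-1}.$$
Thus $h(A)$ commutes with every element of $SL_n(\mathbb{R})$ and is therefore a scalar. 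The same computation shows that $h$ is multiplicative and trivial on $SL_n$. Hence $h(A)=g(\det A)$ for a multiplicative $g:\mathbb{R}^\ast\to\mathbb{R}^\ast$; this uses that $\det$ induces $GL_n/SL_n\cong\mathbb{R}^\ast$.

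\textbf{Step 4.} This gives the forms \eqref{jao} and \eqref{jaojao}. It remains to show that $f$ is an automorphism. Multiplicativity is clear, and $f(\det A)=\det\phi(A)$. Surjectivity of $f$ follows from the surjectivity of $\phi$. For injectivity, suppose $f(\lambda)=1$. Take $A$ with $\det A=\lambda$; then $\phi(A)\in SL_n=\phi(SL_n)$. By injectivity of $\phi$, $A\in SL_n$, so $\lambda=1$.
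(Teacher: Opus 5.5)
Your argument is correct, but it runs in the opposite direction from the paper's.

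\textbf{The paper's route.} The paper takes Dieudonn\'e's theorem for $GL_n(\mathbb{F})$ itself as the black box. That theorem already delivers $\phi(A)=\chi(A)TA_\sigma T^{-1}$ or $\chi(A)T(A_\sigma^{-1})^tT^{-1}$ with a multiplicative $\chi$. The paper then only notes two things. First, $\chi$ is trivial on the commutator subgroup $SL_n(\mathbb{F})$, so $\chi=g\circ\det$. Second, $\phi$ maps the center $\{\lambda I\}$ bijectively onto itself, which is precisely the bijectivity of $f$. Sufficiency is dismissed as easy.

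\textbf{Your route.} You take the classification of automorphisms of $SL_n(\mathbb{R})$ as the black box and produce the scalar factor yourself. The centralizer argument of Step 3 shows that $\psi(A)\tau(A)^{-1}$ is a scalar homomorphism trivial on $SL_n(\mathbb{R})$. Step 4 then gets bijectivity of $f$ from $\det\phi(A)=f(\det A)$ and $\phi(SL_n(\mathbb{R}))=SL_n(\mathbb{R})$, rather than from the center.

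\textbf{Comparison.} Your route explains where the factor $g(\det A)$ comes from instead of inheriting it from the cited theorem, and it writes out the sufficiency direction in full. The paper's route is shorter and matches its logical order: the paper \emph{derives} Proposition~\ref{P:SL} from the present proposition together with Dieudonn\'e's Theorem 5. So, within the paper, your proof avoids circularity only because the $SL_n$ classification is cited independently, which you do by appealing to Dieudonn\'e and Hua.

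\textbf{One slip to fix.} In the second case $\psi$ should be $A\mapsto T^{-1}\phi(A)T$, not $T^{-1}(\phi(A)^{-1})^{t}T$. If $\phi(S)=T(S^{-1})^tT^{-1}$ on $SL_n(\mathbb{R})$, your formula gives $T^{-1}(T^{-1})^{t}S\,T^{t}T$, which is neither $S$ nor $(S^{-1})^t$ in general. With $\psi(A)=T^{-1}\phi(A)T$ in both cases, $\psi=\tau$ on $SL_n(\mathbb{R})$, and Steps 3 and 4 go through verbatim.
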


The complex case is somewhat more complicated.

\begin{proposition}\label{P:GLautcomplex}
The map $\phi$ of the group $GL_n (\mathbb{C})$ is an automorphism if and only if there exist a $T \in GL_n (\mathbb{C})$, an automorphism $\sigma$ of the field $\mathbb C$, and a multiplicative function $g : \mathbb{C}^\ast \to  \mathbb{C}^\ast$ such that either
\begin{itemize}
\item the function  $f : \mathbb{C}^\ast \to  \mathbb{C}^\ast$ defined by $f(\lambda) = g(\lambda)^n \sigma (\lambda )$, $\lambda \in \mathbb{C}^\ast$ is an automorphism of the multiplicative group $\mathbb{C}^\ast$ and
\begin{equation}\label{E:glcomplex1}
\phi (A) = g(\det A) TA_\sigma T^{-1} , \ \ \ A \in  GL_n (\mathbb{C}),
\end{equation}
or
\item the function  $f : \mathbb{C}^\ast \to  \mathbb{C}^\ast$ defined by $f(\lambda) = g(\lambda)^n \sigma (\lambda)^{-1}$, $\lambda \in \mathbb{C}^\ast$  is an automorphism of the multiplicative group $\mathbb{C}^\ast$ and
\begin{equation}\label{E:glcomplex2}
\phi (A) = g(\det A) T(A_{\sigma}^{-1})^t T^{-1} , \ \ \ A \in  GL_n (\mathbb{C}).
\end{equation}
\end{itemize}
\end{proposition}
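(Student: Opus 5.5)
Both directions will be handled separately, with the \emph{only if} part resting on the classical description of automorphisms of $SL_n(\mathbb{C})$ (Dieudonn\'e, \cite{Die}). That description says every automorphism of $SL_n(\mathbb{C})$, $n\ge 3$, has the form $A\mapsto TA_\sigma T^{-1}$ or $A\mapsto T(A_\sigma^{-1})^tT^{-1}$ for some field automorphism $\sigma$ and some $T\in GL_n(\mathbb{C})$.

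\emph{Sufficiency.} Take $\phi$ of the form (\ref{E:glcomplex1}). Multiplicativity is immediate, since $g$ and $\det$ are multiplicative and $A\mapsto TA_\sigma T^{-1}$ is a homomorphism. For injectivity, suppose $\phi(A)=I$. Then $A_\sigma=g(\det A)^{-1}I$, so $A=\mu I$ is scalar. Taking determinants gives $g(\mu^n)^n\sigma(\mu^n)=1$, i.e.\ $f(\mu^n)=1$. Since $f$ is injective, $\mu^n=1$, so $g(\mu^n)=1$ and $A_\sigma=I$, whence $A=I$.

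For surjectivity, take $B$ and write $B=TC_\sigma T^{-1}$. Choose $\lambda$ with $f(\lambda)=\det C_\sigma$, which exists because $f$ is onto. Then choose a matrix $D$ with $\det D=\lambda$ and $D_\sigma=g(\lambda)^{-1}C_\sigma$; this is consistent because $\sigma(\det D)=g(\lambda)^{-n}\det C_\sigma=g(\lambda)^{-n}f(\lambda)=\sigma(\lambda)$. So $D=\sigma^{-1}(g(\lambda)^{-1})\,C$, and $\det D=\lambda$ holds by the computation just made. The case (\ref{E:glcomplex2}) is identical, with $\sigma(\lambda)^{-1}$ in place of $\sigma(\lambda)$.

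\emph{Necessity.} Let $\phi$ be an automorphism of $GL_n(\mathbb{C})$.
\begin{enumerate}
\item Since $SL_n(\mathbb{C})$ is the commutator subgroup of $GL_n(\mathbb{C})$, it is characteristic, so $\phi$ restricts to an automorphism of $SL_n(\mathbb{C})$.
\item By the $SL_n$ description, after replacing $\phi$ by $\phi$ composed with the inverse of the standard map (which is itself an automorphism of $GL_n(\mathbb{C})$), we may assume $\phi(A)=A$ for all $A\in SL_n(\mathbb{C})$.
\item For $A\in GL_n(\mathbb{C})$ and $S\in SL_n(\mathbb{C})$, normality gives $ASA^{-1}\in SL_n(\mathbb{C})$, so
\[
\phi(A)S\phi(A)^{-1}=\phi(ASA^{-1})=ASA^{-1}.
\]
Hence $A^{-1}\phi(A)$ commutes with all of $SL_n(\mathbb{C})$ and is a scalar $h(A)I$.
\item The map $h$ is a homomorphism $GL_n(\mathbb{C})\to\mathbb{C}^\ast$, because scalars are central. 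It is trivial on $SL_n(\mathbb{C})$, so it factors as $h=g\circ\det$ with $g$ multiplicative.
\end{enumerate}
Undoing the normalization yields (\ref{E:glcomplex1}) or (\ref{E:glcomplex2}). The condition on $f$ then follows from the sufficiency computation: injectivity and surjectivity of $\phi$ restricted to scalar matrices force $f$ to be bijective. For instance, $\phi(\mu I)=g(\mu^n)\sigma(\mu)I$, and comparing determinants relates this to $f$ on $n$-th powers. A cleaner route is to note that $\det\circ\phi=f\circ\det$, up to the harmless conjugation by $\sigma$, and that $\phi$ induces a bijection $GL_n/SL_n\to GL_n/SL_n$.

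The main obstacle is the $SL_n$ automorphism theorem itself, which I would cite from \cite{Die} rather than reprove. If a proof were required, I would characterize involutions (or transvections) group-theoretically via centralizers, show $\phi$ preserves the set of transvections, and then apply the fundamental theorem of projective geometry to obtain $\sigma$ and $T$. The remaining bookkeeping for $f$ under the transpose-inverse case is routine.
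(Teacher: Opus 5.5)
Your argument is correct, but it runs in the opposite logical direction from the paper's.

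The paper cites Dieudonn\'e's classification for $GL_n(\mathbb{C})$ itself \cite[Theorem 1]{Die}. That result already supplies an unspecified multiplicative factor $\chi(A)$. The paper translates the semilinear maps into the matrix form $TA_\sigma T^{-1}$ and then only refines $\chi$: it is trivial on the commutator subgroup $SL_n(\mathbb{C})$, so $\chi=g\circ\det$. Bijectivity of $f$ comes from $\phi$ mapping the center onto itself. This is your first, ``for instance'' route, completed: $\phi(\mu I)=k(\mu)I$ with $k$ an automorphism of $\mathbb{C}^\ast$, and $f(\mu^n)=k(\mu)^n=k(\mu^n)$. Since every element of $\mathbb{C}^\ast$ is an $n$th power, $f=k$.

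You instead take the $SL_n(\mathbb{C})$ classification as input and produce the scalar factor yourself. Once $\phi$ is normalized to be the identity on the characteristic subgroup $SL_n(\mathbb{C})$, $A^{-1}\phi(A)$ centralizes $SL_n(\mathbb{C})$, hence is a scalar $h(A)$. Then $h$ is a multiplicative character trivial on $SL_n(\mathbb{C})$. This explains where $g(\det A)$ comes from, rather than inheriting it from the citation. Your quotient argument for $f$ also works: with the final $g$ (which absorbs $\sigma$) one has $\det\phi(A)=f(\det A)$ exactly. So $f$ is the map induced by $\phi$ on $GL_n(\mathbb{C})/SL_n(\mathbb{C})\cong\mathbb{C}^\ast$, and it is bijective because $\phi$ and $\phi^{-1}$ both preserve $SL_n(\mathbb{C})$. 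This is cleaner than the center argument and needs no change over $\mathbb{R}$ for even $n$, where $\mu\mapsto\mu^n$ is not onto.

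The price is the logical dependence. Within this paper, Proposition~\ref{P:SL} is \emph{deduced} from the present proposition together with \cite[Theorem 5]{Die}. To avoid circularity, you must take the $SL_n(\mathbb{C})$ description from a source that proves it directly, for instance by the transvection and projective-geometry argument you sketch. The same semilinear-to-matrix translation for $\sigma\neq{\rm id}$ is still needed there.

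Your sufficiency computations via $\det\phi(A)=f(\det A)$ (trivial kernel, and surjectivity by rescaling $C$ by $\sigma^{-1}(g(\lambda)^{\mp1})$) are correct. They fill in what the paper calls easy to check.
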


We will call the maps of $GL_n(\mathbb F)$ of the forms \eqref{jao}, \eqref{E:glcomplex1} automorphisms of the first type and the maps of the forms \eqref{jaojao}, \eqref{E:glcomplex2} automorphisms of the second type.

To verify the statements above, we recall that by the general result \cite[Theorem 1]{Die}, every automorphism $\phi$ of the group $GL_n (\mathbb{F})$ is either of the form 
$$
\phi (A) = \chi (A) TA_\sigma T^{-1}
$$ 
or of the form 
$$
\phi (A) = \chi (A) T(A_{\sigma}^{-1})^t T^{-1},
$$ 
where
$\chi : GL_n (\mathbb{F}) \to \mathbb{F}^\ast$ is a multiplicative function, $\sigma : \mathbb{F} \to \mathbb{F}$ is a field automorphism, and $T\in GL_n(\mathbb F)$. However, to be precise, we have to make a remark here. Indeed, one may notice that the formulation given in \cite{Die} is not the same as the one above. Namely, in \cite{Die}, linear and semilinear operators (i.e., maps which are additive and homogeneous relative to a field automorphism) are considered while here we treat matrices. Due to the well-known fact that the only automorphism of the field $\mathbb R$ is the identity, in the real case there is absolutely no problem to translate those results concerning linear operators to results concerning matrices. However, in the complex case some explanation is needed.

First, we recall the precise definition of a semilinear map.  A map $T : \mathbb{C}^n \to
\mathbb{C}^n$ is said to be semilinear if there exists an automorphism $\sigma$ of the field $\mathbb C$ such that $T(x+y) = Tx +Ty$ and $T(\lambda x) = \sigma (\lambda) Tx$ for every pair of vectors  $x,y \in \mathbb{C}^n$ and every $\lambda \in \mathbb{C}$. In this case, we say that $T$ is semilinear relative to $\sigma$.
Assume now that $T$ is a bijective semilinear map relative to $\sigma$. Then the transformation $A \mapsto TAT^{-1}$, $A \in  GL_n (\mathbb{C})$ is an automorphism of $GL_n (\mathbb{C})$ (here the variable $A$ is considered as a linear operator). If we denote by $J : \mathbb{C}^n \to
\mathbb{C}^n$ the map defined by $Jx = x_\sigma$, $x \in \mathbb{C}^n$, then $T = T'J$ for some bijective linear operator  $T' : \mathbb{C}^n \to
\mathbb{C}^n$. The transformation  $A \mapsto TAT^{-1}$ can be rewritten as  $A \mapsto T' (JAJ^{-1})T'^{-1}$. It is easy to verify that the matrix that corresponds to the invertible linear operator $JAJ^{-1}$ is $A_\sigma$. Hence, 
the automorphisms of the first type that appear in
Dieudonn\'e's result in \cite{Die}, i.e., the transformations $\phi : GL_n (\mathbb{C}) \to GL_n (\mathbb{C})$ of the form 
$$
\phi (A) = \chi ( A) TA T^{-1} , \ \ \ A \in  GL_n (\mathbb{C}),
$$
where $T : \mathbb{C}^n \to
\mathbb{C}^n$ is a bijective semilinear map 
and $\chi: GL_n (\mathbb{C})\to \mathbb{C}^\ast$ is a multiplicative function, take the matrix form 
$$
\phi (A) = \chi (A) TA_\sigma T^{-1}, \quad A\in GL_n(\mathbb C),
$$ 
(the matrix $T$ that appears here should not be confused with the bijective semilinear map $T$ that shows up above).
With some more efforts, the automorhisms of the second type in \cite[Theorem 1]{Die} (involving the notion of adjoint maps on dual spaces) can be verified to transform 
to the matrix form 
$$
\phi (A) = \chi (A) T(A_{\sigma}^{-1})^t T^{-1}, \quad A\in GL_n(\mathbb C).
$$ 

To arrive at the forms given in Propositions \ref{P:GLaut} and \ref{P:GLautcomplex}, we need to have a closer look also at the multiplicative function $\chi$.
Let us consider first the automorphisms of the form
$
\phi (A) = \chi (A) TA_\sigma T^{-1}, \quad A\in GL_n(\mathbb F).
$
It is well-known that the derived subgroup (i.e., the commutator subgroup) of $ GL_n (\mathbb{F})$ equals the special linear group $SL_n (\mathbb{F})$, that is, the subgroup of $GL_n (\mathbb{F})$ consisting of all matrices with determinant $1$, see, for example, \cite[Theorem II.9.4]{Sup}. It follows that $\chi (A) = 1$ for every $A\in GL_n(\mathbb F)$ with $\det A = 1$. Consequently, 
we have $\chi (A) = \chi (B)$ whenever $\det A = \det B$, $A,B\in GL_n(\mathbb F)$.
Hence, $\chi : GL_n (\mathbb{F}) \to \mathbb{F}^\ast$ is of the form $\chi(A) = g(\det A)$, $A\in GL_n(\mathbb F)$ for some endomomorphism $g$ of the multiplicative group $\mathbb{F}^\ast$. Every automorphism $\phi$ of  $GL_n (\mathbb{F})$ maps the center $\{\lambda I \, | \, \lambda \in \mathbb{F}^\ast \}$ of  $GL_n (\mathbb{F})$ onto itself. It follows that  the multiplicative function $\lambda \mapsto  g(\lambda)^n \sigma(\lambda)$ is bijective. The same argument can be applied to the automorphisms of the form $\phi (A) = \chi (A) T(A_\sigma^{-1})^t T^{-1}$, $A\in GL_n(\mathbb F)$. This proves the necessity parts of our propositions above. The sufficiency parts are easy to check.

In the real case, we can go further in the analysis of the scalar functions $g$ that appear above. We introduce the following classes of functions
\begin{equation}\label{E:functions}
\begin{aligned}
\Mre=&\{g:\mathbb R^\ast \to \mathbb R^\ast\,  |\, g \text{ is multiplicative, and } f(\lambda) = g(\lambda)^n \lambda, \, \lambda \in \mathbb{R}^\ast \\ &\text{ is an automorphism of } \mathbb{R}^\ast\}, \\
\Mrk=&\{g:\mathbb R^\ast \to \mathbb R^\ast\, |\, g \text{ is multiplicative, and } f(\lambda) = g(\lambda)^n \lambda^{-1}, \, \lambda \in \mathbb{R}^\ast \\ &\text{ is an automorphism of } \mathbb{R}^\ast\}.
\end{aligned}
\end{equation}

We have the following statement which is verified by using well-known ideas. 

\begin{observation}\label{polakur}
Let $g : \mathbb{R}^\ast \to  \mathbb{R}^\ast$ be any function. The following assertions are equivalent.
\begin{itemize}
\item $g\in \Mre$. 
\item  There exists a bijective additive function $a : \mathbb{R} \to \mathbb{R}$ such that 
\begin{equation}\label{E:gm1}
g(\mu) = { 1 \over \sqrt[n]{\mu}} e^{a ( \log \mu) }
\end{equation}
for every real $\mu > 0$, and if $n$ is odd then $g( \mu) =  g(-\mu)$ for every real $\mu < 0$, and if $n$ is even then 
either  $g( \mu) = - g(-\mu)$ for every real $\mu < 0$ or  $g( \mu) = g(-\mu)$ for every real $\mu < 0$.
\end{itemize}
\end{observation}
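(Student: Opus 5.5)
We prove the two implications separately. The positive reals $\mathbb{R}_+$ form a subgroup of $\mathbb{R}^\ast$ isomorphic to $(\mathbb{R},+)$ via $\log$, and $\mathbb{R}^\ast = \{\pm 1\}\times \mathbb{R}_+$. A multiplicative $g$ is therefore determined by its restriction to $\mathbb{R}_+$ and by the value $g(-1)$. Since $g(-1)^2=g(1)=1$, we have $g(-1)=\pm 1$, and $g(-\mu)=g(-1)g(\mu)$ for $\mu>0$. The whole argument reduces to analysing $f(\lambda)=g(\lambda)^n\lambda$ on each of these two factors.

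\emph{From $g\in\Mre$ to the formula.} First we restrict $f$ to $\mathbb{R}_+$. For $\mu>0$ we have $f(\mu)=g(\mu)^n\mu$. When $n$ is even this value is positive. When $n$ is odd we need a separate argument for positivity: $g(\mu)=g(\sqrt\mu)^2>0$, so again $f(\mu)>0$. Thus $f$ maps $\mathbb{R}_+$ into $\mathbb{R}_+$.

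Next we show that $f$ maps $\mathbb{R}_+$ onto $\mathbb{R}_+$, which we expect to be the main point needing care. The automorphism $f$ preserves the torsion subgroup $\{\pm1\}$. It also preserves the set of squares: since $g(\mu)>0$ for $\mu>0$ (shown above for odd $n$, and for even $n$ because $g(\mu)=g(\sqrt\mu)^2$), we get $f(\mathbb{R}_+)\subseteq\mathbb{R}_+$. For surjectivity onto $\mathbb{R}_+$, note that $\mathbb{R}_+$ is exactly the set of squares in $\mathbb{R}^\ast$, and a group automorphism maps the subgroup of squares onto itself. Hence $f|_{\mathbb{R}_+}$ is an automorphism of $\mathbb{R}_+$.

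Transporting by $\log$ gives a bijective additive map
$$a(t)=\log f(e^t) = n\log g(e^t)+t.$$
Solving for $g$ yields $g(\mu)=\mu^{-1/n}e^{a(\log\mu)}$ for $\mu>0$, which is \eqref{E:gm1}.

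It remains to determine $g(-1)$. We have $f(-1)=g(-1)^n(-1)$, and $f(-1)$ must equal $-1$, the unique element of order $2$.
\begin{itemize}
\item If $n$ is odd, this forces $g(-1)^n=1$, so $g(-1)=1$ and $g(-\mu)=g(\mu)$.
\item If $n$ is even, then $g(-1)^n=1$ automatically, so $g(-1)=\pm1$. These two options correspond exactly to the two sign alternatives in the statement.
\end{itemize}

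\emph{From the formula to $g\in\Mre$.} Given a bijective additive $a$, define $g$ on $\mathbb{R}_+$ by \eqref{E:gm1}. This restriction is multiplicative because $a$ is additive. Extend $g$ to negative arguments by $g(-\mu)=\varepsilon g(\mu)$, where $\varepsilon=1$ if $n$ is odd and $\varepsilon\in\{\pm1\}$ if $n$ is even. Since $\varepsilon^2=1$, $g$ is multiplicative on all of $\mathbb{R}^\ast$.

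Now compute $f$ on each factor:
\begin{itemize}
\item for $\mu>0$, $f(\mu)=e^{na(\log\mu)}\cdot\mu^{-1}\cdot\mu=e^{na(\log\mu)}$;
\item $f(-1)=\varepsilon^n(-1)=-1$, because $\varepsilon^n=1$ in both parities.
\end{itemize}
The map $t\mapsto na(t)$ is a bijective additive map, so $f$ is a bijection of $\mathbb{R}_+$ onto itself. Together with $f(-1)=-1$, this shows $f$ is a bijective multiplicative map of $\{\pm1\}\times\mathbb{R}_+$. Hence $f$ is an automorphism and $g\in\Mre$.

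We expect no real obstacle beyond the positivity and surjectivity of $f$ on $\mathbb{R}_+$, which the squares argument above handles.
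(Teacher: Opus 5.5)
Your proof follows the same route as the paper. You show that $f(\lambda)=g(\lambda)^n\lambda$ restricts to an automorphism of $(0,\infty)$, because $(0,\infty)$ is the subgroup of squares. You then transport by $\log$ and use $f(-1)=-1$ to pin down $g(-1)$. That last step settles the odd case explicitly, which the paper only calls trivial.

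There is one algebraic slip in the forward direction. If you set $a(t)=\log f(e^t)=n\log g(e^t)+t$, then solving for $g$ gives $g(\mu)=\mu^{-1/n}e^{a(\log\mu)/n}$, not $\mu^{-1/n}e^{a(\log\mu)}$. You need to define $a(t)=\frac{1}{n}\log f(e^t)$ instead. This is still a bijective additive map, and it matches the paper's choice $a=(1/n)b$. Your converse computation $f(\mu)=e^{na(\log\mu)}$ already uses this corrected normalization, so nothing else needs to change.
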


We start the verification of the above by assuming that $g\in \Mre$. Define $f : \mathbb{R}^\ast \to  \mathbb{R}^\ast$ by $f(\lambda) =  g(\lambda)^n \lambda$, $\lambda \in  \mathbb{R}^\ast$. We know that $f$ is an automorphism of $\mathbb R^\ast$. From $f(\mu^2) = (f (\mu))^2$, $\mu
\in \mathbb{R}^\ast$, we conclude that $f$ maps the interval $(0, \infty)$ into itself. The inverse of $f$ is also an automorphism of $\mathbb{R}^\ast$, and, therefore, $f$ restricted to $(0, \infty)$ is an automorphism of the multiplicative group $(0, \infty)$. It is then clear that the function $b : \mathbb{R} \to \mathbb{R}$
defined by
$$
b(\mu) = \log f (e^\mu), \ \ \ \mu \in \mathbb{R},
$$
is a bijective additive function of $\mathbb R$. Thus,
$$
f(\mu) = e^{b (\log \mu)}, \quad \mu>0.
$$
By the multiplicativity of $g$, it also maps $(0,\infty)$ into itself. Then it follows that 
for every real $\mu > 0$ we have
\begin{equation}\label{hhaass}
g(\mu) = { 1 \over \sqrt[n]{\mu}} e^{a ( \log \mu) },
\end{equation}
where $a : \mathbb{R} \to \mathbb{R}$ is a bijective additive function defined by $a = (1/n)b$.

The multiplicativity of $g$ yields that either $g(-1) = 1$, or $g(-1) = -1$. It is now trivial to complete the proof in this direction and the other direction is an easy computation, hence omitted.

Apparently, one can get a description of similar spirit for the elements of $\Mrk$. There the formula given in \eqref{E:gm1} changes to $g(\mu) = 
{\sqrt[n]{\mu}} e^{a ( \log \mu)}$, $\mu>0$.

We can provide even more information on the functions that belong to $\Mre$. 
Let $g : \mathbb{R}^\ast \to  \mathbb{R}^\ast$ be a function satisfying one, and hence both of the conditions in Observation \ref{polakur}, and let us define the function $f : \mathbb{R}^\ast \to  \mathbb{R}^\ast$ in the same way as in the above proof, i.e., $f(\lambda) =  g(\lambda)^n \lambda$, $\lambda \in  \mathbb{R}^\ast$. Then $f$ maps the set of all nonzero real numbers bijectively onto itself and it also maps the real positive half-line bijectively onto itself. We know that $g$ maps the real positive half-line to itself and the negative half-line either to itself or to the positive half-line. But the restriction of $g$ to the positive half-line is not necessarily a bijective map of the positive half-line onto itself. The first obvious example is the constant function $g(\lambda) = 1$, $\lambda \in \mathbb{R}^\ast$. Actually, there are plenty of examples. 
In the above proof we have used the fact that the multiplicative group of all positive real numbers is isomorphic to the additive group of all real numbers, where the isomorphism and its inverse are the logarithmic function and the exponential function. Thus, the question whether $g$ is a bijection of $(0, \infty)$ onto itself boils down (see (\ref{hhaass})) to the question wheter the additive function
$$
\lambda \mapsto a(\lambda) - {1 \over n}\lambda, \ \ \ \lambda \in \mathbb{R}
$$
is bijective. And this is not true in general. Indeed, if we take a Hamel basis $\{ e_\alpha \, | \, \alpha \in J \}$ of the vector space $\mathbb{R}$ over the field of rational numbers and write $J$ as a disjoint union $J = J_1 \cup J_2$ and then define a bijective additive function $a : \mathbb{R} \to \mathbb{R}$
by
$$
a( e_\alpha ) = e_\alpha
$$
whenever $\alpha \in J_1$ and
$$
a( e_\alpha ) =  {1 \over n} e_\alpha
$$
whenever $\alpha \in J_2$ (we know that $a$ is a linear map on $\mathbb{R}$ considered as a vector space over $\mathbb{Q}$, and hence it is well-defined once we know its values on the chosen Hamel basis), then the null space of the $\mathbb{Q}$-linear function $\lambda \mapsto a(\lambda) - {1 \over n}\lambda$, $\lambda \in \mathbb{R}$ is the linear span of $\{ e_\alpha \, | \, \alpha \in J_2 \}$ and its image is the linear span of $\{ e_\alpha \, | \, \alpha \in J_1 \}$. The previous example $g(\lambda) = 1$, $\lambda \in \mathbb{R}^\ast$ is represented by the special case where $J_1 = \emptyset$.
Apparently, similar consideration can be made concerning the elements of $\Mrk$.

So, we have a kind of good picture about the functions $g$ that appear in the description of the automorphisms of $GL_n(\mathbb R)$ given in Proposition \ref{P:GLaut}. As for the complex case, the situation is much different. First, recall that already the structure of the field automorphisms of $\mathbb C$ is quite complicated. While, as mentioned before, $\mathbb R$ has only one field automorphism, the identity, $\mathbb C$ is known to have $2^\mathfrak{c}$ automorphisms (meaning that there are as many field automorphisms of $\mathbb C$ as arbitrary functions from $\mathbb C$ into itself), so we do not expect to have any useful description of the group automorphisms of $\mathbb C^\ast$, not to mention the functions $g$ that appear in Proposition \ref{P:GLautcomplex}.

The above was about the automorphisms of the general linear group.
Let us now consider the case of the special linear group $SL_n (\mathbb{F})$.
Concerning its automorphisms we recall that they are exactly the restrictions of automorphisms of $GL_n (\mathbb{F})$, see \cite[Theorem 5]{Die}. Therefore, by Propositions \ref{P:GLaut} and \ref{P:GLautcomplex}, we have the precise forms of the automorphisms of $SL_n (\mathbb{F})$.

\begin{proposition}\label{P:SL}
The automorphisms of $SL_n (\mathbb{R})$ are exactly the maps of the forms
\begin{equation}\label{F:SLR}
A\mapsto TAT^{-1}, \qquad A\mapsto T(A^{-1})^t T^{-1}
\end{equation}
where $T\in GL_n (\mathbb{R})$. 

The automorphisms of $SL_n (\mathbb{C})$ are exactly the maps of the forms
\begin{equation}\label{F:SLC}
A\mapsto TA_\sigma T^{-1}, \qquad A\mapsto T(A_\sigma^{-1})^t T^{-1}
\end{equation}
where $T\in GL_n (\mathbb{C})$, and $\sigma$ is a field automorphism of $\mathbb{C}$.
\end{proposition}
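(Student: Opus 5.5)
The plan is to start from the fact recalled just before the statement, \cite[Theorem 5]{Die}: every automorphism of $SL_n(\mathbb{F})$ is the restriction of an automorphism of $GL_n(\mathbb{F})$. Conversely, every automorphism $\Phi$ of $GL_n(\mathbb{F})$ maps the derived subgroup $SL_n(\mathbb{F})$ onto itself, since automorphisms preserve commutator subgroups. Hence $\Phi$ restricts to an automorphism of $SL_n(\mathbb{F})$. So the automorphisms of $SL_n(\mathbb{F})$ are exactly the restrictions of the maps described in Propositions \ref{P:GLaut} and \ref{P:GLautcomplex}, and it only remains to compute these restrictions.

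Take a first-type automorphism $\Phi(A)=g(\det A)\,TA_\sigma T^{-1}$ (with $\sigma=\mathrm{id}$ in the real case) and restrict it to $A$ with $\det A=1$. Since $g$ is multiplicative, $g(1)=1$, so the restriction is $A\mapsto TA_\sigma T^{-1}$. For a second-type automorphism we likewise get $A\mapsto T(A_\sigma^{-1})^tT^{-1}$. Over $\mathbb{R}$ the only field automorphism is the identity, which gives \eqref{F:SLR}; over $\mathbb{C}$ we obtain \eqref{F:SLC}. This shows that every automorphism of $SL_n(\mathbb{F})$ has one of the listed forms.

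For the converse we check directly that each listed map is an automorphism of $SL_n(\mathbb{F})$. The map $A\mapsto A_\sigma$ is multiplicative and bijective on $M_n(\mathbb{F})$, with inverse $A\mapsto A_{\sigma^{-1}}$. It preserves determinants up to $\sigma$, and since $\sigma(1)=1$ it maps $SL_n$ onto $SL_n$. The map $A\mapsto (A^{-1})^t$ is a bijective homomorphism preserving $SL_n$, being the composition of two anti-automorphisms. Conjugation by $T\in GL_n(\mathbb{F})$ also preserves $SL_n$. So each of the listed maps is a composition of automorphisms of $SL_n(\mathbb{F})$.

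Alternatively, one can simply note that $g\equiv 1$ is admissible in Propositions \ref{P:GLaut} and \ref{P:GLautcomplex}: $f(\lambda)=\sigma(\lambda)^{\pm1}$ is then an automorphism of $\mathbb{F}^\ast$. So these maps are restrictions of automorphisms of $GL_n(\mathbb{F})$.

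The only nontrivial ingredient is the extension result \cite[Theorem 5]{Die}, which we are assuming. Everything else is routine bookkeeping, the key point being $g(1)=1$.
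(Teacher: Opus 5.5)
Your proposal is correct and follows the paper's own route. Like the paper, you use Dieudonn\'e's Theorem~5 to identify the automorphisms of $SL_n(\mathbb{F})$ with restrictions of automorphisms of $GL_n(\mathbb{F})$, then read off the forms from Propositions~\ref{P:GLaut} and \ref{P:GLautcomplex} using $g(1)=1$. The only difference is that you spell out the converse direction (invariance of the derived subgroup, or direct verification of the listed maps), which the paper leaves implicit.
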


We next turn to the automorphisms of the unitary group $U_n(\mathbb C)$ of $n\times n$ complex unitary matrices.
We denote by $\Se$ the multiplicative group of all complex numbers of modulus 1, i.e., $\Se = \{ z \in \mathbb{C} \, | \, |z| = 1 \}$. 
The structure of the automorphisms of the group $U_n(\mathbb C)$ can be described as follows.

\begin{proposition}\label{P:autunitary}
The map $\phi$ of the group $U_n (\mathbb{C})$ is an automorphism if and only if there exist $T \in U_n (\mathbb{C})$
and a multiplicative function $g : \Se \to  \Se$ such that the function  $f : \Se \to  \Se$ defined by $f(\lambda) = g(\lambda)^n \lambda $, $\lambda \in \Se$ is an automorphism of the multiplicative group $\Se$ and either we have 
\begin{itemize}
\item
\begin{equation*}\label{F:unitary1}
\phi (A) = g(\det A) TA T^{-1} , \ \ \ A \in  U_n (\mathbb{C}),
\end{equation*}
or we have
\item 
\begin{equation*}\label{F:unitary2}
\phi (A) = g(\det \overline{A}) T\overline{A} T^{-1} , \ \ \ A \in  U_n (\mathbb{C}).
\end{equation*}
\end{itemize}
Here, $\overline{A}$ denotes the matrix obtained from $A$ by applying the complex conjugation entrywise.
\end{proposition}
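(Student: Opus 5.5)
The sufficiency part is a direct verification. If $g$ is multiplicative and $f(\lambda)=g(\lambda)^n\lambda$ is an automorphism of $\Se$, then $\phi(A)=g(\det A)TAT^{-1}$ is clearly a homomorphism of $U_n(\mathbb C)$ into itself, since $T$ is unitary and $|g|=1$.

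For injectivity, suppose $g(\det A)A=I$. Then $A=\mu I$ is scalar with $\mu\in\Se$ and $g(\mu^n)\mu=1$. Raising to the $n$-th power gives $f(\mu^n)=g(\mu^n)^n\mu^n=1$, so $\mu^n=1$ and hence $g(\mu^n)=1$, which yields $\mu=1$.

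For surjectivity, take $B\in U_n(\mathbb C)$ and write $B=\nu B_0$ with $B_0\in SU_n(\mathbb C)$. We look for a preimage of $T^{-1}BT$ of the form $\lambda B_0$. Its image is $g(\lambda^n)\lambda B_0$, so we need $g(\lambda^n)\lambda=\nu$. Choose $\lambda$ with $f(\lambda^n)=\nu^n$, which is possible since $f$ is onto and $\lambda\mapsto\lambda^n$ is onto $\Se$. Then $g(\lambda^n)\lambda=\nu\omega$ for some $n$-th root of unity $\omega$. Replacing $B_0$ by $\omega^{-1}B_0\in SU_n$ absorbs $\omega$, because $\nu B_0=(\nu\omega)(\omega^{-1}B_0)$.

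The second form is this map composed with entrywise conjugation, which is an automorphism of $U_n(\mathbb C)$.

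For necessity, I would not invoke Dieudonné's general theorem. Instead I would use the known structure of $U_n(\mathbb C)$ and the following key facts.

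\emph{Step 1.} The commutator subgroup of $U_n(\mathbb C)$ is $SU_n(\mathbb C)$, and the center of $U_n(\mathbb C)$ is $\Se I$. An automorphism $\phi$ therefore preserves both, and its restriction to $SU_n(\mathbb C)$ is an automorphism of $SU_n(\mathbb C)$.

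\emph{Step 2.} Every automorphism of the compact simple group $SU_n(\mathbb C)$ is continuous. I would quote this from the literature: abstract automorphisms of compact connected semisimple Lie groups are continuous (van der Waerden / Cartan). It follows that the automorphism is either conjugation by some $T\in U_n(\mathbb C)$, or conjugation composed with $A\mapsto\overline A$, since the outer automorphism group of $\mathfrak{su}_n$ is generated by complex conjugation. Composing $\phi$ with the inverse of the corresponding automorphism of $U_n(\mathbb C)$, we may assume that $\phi$ is the identity on $SU_n(\mathbb C)$.

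\emph{Step 3.} For $A\in U_n(\mathbb C)$ and $S\in SU_n(\mathbb C)$, the element $ASA^{-1}$ lies in $SU_n$. Applying $\phi$ gives $\phi(A)S\phi(A)^{-1}=ASA^{-1}$. Hence $A^{-1}\phi(A)$ commutes with $SU_n$, so it is a scalar $\chi(A)\in\Se$, and $\chi$ is a homomorphism $U_n\to\Se$ trivial on $SU_n$. Thus $\chi(A)=g(\det A)$ for a multiplicative $g:\Se\to\Se$. Since $\phi$ maps the center $\Se I$ bijectively onto itself, the map $\lambda\mapsto g(\lambda^n)\lambda$ is an automorphism of $\Se$. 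We then need the condition on $f(\lambda)=g(\lambda)^n\lambda$ itself.

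I expect this last point to be the main obstacle. For $\mu=\lambda^n$, we have $f(\mu)=(g(\lambda^n)\lambda)^n$, so $f$ is the $n$-th power of the central automorphism transported through $\lambda\mapsto\lambda^n$. Bijectivity of $f$ should then follow from the relation between the kernels and images of these two power maps. Concretely, I would show that $f$ is injective using injectivity of $\phi$ on $\Se I$ together with the $n$-th roots of unity, mirroring the surjectivity computation above, and that $f$ is surjective directly.

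The conjugate case is handled identically after replacing $A$ by $\overline A$.
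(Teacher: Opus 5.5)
Your proposal is correct, and it follows a genuinely different route from the paper. First, though, the step you flag as the main obstacle is not one. Since $g$ is multiplicative, $g(\lambda^n)=g(\lambda)^n$, so the central map $\lambda\mapsto g(\lambda^n)\lambda$ is literally $f(\lambda)=g(\lambda)^n\lambda$. You already know that central map is an automorphism of $\Se$, because $\phi(\lambda I)=g(\lambda^n)\lambda I$ and $\phi$ maps $\Se I$ onto itself. Replace your plan via kernels and images of power maps with this identity; as written, that step is never actually carried out. The same identity shortens the sufficiency part. $g(\det A)A=I$ forces $A=\mu I$ with $f(\mu)=1$, hence $\mu=1$. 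For surjectivity you may pick $\lambda$ with $f(\lambda)=\nu$ directly; your detour through $n$-th roots of unity is correct, just unnecessary.

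\medskip

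As for the route, the paper never determines $\phi$ on $SU_n(\mathbb C)$ first. It quotes Johnson's structure theorem, which gives $\phi(A)=\chi(A)TAT^{-1}$ on all of $U_n(\mathbb C)$, with $T$ a bijective orthogonality-preserving semilinear map and $\chi$ multiplicative. A lemma of Rodman and \v Semrl then makes $T$ unitary or antiunitary. The paper gets $\chi\equiv 1$ on $SU_n(\mathbb C)$ from the unitary similarity of the simple symmetries $I-2P$ and Radjavi's theorem that unitaries of determinant $\pm1$ are products of simple symmetries. The centre then yields the condition on $f$ as in the $GL_n$ case.

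You instead first establish Proposition~\ref{P:SU}, which the paper only quotes from the literature, via automatic continuity and $\mathrm{Out}(\mathfrak{su}_n)\cong\mathbb Z/2$. Your commutant argument then produces $\chi$ and its triviality on $SU_n(\mathbb C)$ for free: $A^{-1}\phi(A)$ commutes with $SU_n(\mathbb C)$, whose commutant in $M_n(\mathbb C)$ is $\mathbb C I$.

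Your route avoids semilinear maps and the Radjavi factorization, and it makes transparent that all the pathology sits in the scalar factor $g$. Its cost is a deep analytic input (automatic continuity for compact semisimple Lie groups) plus Lie-algebra classification. The paper instead leans on an existing structure theorem for abstract automorphisms of $U_n(\mathbb C)$ together with elementary facts about orthogonality preservers and simple symmetries. Incidentally, your Step 1 would also shortcut the paper's argument: $\chi|_{SU_n(\mathbb C)}\equiv 1$ follows at once because $SU_n(\mathbb C)$ is the commutator subgroup and $\Se$ is abelian.
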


To verify the "only if" part of the statement, we assume that $\phi$ is an automorphism of $U_n (\mathbb{C})$ and recall that, by the main theorem in \cite{Joh}, there exist an orthogonality preserving bijective semilinear map $T : \mathbb{C}^n \to \mathbb{C}^n$ and a multiplicative map $\chi$ from $U_n (\mathbb{C})$ into $\Se$ such that $\phi (A) = \chi (A) T A T^{-1}$, $A\in U_n (\mathbb{C})$. By the orthogonality preserving property of $T$, it follows from \cite[Lemma 2.1]{RoS} that there exists a constant $c \in \mathbb{C}$ such that either $T$ is linear and $[ Tx, Ty ] = c [ x,y ]$, $x, y \in \mathbb{C}^n$, or $T$ is conjugate-linear and $[ Tx, Ty ] = c [ y,x ]$, $x, y \in \mathbb{C}^n$. Here, $[ \cdot , \cdot]$ denotes the standard inner product on $\mathbb{C}^n$.
Inserting $x=y$, we see that $c > 0$, and after replacing $T$ by $(1 / \sqrt{c})T$, we can assume that $c=1$. Thus, it remains to show that in the linear case we have $\chi (A) = g( \det A)$, $A\in U_n (\mathbb{C})$ for some multiplicative function $g: \Se \to \Se$ such that the function $f: \Se \to \Se$ defined by $f (\lambda ) = g(\lambda)^n \lambda$, $\lambda \in \Se$ is an automorphism of  $\Se$. (The conjugate-linear case can either be treated in the same way or can be reduced to the linear case by composing the original map with the automorphism given by the conjugation $A\mapsto \overline{A}$.) It is enough to verify that $\chi (A) = 1$ for every unitary $A\in U_n(\mathbb C)$ with determinant 1, the rest of the proof goes through as in the case of general linear groups. Let us call any unitary matrix of the form $I - 2P$, where $P$ is a projection (i.e., a Hermitian idempotent) of rank one, a simple symmetry. The simple symmetries are all unitarily similar,  therefore we have $\chi (B) = \chi (C)$ for any pair $B,C$ of simple symmetries. Moreover, we have $B^2 = I$, and therefore, we conclude that either $\chi (B) = 1$ for every simple symmetry $B$, or 
$\chi (B) = -1$ for every simple symmetry $B$. 
By \cite[Theorem 9]{Rad}, every unitary matrix with determinant $\pm 1$ is a product of simple symmetries. Since we are considering $A\in U_n(\mathbb C)$ with $\det A = 1$, the number of factors in this decomposition must be even. It follows that $\chi (A) = 1$, as desired.

The "if" part of the statement in the proposition is easy, we omit it.

As for the functions $g$ appearing in Proposition \ref{P:autunitary}, we tell that, like in the case of $GL_n(\mathbb C)$, we do not have any useful, more detailed description concerning them. We only mention the fact that the group $\Se$ is actually isomorphic to $\mathbb C^\ast$ which was proved in \cite{Clay}.

We will also need the 
structure of the automorphisms of the special unitary group $SU_n(\mathbb C)$.
That group has essentially only one outer automorphism (see, e.g., \cite{Baum} p. 201),
therefore, we have the following description.

\begin{proposition}\label{P:SU}
The automorphisms of $SU_n(\mathbb C)$ are exactly the transformations 
\begin{equation}\label{F:su}
A\mapsto TAT^{-1}, \qquad A\mapsto T\overline{A}T^{-1},
\end{equation}
where $T\in U_n(\mathbb C)$.
\end{proposition}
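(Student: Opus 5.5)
The plan is to derive the proposition from a known description of the automorphism group of $SU_n(\mathbb C)$: every automorphism is continuous, the inner automorphisms exhaust everything except one outer class, and that class is represented by entrywise complex conjugation. The proof has three steps.

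\textbf{Step 1: automatic continuity.} I would use that $SU_n(\mathbb C)$ is a compact, connected, simple Lie group (its center is the finite group of scalar $n$-th roots of unity, and it is perfect). By van der Waerden's theorem, every abstract automorphism of such a group is continuous. One could instead go through Lie algebra techniques, but van der Waerden is the cleanest citation.

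\textbf{Step 2: reduction to Lie algebra automorphisms.} A continuous automorphism is smooth. Its differential is an automorphism of the real Lie algebra $\mathfrak{su}_n$. Since $SU_n(\mathbb C)$ is simply connected, automorphisms of the group and of $\mathfrak{su}_n$ correspond bijectively, so it suffices to classify $\mathrm{Aut}(\mathfrak{su}_n)$.

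\textbf{Step 3: classifying $\mathrm{Aut}(\mathfrak{su}_n)$.} This is where the cited fact enters: the outer automorphism group of $\mathfrak{su}_n$ is $\mathbb Z/2$ for $n\ge 3$, coming from the symmetry of the Dynkin diagram $A_{n-1}$. Complex conjugation $X\mapsto\overline{X}$ preserves $\mathfrak{su}_n$. It is not inner, because on $SU_n(\mathbb C)$ it sends a matrix with eigenvalues $\lambda_i$ to one with eigenvalues $\overline{\lambda_i}$, and for $n\ge 3$ there are elements of $SU_n(\mathbb C)$ whose spectrum is not closed under conjugation. Hence every automorphism is either inner, $A\mapsto TAT^{-1}$ with $T\in SU_n(\mathbb C)\subseteq U_n(\mathbb C)$, or such a map composed with conjugation, which gives $A\mapsto T\overline{A}T^{-1}$.

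\textbf{Converse.} The maps in \eqref{F:su} are clearly automorphisms: for $T$ unitary, $TAT^{-1}$ and $T\overline{A}T^{-1}$ are unitary with determinant $1$, the maps are multiplicative, and they are bijective.

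\textbf{Main obstacle.} The delicate step is the automatic continuity in Step 1. Without it, wild field automorphisms might seem to intervene, as they do for $SL_n(\mathbb C)$. Compactness is exactly what rules them out, and this is the reason the answer here involves only $T$ and possibly conjugation, with no field automorphism $\sigma$.
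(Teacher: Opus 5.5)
Your proof is correct and rests on the same fact the paper uses. The paper's entire justification is a citation: $SU_n(\mathbb C)$ has essentially one outer automorphism (entrywise conjugation), and \eqref{F:su} is read off from that.

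What you add is the reduction that makes this fact apply to \emph{abstract} group automorphisms. The usual Lie-theoretic statement ($\mathrm{Out}(\mathfrak{su}_n)\cong\mathbb Z/2$ from the Dynkin diagram symmetry of $A_{n-1}$) is about continuous automorphisms. You bridge this in three steps:
\begin{itemize}
\item van der Waerden's automatic continuity theorem for compact semisimple Lie groups;
\item smoothness of continuous homomorphisms;
\item the bijection between automorphisms of the simply connected group $SU_n(\mathbb C)$ and automorphisms of $\mathfrak{su}_n$.
\end{itemize}
The paper's one-line citation leaves this step implicit, and it matters. Automatic continuity is exactly what fails for the other groups treated in the paper: for $GL_n(\mathbb F)$, $SL_n(\mathbb C)$ and $U_n(\mathbb C)$, wild field automorphisms $\sigma$ or non-continuous multiplicative functions $g$ appear. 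Your version therefore explains why no such ingredients occur for $SU_n(\mathbb C)$. Compactness together with perfectness (semisimplicity) removes both the scalar factor and any field automorphism.

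The remaining steps are sound:
\begin{itemize}
\item Because $\mathrm{Out}\cong\mathbb Z/2$, any non-inner automorphism, such as $A\mapsto\overline{A}$, represents the nontrivial class.
\item Non-innerness for $n\ge 3$ is witnessed, for instance, by $\mathrm{Diag}(\omega,\omega,\omega,1,\ldots,1)$ with $\omega=e^{2\pi i/3}$. Its spectrum contains $\omega$ but not $\overline{\omega}$.
\item Inner automorphisms by $T\in SU_n(\mathbb C)$ and by $T\in U_n(\mathbb C)$ give the same family of maps.
\end{itemize}
One small point of wording: $SU_n(\mathbb C)$ is not simple as an abstract group, since it has a nontrivial center. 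It is simple only in the Lie-algebra sense, but compact semisimple is all van der Waerden's theorem requires.
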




\section{The local automorphisms of the considered classical groups}

This section contains the main results of the paper. The first two  concern the local automorphisms of the special linear groups $SL_n (\mathbb{R})$, $SL_n (\mathbb{C})$.
Our result on the local automorphisms of $SL_n (\mathbb{R})$ can be formulated very simply.

\begin{theorem}\label{specialreal}
Every local automorphism $\phi : SL_n (\mathbb{R}) \to SL_n (\mathbb{R})$ is an automorphism.
\end{theorem}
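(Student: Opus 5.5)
The plan is to normalize $\phi$ by composing with automorphisms from Proposition~\ref{P:SL} until it fixes a rich enough set of matrices, and then to show that it fixes everything by a trace argument.

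\textbf{Step 1: fixing a reference element and choosing the type.} Fix $A_0=\diag(\lambda_1,\dots,\lambda_n)\in SL_n(\mathbb R)$ with distinct positive eigenvalues, chosen so that the spectrum of $A_0$ is not closed under inversion. Then $A_0$ is not similar to $A_0^{-1}$, which is similar to $(A_0^{-1})^t$. Since $\phi(A_0)$ is the image of $A_0$ under some automorphism, Proposition~\ref{P:SL} shows that $\phi(A_0)$ is similar either to $A_0$ or to $(A_0^{-1})^t$. Composing $\phi$ with $A\mapsto (A^{-1})^t$ if necessary, and then with a suitable inner automorphism, we may assume $\phi(A_0)=A_0$. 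The composed map is still a local automorphism.

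The key consequence is this: whenever $X$ is fixed by $\phi$ and $X$ is not similar to $X^{-1}$, the interpolating automorphism $\phi_{X,B}$ must be inner, for every $B$. Indeed, a second-type automorphism would send $X$ to a matrix similar to $X^{-1}$.

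For $X=A_0$, the conjugating matrix commutes with $A_0$. As $A_0$ has distinct eigenvalues, that matrix is diagonal. Hence $\phi(B)=D_BBD_B^{-1}$ for some diagonal $D_B$, for every $B$.

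\textbf{Step 2: a second fixed element.} Take $A_1\in SL_n(\mathbb R)$ with all entries nonzero and spectrum again not closed under inversion, such that $A_0$ and $A_1$ generate $M_n(\mathbb R)$ as an algebra. Then $\phi(A_1)=DA_1D^{-1}$ for some diagonal $D$. Composing with conjugation by $D^{-1}$, which fixes $A_0$, we get $\phi(A_0)=A_0$ and $\phi(A_1)=A_1$.

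\textbf{Step 3: a large fixed set (main obstacle).} I expect this to be the hardest step. For any $B$, the pair $(B,A_1)$ gives an inner automorphism fixing $A_1$, so
\[
\tr(\phi(B)A_1^m)=\tr(BA_1^m), \qquad m=1,\dots,n-1.
\]
Inserting $\phi(B)=D_BBD_B^{-1}$ turns these into $n-1$ equations in the $n-1$ projective parameters of $D_B$. The aim is to show that for $B$ in a nonempty Zariski-open subset $\mathcal U$ of $SL_n(\mathbb R)$, the only solution is $D_B$ scalar, so that $\phi(B)=B$.

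To do this I would first verify at one explicit $B$, for instance a matrix with all entries nonzero, that the Jacobian of the system at $D=I$ is invertible. Since the entries of $A_1$ are all nonzero, this Jacobian is expected to be generically nonsingular. Then I would rule out non-scalar solutions by using also the pair constraint from $(A_1,B)$: $\phi(B)=SBS^{-1}$ with $S$ a polynomial in $A_1$. If the trace equations turn out to be insufficient, I would enlarge the fixed set iteratively. Every fixed element of $\mathcal U$ that is not similar to its inverse supplies further equations of the same kind for the remaining $B$'s, and these successively cut down the possible $D_B$.

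\textbf{Step 4: conclusion.} Choose finitely many elements $X_1,\dots,X_k\in\mathcal U$, none similar to its inverse, such that $\{X_j^m\}$ spans $M_n(\mathbb R)$. This is possible because $\mathcal U$ is Zariski-dense and the spanning condition is open. For arbitrary $B\in SL_n(\mathbb R)$, each $\phi_{X_j,B}$ is inner with $X_j$ fixed, so
\[
\tr(\phi(B)X_j^m)=\tr(BX_j^m)
\]
for all $j$ and $m$. Since these $X_j^m$ span $M_n(\mathbb R)$, linearity of the trace pairing gives $\tr(\phi(B)Y)=\tr(BY)$ for every $Y\in M_n(\mathbb R)$, hence $\phi(B)=B$.

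Thus the normalized map is the identity, and the original $\phi$ is a composition of automorphisms, hence an automorphism.
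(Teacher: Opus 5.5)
Your Steps 1, 2 and 4 are sound. Once $\phi$ fixes elements $X_1,\dots,X_k$, none similar to its inverse, whose powers span $M_n(\mathbb R)$, every pair $(X_j,B)$ is interpolated by an inner automorphism commuting with $X_j$, and the trace pairing gives $\phi(B)=B$.

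The gap is Step 3, which is supposed to produce these fixed points but contains no argument. After Steps 1--2, all you know about a given $B$ is that $\phi(B)$ lies both in $\{DBD^{-1}: D \mbox{ diagonal}\}$ and in $\{SBS^{-1}: SA_1=A_1S\}$. These are two orbits through $B$ of dimension at most $n-1$ (for nonderogatory $A_1$, which you implicitly assume when you write $S$ as a polynomial in $A_1$). Equivalently, you have $\tr(\phi(B)Y)=\tr(BY)$ only for $Y\in\linspan\{A_0^m\}+\linspan\{A_1^m\}$, a space of dimension at most $2n-1<n^2$. You need these two orbits to meet only in $B$ for a dense set of $B$, and nothing in the proposal shows this.
\begin{itemize}
\item An invertible Jacobian of the trace system at $D=I$ for one $B$ only shows that $D=I$ is an \emph{isolated} solution. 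A square system of Laurent polynomial equations in the ratios $d_i/d_j$ typically has several solutions, so this does not force $D_B$ to be scalar even at that $B$.
\item A single good $B$ would not make the good set Zariski-open or dense. The bad set is the projection of $\{(B,D,S): DBD^{-1}=SBS^{-1},\ D \mbox{ non-scalar}\}$, which need not be closed, since solutions can escape to the boundary of the torus.
\item The fallbacks, ``use the $(A_1,B)$ constraint'' and ``enlarge the fixed set iteratively'', are intentions, not arguments.
\end{itemize}
So the heart of the theorem, that $\phi$ fixes a spanning family, is unproved.

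The idea that bypasses this, used in the paper's short proof, is that no fixed points are needed: it suffices to pair against the \emph{images} of a basis.
\begin{itemize}
\item Take a basis $B_1,\dots,B_{n^2}$ of $M_n(\mathbb R)$ inside $SL_n(\mathbb R)$ consisting of matrices with spectrum $\{(1/2)^{n-1},2\}$. Concretely: the diagonal ones with a single entry $(1/2)^{n-1}$, and $\diag((1/2)^{n-1},2,\dots,2)$ with a single off-diagonal $1$ added. This spectrum is not closed under inversion.
\item After your Step 1 normalization every $\phi(B)$ is similar to $B$. Hence each pair $(A,B_k)$ is interpolated by an inner automorphism, and $\tr(\phi(A)\phi(B_k))=\tr(AB_k)$ for all $A\in SL_n(\mathbb R)$.
\item Taking $A=B_j$ shows that the $\phi(B_k)$ form a basis. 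Then $\sum_k\lambda_kA_k\mapsto\sum_k\lambda_k\phi(A_k)$ is a well-defined linear bijection $\psi$ of $M_n(\mathbb R)$ extending $\phi$.
\item We have $\psi(I)=I$, and $\phi((1/2)^{n-1}P+2(I-P))$ is conjugate to $(1/2)^{n-1}P+2(I-P)$ for every rank-one idempotent $P$. Hence $\psi$ preserves rank-one idempotents, and the Omladi\v{c}--\v{S}emrl theorem gives $\psi(A)=TAT^{-1}$ or $\psi(A)=TA^tT^{-1}$.
\item The transpose form is excluded by pairing a diagonal matrix with distinct entries (not similar to its inverse) with $I+E_{12}$, the identity with an extra $1$ in position $(1,2)$. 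This would force a diagonal $R$ with $R(I+E_{12})R^{-1}=I+E_{21}$, which is impossible.
\end{itemize}
The paper's other proof instead reconstructs $\phi$ on the matrices $(1/2)^{n-1}P+2(I-P)$ via the fundamental theorem of projective geometry. Either way, this linearization is exactly what your Step 3 is missing.
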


Before presenting the proof, for the sake of comparison, let us immediately formulate the corresponding result for $SL_n (\mathbb{C})$. 

\begin{theorem}\label{specialcomplex}
For any map $\phi : SL_n (\mathbb{C}) \to SL_n (\mathbb{C})$ the following assertions are equivalent.
\begin{itemize}
\item $\phi$ is a local automorphism. 
\item  There exist $T \in GL_n (\mathbb{C})$ and a field endomorphism $\sigma$ of $\mathbb C$ such that either
$$
\phi (A) =  TA_\sigma T^{-1} , \ \ \ A \in  SL_n (\mathbb{C}),
$$
or
$$
\phi (A) =  T(A_{\sigma}^{-1})^t T^{-1} , \ \ \ A \in  SL_n (\mathbb{C}).
$$
\end{itemize}
\end{theorem}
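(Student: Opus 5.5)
For the easy direction I will show that every map of the stated form is a local automorphism. Fix $A,B\in SL_n(\mathbb C)$ and a field endomorphism $\sigma$. Let $K\subseteq\mathbb C$ be the subfield generated over $\mathbb Q$ by the entries of $A$ and $B$; it is finitely generated. I claim there is a field automorphism $\tau$ of $\mathbb C$ with $\tau|_K=\sigma|_K$. Write $K$ as a finite algebraic extension of $\mathbb Q(t_1,\dots,t_k)$ with $t_1,\dots,t_k$ algebraically independent. Then $\sigma(t_1),\dots,\sigma(t_k)$ are also algebraically independent, and $\sigma|_K$ is an isomorphism onto $\sigma(K)$. Since $\mathbb C$ has transcendence degree $\mathfrak c$ over both $K$ and $\sigma(K)$, this isomorphism extends to transcendence bases of $\mathbb C$ over $K$ and over $\sigma(K)$. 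It then extends to the algebraic closures, both equal to $\mathbb C$. Hence $\phi(A)=TA_\tau T^{-1}$ and $\phi(B)=TB_\tau T^{-1}$, and Proposition \ref{P:SL} shows that $X\mapsto TX_\tau T^{-1}$ is an automorphism. The second form is handled the same way.

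For the hard direction, let $\phi$ be a local automorphism. I would first extract structural properties of $\phi$ that hold pairwise. Local automorphisms preserve commutation, $\phi(I)=I$, $\phi(-A)=-\phi(A)$, and $\phi(A^k)=\phi(A)^k$ whenever $A$ and $A^k$ are taken as a pair. They also preserve the order of elements and send conjugates to conjugates in a controlled way. The first key step is to pick a suitable generic element $A_0$, for instance a diagonal matrix $D=\mathrm{diag}(d_1,\dots,d_n)$ with algebraically independent $d_i$ and $\prod d_i=1$. Composing with an automorphism $\phi_{D,D}^{-1}$, I may assume $\phi(D)=D$. For any $B$, $\phi_{D,B}(D)=D$. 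Since every automorphism has the form of Proposition \ref{P:SL}, this forces $\phi_{D,B}$ into the form $X\mapsto SX_{\tau}S^{-1}$ or $S(X_\tau^{-1})^tS^{-1}$. Here $\tau$ permutes the $d_i$ (respectively sends them to permuted inverses), and $S$ is a monomial matrix.

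The next step separates the two types. Using a second generic element $E$ (for example a unipotent or a generic matrix whose entries are algebraically independent over $\mathbb Q(d_i)$), I would show by pairing with $D$ and $E$ that a single type is used uniformly. One shows there is a fixed field map $\sigma$ on the field generated by the entries, with $\phi(E)$ determined, and that the monomial part $S$ is the same diagonal up to scalar. After a normalization, $\phi$ fixes $D$ and $E$.

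The core, and the main obstacle, is then to show that $\phi(X)=X_{\sigma_X}$ for each $X$, where $\sigma_X$ is some field automorphism depending on $X$. These must be glued consistently into a single endomorphism $\sigma$ with $\phi(X)=X_\sigma$. My plan is to use pairs $(X,Y)$ together with triangulation arguments through generic elements. Comparing $\phi_{D,X}$, $\phi_{E,X}$ and $\phi_{X,Y}$ pins $S$ to be scalar, so the only freedom left is the field automorphism on the entries. Consistency of the values of $\sigma$ on the entries of $X$ and of $Y$ then comes from $\phi_{X,Y}$, provided a common entry can be forced, for example via $\mathrm{tr}$ and other invariants, or by embedding entries into a third generic matrix. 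The resulting $\sigma$ is only an endomorphism, because local data never give surjectivity. Controlling the monomial/permutation ambiguity at this gluing step is where I expect most of the work.
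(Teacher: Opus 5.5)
Your sufficiency direction is correct and is essentially the paper's Proposition~\ref{endom}. The necessity direction, however, has a genuine gap, and it sits exactly where you expect the work to be. Nothing in your outline turns the pairwise interpolating automorphisms into one $T$ and one endomorphism $\sigma$.

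The local property gives, for each pair, only \emph{some} automorphism agreeing with $\phi$ at those two points. The automorphisms $\phi_{D,X}$, $\phi_{E,X}$ and $\phi_{X,Y}$ are in general unrelated. That the conjugating matrix of $\phi_{D,X}$ is monomial, and that the one of $\phi_{E,X}$ stabilizes $E$ modulo the field action, are statements about different matrices and different field automorphisms. They cannot be combined to make ``$S$ scalar''.

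Your transcendental normalizing element also works against you. Fixing $D$ only forces the field automorphism of $\phi_{D,X}$ to permute the $d_i$. It stays completely free on entries of $X$ that are algebraically independent of them, so $\phi(X)=SX_\tau S^{-1}$ carries almost no information. Worse, there are field automorphisms with $\tau(d_i)=d_i^{-1}$ for all $i$, and then $X\mapsto (X_\tau^{-1})^t$ fixes $D$. So $D$ is fixed by automorphisms of both types and cannot decide the type. Pairing $D$ with one further element $E$ says nothing about the type of $\phi_{D,X}$ for other $X$.

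The idea you are missing is to anchor everything on elements whose spectra are rational, hence fixed by every field automorphism. The paper takes $\mathcal{E}=\{(1/2)^{n-1}P+2(I-P)\}$ with $P$ a rank-one idempotent.

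\begin{itemize}
\item The spectrum alone tells the type, uniformly on $\mathcal{E}$.
\item $\phi$ induces a map $\xi$ on rank-one idempotents. By pairwise arguments only, $\xi$ preserves equality of images, equality of kernels, and (through commutativity preservation) the relation $PQ=QP=0$.
\item This yields maps on $\mathbb{P}(\mathbb{C}^n)$ and its dual that preserve collinearity. The nonsurjective fundamental theorem of projective geometry then delivers the global $T$ and the global endomorphism $\sigma$ at once, with $\phi(A)=TA_\sigma T^{-1}$ on $\mathcal{E}$. The orthogonality relation forces the dual data to be $T^{-1}$ and the same $\sigma$.
\end{itemize}

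The extension to all of $SL_n(\mathbb{C})$ again uses rational anchors.

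\begin{itemize}
\item First it is done for diagonalizable matrices with rational eigenvalues.
\item For arbitrary $C$ and $x$, a conjugate of $C$ is paired with a rational diagonal matrix. The unknown field automorphism of the interpolating automorphism then acts trivially on $e_1,e_2$. This gives $\phi(C)x_\sigma\parallel C_\sigma x_\sigma$, hence $\phi(C)=\lambda C_\sigma$ by Lemma~\ref{L:lindep}.
\item A trace computation against $\mathcal{E}$, using Lemma~\ref{L:GNR}, gives $\lambda=1$.
\end{itemize}

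Some global-reconstruction step of this kind is indispensable, and your proposal does not contain one.
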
 

Observe that above $\sigma$ is not necessarily a field automorphism (cf. the second part of Proposition \ref{P:SL}) only a field endomorphism.

In what follows we present a common proof of both of the previous two results, and later we will give a short argument for the real case. 
In the common proof we will make use of the next four auxiliary statements. 

\begin{lemma}\label{functional}
Let $\sigma$ be an endomorphism of the field $\mathbb{F}$, $\varphi_1 : \mathbb{F}^n \to \mathbb{F}$ a linear functional, and $\varphi_2 : \mathbb{F}^n \to \mathbb{F}$ a semilinear functional relative to $\sigma$. (This latter means that $\varphi_2$ is additive and satisfies $\varphi_2(\lambda x)=\sigma(\lambda)\varphi_2(x)$ for all $x\in \mathbb F^n$ and $\lambda \in \mathbb F$.) Suppose that 
$$
{\rm Ker}\, \varphi_1 = {\rm Ker}\, \varphi_2 .
$$
Then there exists a nonzero constant $c \in \mathbb{F}$ such that $\varphi_2 (x) =  c \sigma (\varphi_1 (x))$ for every $x\in \mathbb{F}^n$. 
\end{lemma}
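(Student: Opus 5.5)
The plan is to compare the two functionals through a single vector $z$ lying outside their common kernel. First I dispose of the degenerate case. If $\varphi_1 = 0$, then ${\rm Ker}\,\varphi_1 = \mathbb{F}^n = {\rm Ker}\,\varphi_2$, so $\varphi_2 = 0$ as well, and any nonzero $c$ works (for instance $c=1$).

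Now assume $\varphi_1 \neq 0$, so that $K = {\rm Ker}\,\varphi_1$ is a linear hyperplane. Choose $z \in \mathbb{F}^n$ with $\varphi_1(z) = 1$. Then $z \notin K = {\rm Ker}\,\varphi_2$, so $c := \varphi_2(z) \neq 0$.

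For an arbitrary $x \in \mathbb{F}^n$, write $x = (x - \varphi_1(x) z) + \varphi_1(x) z$. The first summand lies in $K$, by linearity of $\varphi_1$ and $\varphi_1(z)=1$. Hence it is annihilated by $\varphi_2$. Additivity and $\sigma$-semilinearity of $\varphi_2$ then give
$$
\varphi_2(x) = \varphi_2(\varphi_1(x) z) = \sigma(\varphi_1(x))\,\varphi_2(z) = c\,\sigma(\varphi_1(x)),
$$
which is exactly the claim.

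The argument uses only that $\varphi_2$ is additive and satisfies $\varphi_2(\lambda x) = \sigma(\lambda)\varphi_2(x)$. It never needs $\sigma$ to be surjective, so a field endomorphism suffices, consistent with Theorem~\ref{specialcomplex}. The one point requiring care is the order in which the constants are chosen: normalize $\varphi_1(z) = 1$ first, and only then read off $c$ from $\varphi_2$. I expect no real obstacle beyond that bookkeeping.
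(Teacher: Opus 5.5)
Your proof is correct and follows essentially the same route as the paper: pick $u$ with $\varphi_1(u)=1$, set $c=\varphi_2(u)$, write $x$ as $\varphi_1(x)u$ plus an element of the common kernel, and apply semilinearity. You also make explicit that $c\neq 0$ because $u\notin {\rm Ker}\,\varphi_2$, and that the degenerate case is handled by taking $c=1$; the paper leaves both points implicit.
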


\begin{proof} 
There is nothing to prove when ${\rm Ker}\, \varphi_1 = {\rm Ker}\, \varphi_2  = \mathbb{F}^n$. So, assume that the functionals $\varphi_1,\varphi_2$ are nonzero. Let $u\not\in {\rm Ker}\, \varphi_1$ be a vector such that $\varphi_1 (u) = 1$ and set $c = \varphi_2 (u)$. Then for every $x \in \mathbb{F}^n$ there exist
a unique $\lambda \in \mathbb{F}$ and a unique $v \in {\rm Ker}\, \varphi_1$ such that $x = \lambda u + v$. We have
$$
\varphi_2 (x) = \varphi_2 (\lambda u + v) = \sigma (\lambda) \varphi_2 (u) = c \sigma (\lambda) = c \sigma (\varphi_1 (x)),
$$
as desired.
\end{proof}

\begin{lemma}\label{L:lindep}
Let $\sigma$ be a nonzero endomorphism of the field $\mathbb F$. Assume that $A,B\in GL_n(\mathbb F)$ are such that $Ax_\sigma, Bx_\sigma$ are linearly dependent for all $x\in \mathbb F^n$. (Recall that $x_\sigma=(x_i)_\sigma=(\sigma(x_i))$.) Then $A,B$ are linearly dependent.
\end{lemma}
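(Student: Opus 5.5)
The idea is to reduce to the statement that a single invertible matrix which maps every vector of a suitable spanning set to a multiple of itself must be scalar. Put $C = A^{-1}B \in GL_n(\mathbb F)$. Since $A$ is invertible, linear dependence of $Ax_\sigma$ and $Bx_\sigma$ is equivalent to linear dependence of $x_\sigma$ and $Cx_\sigma$. So the hypothesis becomes: for every $x\in\mathbb F^n$, the vectors $x_\sigma$ and $Cx_\sigma$ are linearly dependent. It then suffices to show $C=\lambda I$ for some $\lambda\in\mathbb F^\ast$, since this gives $B=\lambda A$.

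The only subtlety is that $x_\sigma$ ranges over $\sigma(\mathbb F)^n$ and not over all of $\mathbb F^n$, because $\sigma$ is only an endomorphism. However, $\sigma$ is nonzero, so $\sigma(1)=1$ and $\sigma(0)=0$. Hence every vector with entries in $\{0,1\}$ is of the form $x_\sigma$ (take $x$ to be the same vector), and the test vectors $e_i$ and $e_i+e_j$ are available. For any nonzero $y=x_\sigma$ we have $Cy\neq 0$ because $C$ is invertible, so dependence means $Cy=\mu(y)\,y$ for a unique scalar $\mu(y)\in\mathbb F^\ast$.

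Now apply this to $y=e_i$, which gives $Ce_i=\lambda_i e_i$, so $C$ is diagonal. Next apply it to $y=e_i+e_j$ with $i\neq j$. This gives $\lambda_i e_i+\lambda_j e_j=\mu(e_i+e_j)$, and comparing coordinates (using that $e_i,e_j$ are independent over $\mathbb F$) yields $\lambda_i=\mu=\lambda_j$. Therefore $C=\lambda I$ with $\lambda=\lambda_1\neq 0$, and $B=\lambda A$.

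I do not expect a genuine obstacle here. The one point needing care is the observation above that the standard basis vectors and their pairwise sums lie in the image $\{x_\sigma\}$, so that knowing the hypothesis only on $\sigma(\mathbb F)^n$ is enough; this uses $n\ge 2$, which holds since the paper assumes $n\geq 3$.
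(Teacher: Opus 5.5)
Your proof is correct. It uses the same mechanism as the paper (compare the proportionality scalars at $x$, $u$ and $x+u$), but it restricts that mechanism to the vectors $e_i$ and $e_i+e_j$, which is a real simplification.

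The paper writes $Ax_\sigma=\lambda_x Bx_\sigma$ for every nonzero $x$ and shows that $\lambda_x$ is the same for all $x\in\mathbb{F}^n$; only then does it plug in the standard basis. To compare $\lambda_x$ and $\lambda_u$ for arbitrary independent $x,u$, it needs the auxiliary fact that $x_\sigma,u_\sigma$ stay linearly independent, which the authors call ``not completely trivial''. (It follows by applying the injective ring map $\sigma$ to a nonvanishing $2\times 2$ minor $x_iu_j-x_ju_i$.) It also needs a detour through a third vector $w$ when $x,u$ are dependent.

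You notice that only $e_i$ and $e_i+e_j$ are ever needed, and these are fixed by $\sigma$ because $\sigma(0)=0$ and $\sigma(1)=1$. So the question of how $\sigma$ acts on linear independence never arises, and $\lambda_i=\mu=\lambda_j$ is a one-line coordinate comparison. Passing to $C=A^{-1}B$ is only a cosmetic normalization.

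The paper's longer route does give $Ax_\sigma=\lambda Bx_\sigma$ with a single $\lambda$ for every $x$. That statement follows at once from $A=\lambda B$ anyway, so your shortcut loses nothing and is the cleaner argument.
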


\begin{proof}
For a similar result, see \cite[Theorem 2.3]{BrS}, but observe that our present lemma does not follow from that result directly since $\sigma$ is not assumed to be surjective.

Observe that for any nonzero $x\in \mathbb{F}^n$, we have that $x_\sigma$ is nonzero (any nonzero field endomorphism is injective).
By our assumption, for every nonzero $x\in \mathbb{F}^n$, we have a nonzero scalar $\lambda_x\in \mathbb F$ such that $A x_\sigma = \lambda_x B x_\sigma$. Take linearly independent vectors $x, u \in \mathbb{F}^n$. Then
$$
\lambda_{x+u} B x_\sigma + \lambda_{x+u} B u_\sigma = \lambda_{x+u} B (x + u)_\sigma = A (x+u)_\sigma $$ $$= A x_\sigma + A u_\sigma =  \lambda_x B x_\sigma +  \lambda_u B u_\sigma.
$$
Arguing by contraposition, one can check rather easily (but not completely trivially) that because $x$ and $u$ are linearly independent, the vectors $x_\sigma$ and $u_\sigma$ are also linearly independent. Consequently, $B x_\sigma$ and $B u_\sigma$ are linearly independent and it follows that $\lambda_x = \lambda_u$. In the case where $x$ and $u$ are linearly dependent, we can find $w\in \mathbb F^n$ that is linearly independent of $x$ and $u$, and then we have $\lambda_x = \lambda_w =\lambda_u$. Thus, $\lambda = \lambda_x$ is independent of $x$, and we have
$$
A x_\sigma = \lambda B x_\sigma, \ \ \ x \in \mathbb{F}^n .
$$
Putting the elements of the natural basis of $\mathbb F^n$ into the place of $x$, we conclude that $A=\lambda B$.
\end{proof}

Another observation that we will need is the following.

\begin{lemma}\label{L:GNR}
If $A\in M_n(\mathbb F)$ is not a scalar multiple of the identity, then 
$$
\{[Ax,y] \, | \, [ x,y] =1\}=\mathbb F.
$$
\end{lemma}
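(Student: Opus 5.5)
The plan is to reduce the statement to solving a $2\times 2$ system of (conjugate-)linear equations in $y$. Throughout, $[\cdot,\cdot]$ is the standard inner product on $\mathbb{F}^n$: bilinear when $\mathbb{F}=\mathbb{R}$, and linear in the first and conjugate-linear in the second variable when $\mathbb{F}=\mathbb{C}$.

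The first step is to find a vector $x\in\mathbb{F}^n$ such that $x$ and $Ax$ are linearly independent. This is the standard fact that a matrix for which every vector is an eigenvector is scalar. Suppose instead that $Ax=\lambda_x x$ for every nonzero $x$. For linearly independent $x,u$, comparing
$$A(x+u)=\lambda_{x+u}(x+u) \quad\text{with}\quad \lambda_x x+\lambda_u u$$
gives $\lambda_x=\lambda_u$. If $x,u$ are dependent, we pass through a third vector independent of both, exactly as in the proof of Lemma \ref{L:lindep}; this uses $n\ge 2$. Then $\lambda_x$ is constant and $A$ is a scalar multiple of $I$, contradicting the hypothesis. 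So such an $x$ exists, and we fix it.

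The second step is to consider the map
$$\Psi:\mathbb{F}^n\to\mathbb{F}^2,\qquad \Psi(y)=\bigl([x,y],[Ax,y]\bigr).$$
It is additive and (conjugate-)homogeneous in $y$. We claim it is surjective. Its "kernel" is $\{x,Ax\}^\perp$, which has dimension $n-2$ because $x$ and $Ax$ are independent. Hence the real-linear (respectively conjugate-linear) map $\Psi$ has image of $\mathbb{F}$-dimension $2$. In the complex case, one can apply the linear map $y\mapsto \overline{\Psi(y)}$ instead, whose image is the conjugate of the image of $\Psi$. Equivalently, one can check directly that $y=\alpha x+\beta Ax$ already suffices, because the Gram matrix of $x,Ax$ is invertible.

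The third step finishes the argument. For an arbitrary $c\in\mathbb{F}$, pick $y$ with $\Psi(y)=(1,c)$. Then $[x,y]=1$ and $[Ax,y]=c$, so the set in question is all of $\mathbb{F}$. The reverse inclusion is trivial.

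The only mildly delicate points are the first step, namely handling the dependent pairs $x,u$, and keeping track of conjugate-linearity in the complex case. Using the Gram-matrix formulation with $y\in\operatorname{span}\{x,Ax\}$ avoids the latter entirely: the resulting $2\times2$ system has the invertible Gram matrix of $x,Ax$ (up to conjugation) as its coefficient matrix.
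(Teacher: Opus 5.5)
Your proof is correct and is essentially the paper's argument: both pick $x$ not an eigenvector of $A$ (the paper also points to Lemma \ref{L:lindep} for this) and then find $y$ in the span of $x$ and $Ax$ with $[x,y]=1$ and $[Ax,y]$ arbitrary. The paper does the second step explicitly, taking a unit vector $x$ and $y=x+v$ with $v\in[x]^\perp$ a suitable multiple of the component $z$ of $Ax$ in $[x]^\perp$, so that $[Ax,x+v]=[Ax,x]+[z,v]$; your Gram-matrix (or rank-count) solution of the $2\times 2$ system is the same computation phrased differently.
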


\begin{proof} Indeed, if $A$ is not a scalar multiple of the identity, then there is a unit vector $x\in \mathbb F^n$ which is not an eigenvector of $A$ (cf. Lemma \ref{L:lindep}). Consequently, the  vector $Ax\in \mathbb F^n$ has a nonzero component $z$ in $[x]^\perp$, the orthogonal complement of the subspace $[x]$ spanned by $x$. Pick an arbitrary $v\in [x]^\perp$. Then $[ x,x+v] =1$ and we compute
\[
[Ax, x+v]=[ Ax,x] +[ z,v].
\]
Clearly, the quantity on the right hand side can be any element of the scalar field $\mathbb F$.
\end{proof}

Finally, we will need the following statement about the extendability of nonzero homomorphisms of finitely generated subfields of $\mathbb C$ to field automorphisms of $\mathbb C$.
For any nonempty set $S \subset \mathbb{C}$, we denote by $\mathbb{Q} (S)$ the subfield of $\mathbb{C}$ generated by $S$.

\begin{proposition}\label{endom}
Let $S \subset \mathbb{C}$ be a finite set and $\sigma : \mathbb{Q}(S) \to \mathbb{C}$ a nonzero field homomorphism. Then there exists a field automorphism $\tau : \mathbb{C} \to \mathbb{C}$ such that $\sigma (\lambda) = \tau (\lambda)$ for every $\lambda \in \mathbb{Q}(S)$.
\end{proposition}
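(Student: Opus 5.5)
The plan is the classical argument via transcendence bases and algebraic closures, carried out in two stages.

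\emph{First stage.} Put $K = \mathbb{Q}(S)$ and $L = \sigma(K) \subseteq \mathbb{C}$. A nonzero field homomorphism is injective, so $\sigma : K \to L$ is a field isomorphism. Since $S$ is finite, $K$ is finitely generated over $\mathbb{Q}$. Choose a transcendence basis $\{t_1,\dots,t_k\}$ of $K$ over $\mathbb{Q}$, with $k \le |S|$ finite. Then $\{\sigma(t_1),\dots,\sigma(t_k)\}$ is a transcendence basis of $L$ over $\mathbb{Q}$, because $\sigma$ is an isomorphism fixing $\mathbb{Q}$.

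Next extend both to transcendence bases of $\mathbb{C}$ over $\mathbb{Q}$. Take a transcendence basis $B$ of $\mathbb{C}$ over $K$ and a transcendence basis $B'$ of $\mathbb{C}$ over $L$. Then $\{t_i\}\cup B$ and $\{\sigma(t_i)\}\cup B'$ are transcendence bases of $\mathbb{C}$ over $\mathbb{Q}$.

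The key point is $|B| = |B'|$. Transcendence degree is well defined, so both sets have cardinality $\mathrm{trdeg}_{\mathbb{Q}}\mathbb{C} = \mathfrak{c}$. Removing the finitely many $t_i$, respectively $\sigma(t_i)$, does not change this infinite cardinality, so $|B| = |B'| = \mathfrak{c}$. This is exactly where finiteness of $S$ is used. If $K$ had transcendence degree $\mathfrak{c}$ with a complement of different size, the argument would break, and indeed $\sigma$ need not then extend to an automorphism.

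Fix a bijection $\beta : B \to B'$. The elements of $B$ are algebraically independent over $K$, and those of $B'$ over $L$. Hence $\sigma$ extends uniquely to an isomorphism of rational function fields
$$\sigma_1 : K(B) \to L(B'), \qquad \sigma_1(b) = \beta(b) \ \text{ for } b \in B.$$

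\emph{Second stage.} By the choice of transcendence bases, $\mathbb{C}$ is algebraic over both $K(B)$ and $L(B')$. Since $\mathbb{C}$ is algebraically closed, it is an algebraic closure of each of these fields. By the uniqueness of algebraic closures up to isomorphism over a given isomorphism of base fields (Steinitz; this uses Zorn's lemma), $\sigma_1$ extends to an isomorphism $\tau : \mathbb{C} \to \mathbb{C}$. This $\tau$ is a field automorphism of $\mathbb{C}$, and it agrees with $\sigma$ on $\mathbb{Q}(S)$.

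\emph{Remarks on the steps.} The step needing the most care is the cardinality matching $|B| = |B'|$. The remaining steps are standard facts that I would simply cite:
\begin{itemize}
\item extension of an isomorphism to purely transcendental extensions;
\item extension of an isomorphism to algebraic closures.
\end{itemize}
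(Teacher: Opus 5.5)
Your proposal is correct and follows essentially the same route as the paper. You take transcendence bases of $\mathbb{C}$ over $\mathbb{Q}(S)$ and over $\sigma(\mathbb{Q}(S))$, match their cardinalities (both $\mathfrak{c}$, using finiteness of $S$), extend $\sigma$ across the purely transcendental parts, and then extend to all of $\mathbb{C}$ through the algebraic part. The differences are cosmetic: you justify the cardinality match explicitly via a finite transcendence basis of $\mathbb{Q}(S)$ over $\mathbb{Q}$, and you cite the isomorphism-extension theorem for algebraic closures directly. The paper instead builds an endomorphism by Zorn's lemma and then gets surjectivity from the algebraic closedness of $\tau(\mathbb{C})$, which is just the standard proof of that theorem.
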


\begin{proof}
All the ideas needed to verify this statement can be found in $\cite{Kes}$. We will just sketch the proof. Let $T_1$ be a transcendence basis of $\mathbb{C}$ considered as a field extension of $\mathbb{Q}(S)$. Similarly, let  $T_2$ be a transcendence basis of $\mathbb{C}$ considered as a field extension of $\sigma (\mathbb{Q}(S))$. Because $S$ is finite, both field extensions have transcendence degree continuum and therefore there exists a bijection $\xi : T_1 \to T_2$. In a natural (and unique) way, one can define a field isomorphism from $\mathbb{L} = (\mathbb{Q}(S)) \, (T_1)$ onto $\mathbb{K} =( \sigma (\mathbb{Q}(S))) (T_2)$ (for the details see \cite{Kes}) whose restriction to $\mathbb{Q}(S)$ is $\sigma$ and whose restriction to $T_1$ acts as $\xi$. We use the same symbol $\sigma$ to denote this extension.

It was proved in \cite{Kes} that if $\mathbb{M}$ is a subfield of $\mathbb{C}$, $\rho : \mathbb{M} \to \mathbb{C}$ is a field homomorphism, and $\lambda \in \mathbb{C}$ is algebraic over $\mathbb{M}$, then there exists a field homomorphism $\overline{\rho} : \mathbb{M} ( \lambda) \to \mathbb{C}$ that extends $\rho$. A straightforward application of Zorn's lemma yields the existence of a field endomorphism $\tau : \mathbb{C} \to \mathbb{C}$ such that $\sigma (\lambda) = \tau (\lambda)$ for every $\lambda \in \mathbb{L}$. In particular, $\sigma (\lambda) = \tau (\lambda)$ for every $\lambda \in \mathbb{Q}(S)$ and $\mathbb{K} \subset \tau (\mathbb{C})$. We know that $\tau (\mathbb{C})$ is algebraically closed and $\mathbb{C}$ is algebraic over $\tau (\mathbb{C})$. It follows that $\tau (\mathbb{C}) = \mathbb{C}$, as desired. 
\end{proof}

After these preparations, we are now in a position to give the common proof of our results on the local automorphisms of the special linear groups both in the real and the complex cases.

\begin{proof}[Proof of Theorems \ref{specialreal} and \ref{specialcomplex}]
In what follows, $\Pne$ stands for the set of all $n \times n$ idempotent matrices in $M_n(\mathbb F)$ of rank one.

Let us denote by ${\cal E} \subset  SL_n (\mathbb{F})$ the set of all matrices of the form $(1/2)^{n-1}P + 2(I-P)$, where $P\in \Pne$. Any such $P$ can be written as $P = xy^t$, where
$x,y \in \mathbb{F}^n$ with $y^t x=1$. If $\theta$ is a field automorphism of $\mathbb{F}$, then the map $A \mapsto A_\theta$ from $M_n(\mathbb F)$ into itself is bijective, additive, and multiplicative.
Note that in the real case $\theta$ is the identity function, while in the complex case $\theta$ fixes all rationals. In particular, the matrix
$$
[ (1/2)^{n-1}xy^t + 2(I-xy^t) ]_\theta = 
(1/2)^{n-1}x_\theta y_{\theta}^t + 2(I- x_\theta y_{\theta}^t )
$$ 
has eigenvalues $(1/2)^{n-1}$ and $2$, and
$$
\left([ (1/2)^{n-1}xy^t + 2(I-xy^t) ]_\theta^{-1} \right)^{t} = 
2^{n-1}y_\theta x_{\theta}^t + (1/2) (I- y_\theta x_{\theta}^t)
$$ 
has eigenvalues $2^{n-1}$ and $1/2$.

Let $A,B \in {\cal E}$. By the structure of the automorphisms of $SL_n(\mathbb F)$, see \eqref{F:SLR}, \eqref{F:SLC}, there exist $S \in GL_n (\mathbb{F})$ and a field automorphism $\theta$ of $\mathbb{F}$ such that either
$$
\phi (A) =  SA_\theta S^{-1}  \ \ \  {\rm and} \ \ \  \phi (B) =  SB_\theta S^{-1} ,
$$
or
$$
\phi (A) =  S(A_{\theta}^{-1})^t S^{-1}  \ \ \  {\rm and} \ \ \  \phi (B) =  S(B_{\theta}^{-1})^t S^{-1}.
$$
Hence, either both $\phi (A)$ and $\phi (B)$ have eigenvalues $(1/2)^{n-1}$ and $2$, or 
both $\phi (A)$ and $\phi (B)$ have eigenvalues $2^{n-1}$ and $1/2$. It follows that either for every $A \in {\cal E}$ the matrix 
$\phi (A)$ has eigenvalues $(1/2)^{n-1}$ and $2$, or for every $A \in {\cal E}$ the matrix 
$\phi (A)$ has eigenvalues $2^{n-1}$ and $1/2$. After replacing our local automorphism $\phi$ by the local automorphism $A \mapsto (\phi(A)^{-1})^t$ if necessary, we may assume with no loss of generality that we have the first case.

In particular, $\phi ({\cal E}) \subset {\cal E}$. That is, if $A = (1/2)^{n-1}P + 2(I-P) \in {\cal E}$, then  $\phi (A) = (1/2)^{n-1}Q + 2(I-Q) \in {\cal E}$. Of course, the rank-one idempotent $Q\in \Pne$ is uniquely determined by the rank-one idempotent $P\in \Pne$. Hence, our local automorphism $\phi$ induces a map $\xi : \Pne \to \Pne$ such that 
$$
\phi ( (1/2)^{n-1}P + 2(I-P) ) = (1/2)^{n-1}\xi (P) + 2(I-\xi (P)), \quad P\in \Pne.
$$ 
We introduce some more notation. We identify $\mathbb{F}^n$ with the set of all $n \times 1$ matrices. As before, for a nonzero $x \in \mathbb{F}^n$ we denote by $[x]$ the one-dimensional subspace of $\mathbb{F}^n$ spanned by $x$. The projective space over $\mathbb{F}^n$ will be denoted by $\mathbb{P} (\mathbb{F}^n)$, 
$$
\mathbb{P} (\mathbb{F}^n) = \{ [x] \, | \, x \in \mathbb{F}^n \setminus \{0 \} \}.
$$
Similarly, the projective space over the space of all row matrices of the size $1 \times n$ is defined by
$$
\mathbb{P} ((\mathbb{F}^n)^t) = \{ [y^t] \, | \, y \in \mathbb{F}^n \setminus \{0 \} \}.
$$

We next show that the map $\xi$ gives rise to two maps, one on $\mathbb{P} (\mathbb{F}^n)$ and one on $\mathbb{P} ((\mathbb{F}^n)^t)$.
If we consider $P = xy^t \in \Pne$ as a linear operator, then its image ${\rm Im}\, P$ is $[x]$ and its null space ${\rm Ker}\, P$ is $[y^t]^\perp = \{  u \in \mathbb{F}^n \, | \, y^t u = 0 \}$. Let $P,Q \in \Pne$. 
Since $\phi$ is a local automorphism, there exist $S \in GL_n (\mathbb{F})$ and a field automorphism $\theta$ of $\mathbb{F}$ such that 
$$
\phi ((1/2)^{n-1}P + 2(I-P))  = (1/2)^{n-1}\xi (P) + 2(I-\xi (P))  $$ $$=  S ((1/2)^{n-1}P_\theta + 2(I-P_\theta)) S^{-1}  
$$
and
$$
\phi ((1/2)^{n-1}Q + 2(I-Q  ))=
(1/2)^{n-1}\xi (Q) + 2(I-\xi (Q)) $$ $$ =  S((1/2)^{n-1}Q_\theta + 2(I-Q_\theta)) S^{-1}  .
$$ 
It follows that if $P$ and $Q$ have the same image, that is, $P= xy^t$ and $Q= (\lambda x) u^t$ for some vectors $x,y, u \in  \mathbb{F}^n$ and nonzero scalar $\lambda\in \mathbb F$ with $y^t x = u^t (\lambda x) = 1$, then 
$$
\xi (P) = S P_\theta S^{-1} = (Sx_\theta) (y_{\theta}^t S^{-1}) \quad \text{ and } \quad \xi (Q) = \theta (\lambda) (Sx_\theta) (u_{\theta}^t S^{-1}), 
$$  
so $\xi (P), \xi (Q)$ have the same 
image. Similarly, if $P$ and $Q$ have the same null space, then this is true also for $\xi (P)$ and $\xi (Q)$. 

It follows that the map $\xi :\Pne \to \Pne$ induces two maps
$$
\tau : \mathbb{P} (\mathbb{F}^n) \to \mathbb{P} (\mathbb{F}^n)
\ \ \
{\rm and}
\ \ \
\eta : \mathbb{P} ((\mathbb{F}^n)^t) \to \mathbb{P} ((\mathbb{F}^n)^t)
$$
such that for every $P = xy^t \in \Pne$ we have
$$
{\rm Im}\, \xi (P) = \tau ([x]) \ \ \ {\rm and} \ \ \ {\rm Ker}\, \xi (P) = (\eta ([y^t]))^\perp .
$$

Our next observation is that if for a pair of rank-one idempotents $P,Q \in \Pne$ we have $PQ = QP = 0$, then $\xi (P) \xi (Q) = \xi(Q) \xi (P) = 0$. To see this, first observe that since $\phi$ is a local automorphism, for every pair $A,B \in SL_n (\mathbb{F})$ we have
$$
AB = BA \iff \phi (A) \phi (B) = \phi (B) \phi (A).
$$
Now, if $PQ = QP = 0$ for some $P,Q \in \Pne$ and if $A= (1/2)^{n-1}P + 2(I-P)$ and $B= (1/2)^{n-1}Q + 2(I-Q)$, then $\phi(A)$ and $\phi (B)$ commute,  and therefore, $\xi (P)$ and $\xi (Q)$ commute. It is an elementary linear algebra exercise to show that if two rank-one idempotents commute, then either they are identical or both of their products are zero. If $\xi (P) = \xi (Q)$, then $\phi ((1/2)^{n-1}P + 2(I-P)) = \phi ((1/2)^{n-1}Q + 2(I-Q))$, and therefore, $P=Q$, a contradiction. Hence, $\xi$ has the desired property.

We will next show that for every nonzero vectors $x,u,v \in \mathbb{F}^n$, we have
$$
[x] \subset [u] + [v] \Rightarrow \tau ([x]) \subset \tau ([u]) + \tau ([v]).
$$
There is nothing to prove if $u$ and $v$ are linearly dependent. So, assume they are linearly independent. Then we can find rank-one idempotents $P_1 = uy_{1}^t$, $P_2 = vy_{2}^t$, $P_3 = x_3 y_{3}^t, \ldots, P_n = x_n y_{n}^t$ such that $P_i P_j = 0$ whenever $i \not= j$. Moreover, we can find a rank-one idempotent $Q = x y^t$ such that $QP_j = P_j Q = 0$ for $j=3, \ldots , n$. Denote $\xi (P_j) = R_j$, $j=1, \ldots , n$, and $\xi (Q) = Q'$. Then $R_i R_j = 0$ whenever $i \not= j$ and $Q' R_j = R_j Q' = 0$ for $j=3, \ldots , n$. In particular,
$$
 \tau ([u]) + \tau ([v]) = {\rm Im}\, R_1 \oplus  {\rm Im}\, R_2 = {\rm Ker}\, R_3 \cap \ldots \cap {\rm Ker}\, R_n
$$
and since $R_j Q' = 0$ for $j=3, \ldots , n$, we have $\tau ([x]) = {\rm Im}\, Q' \subset {\rm Ker}\, R_3 \cap \ldots \cap {\rm Ker}\, R_n$. Hence, we have the desired inclusion $ \tau ([x]) \subset \tau ([u]) + \tau ([v])$.

We can apply the nonsurjective version of the fundamental theorem of projective geometry \cite[Theorem 3.1]{Fau} to conclude that there exists a nonzero endomorphism $\sigma$ of the field $\mathbb{F}$ and a matrix $T\in M_n(\mathbb F)$ such that
$$
\tau ([x]) = [ T x_\sigma]
$$
for every nonzero $x \in \mathbb{F}^n$. (We should point out that in the paper \cite{Fau}, a semilinear operator with respect to the endomorphism $\sigma$ appears, while above we have a corresponding matrix $T$. In fact, the $j$th column of $T$ consists of the coordinates of the image of the $j$th element of the standard basis of $\mathbb F^n$ under that semilinear operator.)  The matrix $T$ is invertible because the image of $T$ can be shown to contain $n$ linearly independent vectors. To see this, observe that, as we have seen above, the range of $\xi$ contains $n$ rank-one idempotents (think of $R_j$, $j=1,...,n$) with zero products which implies that the sum of those idempotents is the identity. Consequently, selecting any nonzero vectors in the images of those idempotents, we obtain a linearly independent set of $n$ vectors.

In the same way, we see that $\eta ([y^t]) = [y_{\kappa}^t W]$, $y \in \mathbb{F}^n \setminus \{0\}$, for some nonzero field endomorphism $\kappa$ of $\mathbb{F}$ and some invertible matrix $W\in M_n(\mathbb F)$. We next study the connection between $T$ and $W$, and $\sigma$ and $\kappa$.

We claim that for every pair of nonzero vectors $x \in \mathbb{F}^n$ and $y^t \in ( \mathbb{F}^n)^t$ we have
\begin{equation}\label{motorce}
y^t \, x = 0 \iff y_{\kappa}^t WT x_\sigma = 0.
\end{equation}
Indeed, if $y^t \, x \not= 0$, then there exists a unique rank-one idempotent $P\in \Pne$ whose image is $[x]$ and whose null space is $[y^t]^\perp$. It follows that the image of $\xi(P)$ is  $ [ T x_\sigma]$ and the null space of  $\xi (P)$ is $[y_{\kappa}^t W]^\perp$. The image of a rank-one idempotent is not contained in its null space, and therefore, $ y_{\kappa}^t WT x_\sigma \not= 0$.

If, on the other hand,  $y^t \, x = 0$, then we can easily find rank-one idempotents $P,Q\in \Pne$ such that $PQ = QP = 0$ and the image of $Q$ is $[x]$ while the null space of $P$ is $[y^t]^\perp$. From $\xi (P) \xi (Q) =\xi (Q) \xi (P)= 0$ we infer $ y_{\kappa}^t WT x_\sigma = 0$, as desired.

It follows from (\ref{motorce}) and Lemma \ref{functional} that for every nonzero $y \in \mathbb{F}^n$ there exists a nonzero number $c(y) \in \mathbb{F}$ such that
$$
 y_{\kappa}^t WT x_\sigma = c(y) \sigma ( y^t \, x)
$$
for every $x \in \mathbb{F}^n$. We denote by $e_1 , \ldots , e_n$ the standard basis vectors in $\mathbb{F}^n$. Replacing $y$ by $e_i$ and $x$ by $e_j$, $i\not=j$, $1 \le i,j \le n$ above, we see that $WT$ is a diagonal matrix. Denote its diagonal entries by $d_1 , \ldots, d_n$. Choosing $y= \lambda e_1 + e_2$, $x = \mu e_1 + (1 - \lambda \mu ) e_2$, where $\lambda ,\mu\in \mathbb F$ are any scalars, we arrive at
$$
(d_1 \kappa (\lambda) - d_2 \sigma (\lambda) ) \, \sigma (\mu) + d_2 = c (\lambda e_1 + e_2).
$$
It follows that
$$
d_1 \kappa (\lambda) = d_2 \sigma (\lambda)
$$
for every $\lambda\in \mathbb F$. Hence, $d_1 = d_2$ and $\kappa = \sigma$. Similarly, we see that $d_1 = \ldots = d_n = d$. We can replace $T$ by $(1/d)T$. Hence, $WT$ is the identity, or equivalently, $W$ is the inverse of $T$.

Consequently, for any rank-one idempotent $P = xy^t\in \Pne$, we have 
$$
\xi (P) = \xi (xy^t) = T x_\sigma y_{\sigma}^t T^{-1} = TP_\sigma T^{-1},
$$
and therefore,
$$
\phi (A) = TA_\sigma T^{-1}
$$
for every $A\in {\cal E}$. After replacing $\phi$ by $A\mapsto T^{-1} \phi (A) T$ we may assume with no loss of generality that
$$
\phi (A) = A_\sigma, \quad A\in {\cal E}.
$$
In the rest of the proof we will prove that this latter equality holds on the full set $SL_n(\mathbb F)$. We will do it in several steps.

Let $A$ be an arbitrary element of ${\cal E}$ and $B$ any member of $SL_n(\mathbb F)$. We know that there exist $S \in GL_n (\mathbb{F})$ and a field automorphism $\theta$ of $\mathbb{F}$ such that either
$$
\phi (A) =  SA_\theta S^{-1}  \ \ \  {\rm and} \ \ \  \phi (B) =  SB_\theta S^{-1} ,
$$
or
$$
\phi (A) =  S(A_{\theta}^{-1})^t S^{-1}  \ \ \  {\rm and} \ \ \  \phi (B) =  S(B_{\theta}^{-1})^t S^{-1}.
$$
Since we know that $\phi (A) = A_\sigma$ has eigenvalues $(1/2)^{n-1}$ and $2$, the second possibility cannot occur. 

We claim that if $B$ is any diagonalizable member of $SL_n(\mathbb F)$ and all its eigenvalues are rational numbers, then
\begin{equation}\label{matemate2}
\phi (B) = B_\sigma .
\end{equation}
Let $B$ be such a matrix, that is, $B = MDM^{-1}$, where $M$ is invertible and $D$ is a diagonal matrix with the diagonal entries $s_1 , \ldots , s_n \in \mathbb{Q}$ whose product is 1. Set $J = {\rm Diag}\, ( (1/2)^{n-1} , 2, \ldots, 2)$ and $A = MJM^{-1}$. Then, by the local property of $\phi$ and using what we have learned above, there exist $S \in GL_n (\mathbb{F})$ and an automorphism $\theta$ of $\mathbb{F}$ such that 
$$
M_\sigma J M_{\sigma}^{-1} = A_\sigma = \phi (A) = SA_\theta S^{-1} = SM_\theta J M_{\theta}^{-1} S^{-1}
$$
and
\begin{equation}\label{hgfds}
\phi (B) = S B_\theta S^{-1} = SM_\theta D M_{\theta}^{-1} S^{-1}.
\end{equation}
The first of the above two equalities implies that $M_{\sigma}^{-1} S M_\theta$ commutes with $J$ and therefore 
$M_{\sigma}^{-1} S M_\theta$ is of the form
$$
M_{\sigma}^{-1} S M_\theta
 = \left[  \begin{matrix}  * & 0 & 0 & \ldots & 0 \cr    0 & * & * & \ldots & * \cr   0 & * & * & \ldots & * \cr  \vdots & \vdots & \vdots & \ddots & \vdots \cr   0 & * & * & \ldots & * \cr    \end{matrix}                        \right].
$$
It follows from (\ref{hgfds}) that
$$
M_{\sigma}^{-1} \phi (B) M_\sigma = (M_{\sigma}^{-1} S M_\theta) \, D (M_{\sigma}^{-1} S M_\theta)^{-1}
=   \left[  \begin{matrix}  s_1 & 0 & 0 & \ldots & 0 \cr    0 & * & * & \ldots & * \cr   0 & * & * & \ldots & * \cr  \vdots & \vdots & \vdots & \ddots & \vdots \cr   0 & * & * & \ldots & * \cr    \end{matrix}                        \right].
$$
In the same way (putting $(1/2)^{n-1}$ in the second diagonal position of $J$), we see that
$$
M_{\sigma}^{-1} \phi (B) M_\sigma
=   \left[  \begin{matrix}  * & 0 & * & \ldots & * \cr    0 & s_2 & 0 & \ldots & 0 \cr   * & 0 & * & \ldots & * \cr  \vdots & \vdots & \vdots & \ddots & \vdots \cr   * & 0 & * & \ldots & * \cr    \end{matrix}                        \right]
$$
and we can go further to finally conclude that $M_{\sigma}^{-1} \phi (B) M_\sigma$ is equal to $D$, or equivalently, that (\ref{matemate2}) holds.

We now choose and fix an arbitrary matrix $C \in SL_n (\mathbb{F})$ and aim to show that $\phi(C)$ is a scalar multiple of $C_\sigma$. 
To verify this, we first prove that for every $x \in \mathbb{F}^n$, the vectors
$$
C_\sigma x_\sigma  \ \ \ {\rm and} \ \ \ \phi (C) x_\sigma
$$
are linearly dependent. 
Select an arbitrary nonzero $x \in \mathbb{F}^n$ and denote $y= Cx$. We distinguish two cases, the one where $x$ and $y$ are linearly dependent, and the one where they are linearly independent. Let us begin with the first case, that is, assume that $x,Cx$ are linearly dependent, $Cx=\mu x$ holds for some nonzero scalar $\mu \in \mathbb F$.
Then we can find a matrix $L\in GL_n(\mathbb F)$ such that $Lx = e_1$. We next choose a diagonal matrix $B\in SL_n(\mathbb F)$ with pairwise distinct diagonal entries that belong to $\mathbb{Q}$ such that $B$ is not similar to $B^{-1}$. We define a new map $\psi : SL_n (\mathbb{F}) \to SL_n (\mathbb{F})$ by
$$
\psi (X) = L_\sigma \phi (L^{-1}XL) L_{\sigma}^{-1}, \ \ \ X \in SL_n (\mathbb{F}).
$$
Since the product of local automorphisms is a local automorphism, we see that $\psi$ is a local automorphism. Moreover,
$$
\psi (B) =  L_\sigma \phi (L^{-1}BL) L_{\sigma}^{-1} = L_\sigma (L^{-1}BL)_\sigma L_{\sigma}^{-1}= L_\sigma L_{\sigma}^{-1} BL_\sigma L_{\sigma}^{-1} = B.
$$
Applying the fact that $\psi$ is a local automorphism and that $B$ is not similar to $B^{-1}$, we find  
$S \in GL_n (\mathbb{F})$ and an automorphism $\theta$ of $\mathbb{F}$ such that 
$$
\psi (B)  =  SB_\theta S^{-1}=  SB S^{-1}  \ \ \  {\rm and} \ \ \  \psi (LCL^{-1}) =  S(LCL^{-1})_\theta S^{-1}.
$$
We have $B = SB S^{-1}$. Thus, $S$ commutes with $B$ and therefore $S$ is a diagonal matrix, that is, we have $S e_j = \lambda_j e_j$, $j=1, \ldots , n$, for some scalars $\lambda_1 , \ldots , \lambda_n\in \mathbb F$.
Since $Cx=\mu x$, it follows that $LCL^{-1}e_1=\mu e_1$. Applying $\theta$ on the left and the right side of this equality, we arrive at
$$
(LCL^{-1})_\theta e_1 = \theta(\mu)e_1 .
$$
It follows that
$$
 \psi (LCL^{-1})e_1 = S(LCL^{-1})_\theta S^{-1} e_1 = \lambda_{1}^{-1} S(LCL^{-1})_\theta e_1 =
 \lambda_{1}^{-1} S\theta(\mu)e_1 =  \theta(\mu)e_1,
$$
which yields that $L_\sigma \phi (C) L_{\sigma}^{-1} e_1$ and $e_1$ are linearly dependent. Using $x_\sigma =
 L_{\sigma}^{-1} e_1$, we infer that 
$$
\phi (C) x_\sigma \ \ \ {\rm and} \ \ \ x_\sigma
$$
are linearly dependent. Since $Cx=\mu x$, we have that $x_\sigma$ and $C_\sigma x_\sigma$ are linearly dependent, so it finally follows that $\phi(C)x_\sigma$ and $C_\sigma x_\sigma$ are linearly dependent.

Consider now the case where $x$ and $y=Cx$ are linearly independent. We can easily modify our previous argument to treat that case.
First, we can find a matrix $L\in GL_n(\mathbb F)$ such that $Lx = e_1$ and $Ly= e_2$. We next choose a diagonal matrix $B\in SL_n(\mathbb F)$ as above, i.e., with pairwise distinct diagonal entries that belong to $\mathbb{Q}$ such that $B$ is not similar to $B^{-1}$. We again consider the map $\psi : SL_n (\mathbb{F}) \to SL_n (\mathbb{F})$ defined by
$$
\psi (X) = L_\sigma \phi (L^{-1}XL) L_{\sigma}^{-1}, \ \ \ X \in SL_n (\mathbb{F})
$$
and then find  
$S \in GL_n (\mathbb{F})$ and an automorphism $\theta$ of $\mathbb{F}$ such that 
$$
B=\psi (B)  =  SB_\theta S^{-1}=  SB S^{-1}  \ \ \  {\rm and} \ \ \  \psi (LCL^{-1}) =  S(LCL^{-1})_\theta S^{-1}.
$$
As in the former case, it follows that $S$ commutes with $B$ and hence it is a diagonal matrix, that is, we have $S e_j = \lambda_j e_j$, $j=1, \ldots , n$, for some scalars $\lambda_1 , \ldots , \lambda_n \in \mathbb F$.
Next,
$$
(LCL^{-1}) e_1 = LCx = Ly = e_2
$$
and applying $\theta$ on the left and the right side of this equality, we obtain
$$
(LCL^{-1})_\theta e_1 = e_2 .
$$
It follows that
$$
 \psi (LCL^{-1})e_1 = S(LCL^{-1})_\theta S^{-1} e_1 = \lambda_{1}^{-1} S(LCL^{-1})_\theta e_1 =
 \lambda_{1}^{-1} Se_2 =  \lambda_{1}^{-1} \lambda_2 e_2,
$$
which yields that $L_\sigma \phi (C) L_{\sigma}^{-1} e_1=\psi (LCL^{-1})e_1$ and $e_2$ are linearly dependent. Using $x_\sigma =
 L_{\sigma}^{-1} e_1$ and $y_\sigma =  L_{\sigma}^{-1} e_2$, we infer that 
$$
\phi (C) x_\sigma \ \ \ {\rm and} \ \ \ y_\sigma
$$
are linearly dependent, or equivalently, $\phi (C) x_\sigma$ and $C_\sigma x_\sigma$ are linearly dependent.

After this, we apply Lemma \ref{L:lindep} and obtain our claim that $\phi(C)$ is a scalar multiple of $C_\sigma$ for all $C\in SL_n(\mathbb F)$.

Let now $C\in SL_n(\mathbb{F})$ be any matrix that is not a scalar multiple of the identity and let $\phi(C)=\lambda C_\sigma$ for some scalar $\lambda \in \mathbb F$. Consider any element $A$ of $\mathcal E$, that is, select any rank-one idempotent $P\in \Pne$ and set $A=(1/2)^{n-1}P+2(I-P)$. Then we have an invertible matrix $S\in GL_n(\mathbb F)$ and a field automorphism $\theta$ of $\mathbb F$ (they are both depending on $P$) such that
$$
A_\sigma=\phi(A)=SA_\theta S^{-1}, \quad  \lambda C_\sigma=\phi(C)=SC_\theta S^{-1}.
$$
Multiplying those two equalities and then taking trace, we obtain
\begin{equation}\label{E:trace1}
\lambda \sigma(\tr (AC))=\theta(\tr (AC)).
\end{equation}
But 
$$
\tr (AC)=((1/2)^{n-1}-2)\tr(PC)+2\tr(C).
$$
The trace of $C$ is a given scalar, while,
from Lemma \ref{L:GNR}, we learn that $\tr(PC)=\tr(CP)$ can take any value in $\mathbb F$. So, we can find $P\in \Pne$ such that $\tr(AC)=1$ and then \eqref{E:trace1} implies that $\lambda=1$, i.e., we have $\phi(C)=C_\sigma$.

Finally, assume that $C\in SL_n(\mathbb F)$ is a scalar multiple of the identity, i.e., we have $C=\epsilon I$, and $\phi(C)=\lambda C_\sigma$ for some nonzero scalars $\epsilon,\lambda \in \mathbb F$. Choose any $A\in \mathcal E$. Then $\epsilon A$ is not a multiple of the identity and, moreover, it can be checked that $(\epsilon A)_\sigma$ is not similar to $((\epsilon A)_\theta^{-1})^t$ for any automorphism $\theta$ of $\mathbb F$. Then there is $S\in GL_n(\mathbb F)$ and a field automorphism $\theta$ of $\mathbb F$ such that
$$
\sigma(\epsilon)A=(\epsilon A)_\sigma=\phi(\epsilon A)=S(\epsilon A)_\theta S^{-1}=\theta(\epsilon)SA S^{-1}
$$
and
$$
\lambda \sigma(\epsilon)I= \lambda (\epsilon I)_\sigma=\phi(C)=S(\epsilon I)_\theta S^{-1}=\theta(\epsilon) I.
$$
Taking trace in the first equality, we have 
$$
\sigma(\epsilon)((1/2)^{n-1}+2(n-1))=\theta(\epsilon)
((1/2)^{n-1}+2(n-1))
$$ 
which implies $\sigma(\epsilon)=\theta(\epsilon)$. Taking trace in the second equality, we have $\lambda \sigma(\epsilon)=\theta(\epsilon)$ and hence we obtain that $\lambda=1$. Therefore, we have $\phi(C)=C_\sigma$ for all $C\in SL_n(\mathbb F)$.

After this, since in the real case $\sigma$ is necessarily the identity, the proof of Theorem \ref{specialreal} is complete.
As for the complex case, the necessity part of the proof of Theorem \ref{specialcomplex} is also done. Clearly, the sufficiency easily follows from Proposition \ref{endom}. 
\end{proof}

One can observe that the condition $n\geq 3$ was really needed in the proof above. Namely, we used it where we employed a version of the fundamental theorem of projective geometry and also where we selected diagonalizable matrices which are not similar to their inverses.

So, above we presented a common proof of the statements concerning the structures of the local automorphisms of the real and complex special linear groups. In the real case a much shorter argument can be given whose idea can also be employed in the proof of  Theorem \ref{generalreal}
on the local automorphisms of $GL_n(\mathbb R)$.

\begin{proof}[Separate short proof of Theorem \ref{specialreal}]
Consider the set $\mathcal B$ of all $n\times n$ real matrices that we list as follows. We consider the diagonal matrices with the entries $(1/2)^{n-1}$ in one place and 2 in the remaining $n-1$ places. Besides them, we also consider all matrices with the entries $(1/2)^{n-1}$ in the upper left corner, 2 in the other places of the diagonal and 1 in exactly one place off the diagonal and 0 elsewhere. The set $\mathcal B$ has $n^2$ elements all belonging to $SL_n(\mathbb R)$ and they can be easily seen to form a basis in $M_n(\mathbb R)$. We order those elements into the sequence $B_k$, $k=1,...,n^2$.

Let $\phi$ be a local automorphism of $SL_n(\mathbb R)$. We can see that either $\phi$ sends any of the elements of $\mathcal B$ to a similar matrix or any of the elements of $\mathcal B$ to a matrix that is similar to its transposed inverse. Clearly, we can assume without serious loss of generality that we have the former case (see the first part of the common proof of Theorems \ref{specialreal} and \ref{specialcomplex} above). It is then apparent that for any $A\in SL_n(\mathbb R)$ and $B\in \mathcal B$, we have an $S\in GL_n(\mathbb R)$ such that $\phi(A)=SAS^{-1}$, $\phi(B)=SBS^{-1}$ implying that $\tr(\phi(A)\phi(B))=\tr (AB)$.

We show that the elements $\phi(B_k)$, $k=1,...,n^2$ are linearly independent and therefore they form a basis in $M_n(\mathbb R)$. Assume that
$\sum_k \lambda_k \phi(B_k)=0$. We compute
$$
\begin{gathered}
0=\tr ((\sum_k \lambda_k \phi(B_k))\phi(B_j))=
\sum_k \lambda_k \tr (\phi(B_k)\phi(B_j)) \\
=\sum_k \lambda_k \tr (B_kB_j)=
\tr ((\sum_k \lambda_k B_k) B_j).
\end{gathered}
$$
Since this holds for all $B_j\in \mathcal B$ which form a basis in $M_n(\mathbb R)$, we have that $\tr ((\sum_k \lambda_k B_k)T)=0$ is valid for all $T\in M_n(\mathbb R)$. This implies that $\sum_k \lambda_k B_k=0$, so all $\lambda_k$-s are 0 and our claim follows.

After this, for any $\lambda_k\in \mathbb R$ and $A_k\in SL_n(\mathbb R)$, $k=1,...,m$ ($m$ is an arbitrary positive integer), we define
$$
\psi (\sum_k \lambda_k A_k) = \sum_k  \lambda_k \phi(A_k).
$$
We show that $\psi$ is well-defined. To see this, it is sufficient to check that $\sum_k \lambda_k A_k=0$ implies $\sum_k  \lambda_k \phi(A_k)=0$. Pick any $B_j\in \mathcal B$.
We have that
$$
\begin{gathered}
0=\tr ((\sum_k \lambda_k A_k)B_j)=\sum_k \lambda_k\tr(A_kB_j)\\
\sum_k \lambda_k\tr(\phi(A_k)\phi(B_j))
=\tr ((\sum_k \lambda_k\phi(A_k))\phi(B_j))
.
\end{gathered}
$$
Since the elements $\phi(B_j)$ form a basis in $M_n(\mathbb R)$, it follows as above that $\sum_k \lambda_k \phi(A_k) =0$. 

So, $\psi$ is well-defined and then we can infer that it is actually a bijective linear transformation on $M_n(\mathbb R)$ which extends $\phi$.

Since (by the local property of $\phi$), $\phi(I)=I$, we have $\psi(I)=I$. We know that $\phi(B_k)$ is similar to $B_k$, therefore, for any rank-one idempotent $P\in I_n^1(\mathbb R)$, pairing $(1/2)^{n-1}P+2(I-P)$ with any $B_k$,  there is $S\in GL_n(\mathbb R)$ such that
$$
\begin{gathered}
\psi((1/2)^{n-1}P+2(I-P))=\phi((1/2)^{n-1}P+2(I-P))\\=S((1/2)^{n-1}P+2(I-P))S^{-1}.
\end{gathered}
$$
Hence, by the linearity of $\psi$, we obtain that $\psi(P)=SPS^{-1}$.
Consequently, $\psi$ sends rank-one idempotents to rank-one idempotents.
By the main theorem in \cite{OmlSem}, we have the structure of all bijective linear transformations on $M_n(\mathbb R)$ which preserve rank-one idempotents. Namely, we have that there is $T\in GL_n(\mathbb R)$ such that $\psi$ is either of the form
$$
\psi(A)=TAT^{-1}, \quad A\in M_n(\mathbb R)
$$
or of the form 
$$
\psi(A)=TA^t T^{-1}, \quad A\in M_n(\mathbb R)
$$
and the same description applies for the restriction of $\psi$ onto $SL_n(\mathbb R)$, that is, for $\phi$. 

It remains to rule out the appearance of the second form. This can be done as follows. Assume on the contrary that we have $\psi(A)=TA^t T^{-1}$, $A \in M_n(\mathbb R)$ for a given $T\in GL_n(\mathbb R)$.
By the local property of $\phi$,
we know that any $\phi(A)$ is similar to $A$ (see above).
Consider the matrix
$$
A= \left[  \begin{matrix}  1 & 1 & 0 & \ldots & 0 \cr    0 & 1 & 0 & \ldots & 0 \cr   0 & 0 & 1 & \ldots & 0 \cr  \vdots & \vdots & \vdots & \ddots & \vdots \cr   0 & 0 & 0 & \ldots & 1 \cr    \end{matrix}                        \right]
$$
and a diagonal matrix $D\in SL_n(\mathbb R)$ with pairwise different diagonal entries which is not similar to its inverse. 
Then we have an $S\in GL_n(\mathbb R)$ such that
$$
TA^tT^{-1}=\phi(A)=SAS^{-1}, \quad TDT^{-1}=\phi(D)=SDS^{-1}.
$$
From the second equality we deduce that $T^{-1}S$ is diagonal and obtain that $A^t=T^{-1}S A(T^{-1}S)^{-1}$. A simple computation leads to a contradiction.

Therefore, we necessarily have 
$$
\phi(A)=TAT^{-1}, \quad A\in SL_n(\mathbb R)
$$
and the proof is complete.
\end{proof}

In the next result, we give a precise description of the local automorphisms of $GL_n(\mathbb R)$. We first introduce the following collections of functions.
Recall that in \eqref{E:functions} we defined $\Mre$ and $\Mrk$ as the sets of all functions $g:\mathbb R^\ast \to \mathbb R^\ast$ that appear in the descriptions \eqref{jao}, \eqref{jaojao} of the automorphisms of $GL_n(\mathbb R)$.

We now set
$$
\LMre=\{ f:\mathbb R^\ast \to \mathbb R^\ast\, |\, \forall t,s\in \mathbb R^\ast\, : \exists g\in \Mre : f(t)=g(t), f(s)=g(s)\},
$$
$$
\LMrk=\{ f:\mathbb R^\ast \to \mathbb R^\ast  \, |\, \forall t,s\in \mathbb R^\ast\, : \exists g\in \Mrk :\, f(t)=g(t), f(s)=g(s)\}.
$$
Hence, the elements of $\LMre$ are the functions on $\mathbb R^\ast$ which can be interpolated at any two points in $\mathbb R^\ast$ by elements of $\Mre$. Clearly, similar holds concerning $\LMrk$ and $\Mrk$.

After this, the description of the local automorphisms of $GL_n (\mathbb{R})$ is as follows.

\begin{theorem}\label{generalreal}
For a map $\phi : GL_n (\mathbb{R}) \to GL_n (\mathbb{R})$ the following assertions are equivalent.
\begin{itemize}
\item $\phi$ is a local automorphism. 
\item  Either there exist $T \in GL_n (\mathbb{R})$ and a function $f\in \LMre$ such that
$$
\phi (A) =   f(\det A) TA T^{-1} , \ \ \ A \in  GL_n (\mathbb{R}),
$$
or there exist $T \in GL_n (\mathbb{R})$ and a function $f\in \LMrk$ such that
$$
\phi (A) = f(\det A)  T(A^{-1})^t T^{-1} , \ \ \ A \in  GL_n (\mathbb{R}).
$$
\end{itemize}
\end{theorem}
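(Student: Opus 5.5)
The sufficiency direction is direct. Suppose $\phi(A)=f(\det A)TAT^{-1}$ with $f\in\LMre$, and fix $A,B\in GL_n(\mathbb R)$. By definition of $\LMre$ there is $g\in\Mre$ with $f(\det A)=g(\det A)$ and $f(\det B)=g(\det B)$. Then the automorphism $X\mapsto g(\det X)TXT^{-1}$ from Proposition \ref{P:GLaut} interpolates $\phi$ at $A$ and $B$. The second form is handled identically with $\Mrk$.

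For necessity, I restrict $\phi$ to $SL_n(\mathbb R)$, but only after factoring out scalars. For $A\in SL_n(\mathbb R)$ and any pair of points, the interpolating automorphism has the form $g(\det\cdot)TAT^{-1}$ or $g(\det\cdot)T(A^{-1})^tT^{-1}$. Hence on $SL_n(\mathbb R)$ we have $\phi(A)=g(1)\cdot(\ldots)=(\ldots)$, since $g(1)=1$. So $\phi$ maps $SL_n(\mathbb R)$ into itself, and the restriction is a local automorphism of $SL_n(\mathbb R)$. Indeed, the restrictions of automorphisms of $GL_n(\mathbb R)$ to $SL_n(\mathbb R)$ are automorphisms of $SL_n(\mathbb R)$ by Proposition \ref{P:SL}. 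Theorem \ref{specialreal} then gives $T$ with either $\phi(A)=TAT^{-1}$ on $SL_n(\mathbb R)$ or $\phi(A)=T(A^{-1})^tT^{-1}$ there. Composing with $X\mapsto T^{-1}XT$, and in the second case also with $X\mapsto (X^{-1})^t$, both of which are automorphisms, I may assume $\phi(A)=A$ on $SL_n(\mathbb R)$.

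Next I take an arbitrary $C\in GL_n(\mathbb R)$ and compare it with elements of $SL_n(\mathbb R)$. Mimicking the common proof, I pair $C$ with $B=L^{-1}DL$, where $D$ is a rational diagonal element of $SL_n(\mathbb R)$ with distinct entries that is not similar to $D^{-1}$, so $\phi(B)=B$. The interpolating automorphism must then be of the first type, because $B$ is not similar to $(B^{-1})^t$; here $g(1)=1$ again. This yields an $S$ commuting with $LBL^{-1}$-type diagonals, i.e.\ $S$ is diagonal after conjugation by $L$. Exactly as in the proof of Theorems \ref{specialreal}--\ref{specialcomplex}, this gives that $\phi(C)x$ and $Cx$ are linearly dependent for all $x$, with $\sigma=\mathrm{id}$. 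Lemma \ref{L:lindep} then gives $\phi(C)=\lambda(C)C$.

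The key step is showing that $\lambda(C)$ depends only on $\det C$. For this I use the interpolation of $C,C'$ with $\det C=\det C'$ by an automorphism $g(\det\cdot)SXS^{-1}$, which forces $\lambda(C)=\lambda(C')$. The type is fixed to the first one by the same non-self-dual trick, comparing eigenvalues, or more simply by pairing with elements of $\mathcal E$ as in the trace argument. I expect the main obstacle to be getting this consistently, i.e.\ that an automorphism interpolating two elements of equal determinant gives the same scalar. From $\lambda C=g(d)SCS^{-1}$ and $\lambda' C'=g(d)SC'S^{-1}$ this is immediate for the scalar $g(d)$ only if I know $\lambda=g(d)$. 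I would get that by taking traces or determinants. Take $C=\sqrt[n]{|d|}$-scaled matrices: compare $C$ with $C\cdot A$ for $A\in\mathcal E$ and use a trace identity as in \eqref{E:trace1} together with Lemma \ref{L:GNR}.

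Once $\lambda(C)=f(\det C)$, the pair condition shows $f\in\LMre$. For any $t,s$ pick $C,C'$ of those determinants that are not similar to their transposed inverses, e.g.\ generic diagonals. The interpolating automorphism must then be of the first type with some $g$, and comparing spectra gives $f(t)=g(t)$ and $f(s)=g(s)$, after checking that $SCS^{-1}=C$ up to the scalar, via determinants: $f(t)^n t=g(t)^n t$ only up to sign. For the sign ambiguity when $n$ is even I would compare traces of $C$ chosen with nonzero trace.
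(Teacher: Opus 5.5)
Your first two steps are sound, and the second is a real shortcut compared with the paper. Pair an arbitrary $C\in GL_n(\mathbb R)$ with $L^{-1}DL$. Since $g(1)=1$ and $D$ is not similar to $D^{-1}$, the interpolating automorphism must be of the first type. The eigenvector argument from the common proof of Theorems \ref{specialreal} and \ref{specialcomplex}, together with Lemma \ref{L:lindep}, then gives $\phi(C)=\lambda(C)C$ on all of $GL_n(\mathbb R)$ at once. The paper instead treats $\mathbb R^\ast SL_n(\mathbb R)$ with an auxiliary matrix $B$, and $SL_n^-(\mathbb R)$ with a linear extension and the rank-one idempotent preserver theorem. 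Your route avoids that machinery.

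The gap is the step you flag yourself: you never show that $\lambda(C)$ depends only on $\det C$. Your claim that pairing $C,C'$ with $\det C=\det C'=d$ ``forces $\lambda(C)=\lambda(C')$'' is false as stated. Determinants give only $\lambda(C)^n=\lambda(C')^n$, in either type. This settles odd $n$, but for even $n$ it leaves $\lambda(C)=\pm\lambda(C')$.

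Traces remove the sign only if the interpolating automorphism is of the first type and $\tr C,\tr C'\neq 0$. If $\tr C=0$ and $C$ is similar to $-C$, the sign stays undetermined. If both $C$ and $C'$ are similar to scalar multiples of their transposed inverses, you cannot force the first type at all.

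Your proposed remedy does not help. The identity \eqref{E:trace1} with $A\in\mathcal E$ and Lemma \ref{L:GNR} only yields $\lambda(C)=g(d)$ for some $g\in\Mre$ that depends on the pair $(C,A)$, so it never compares $\lambda(C)$ with $\lambda(C')$. Moreover, for even $n$ every scalar multiple of an element of $\mathcal E$ has positive determinant, so these cannot serve as references when $d<0$. Relative to such a reference the ambiguity is exactly the sign $g(-1)=\pm1$, which is what the paper's case distinction (a)/(b) on $SL_n^-(\mathbb R)$ handles.

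You can close the gap within your framework with references of determinant exactly $d$. Let $\epsilon$ be the sign of $d$, and for $P\in\Pne$ put $C_P=|d|^{1/n}\bigl(\epsilon 2^{1-n}P+2(I-P)\bigr)$. These have determinant $d$ and nonzero trace, and (since $n\ge 3$) are not similar to any scalar multiple of their transposed inverses.

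\begin{itemize}
\item Pairing anything with $C_P$ forces the first type, with $g(d)=\lambda(C_P)$ and $SC_PS^{-1}=C_P$.
\item Pairing $C_P$ with $C_Q$ shows that $\lambda(C_P)=:\mu(d)$ does not depend on $P$.
\item Pairing an arbitrary $C$ of determinant $d$ with $C_P$ and taking the trace of the product gives $(\lambda(C)-\mu(d))\tr(C_PC)=0$. Lemma \ref{L:GNR} lets you choose $P$ with $\tr(C_PC)\neq 0$; if $C$ is scalar, use $\tr C_P\neq 0$ instead.
\end{itemize}

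If by ``compare $C$ with $C\cdot A$'' you meant pairing $C$ with $CA$, that also works. The interpolating $\Phi$ satisfies $\Phi(A)=\Phi(C)^{-1}\Phi(CA)=\frac{\lambda(CA)}{\lambda(C)}A$, and spectra (not traces) force the ratio to be $1$. But then you also need the fact that $\mathcal E$ generates $SL_n(\mathbb R)$.

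In your last step, ``not similar to their transposed inverses'' must be strengthened to ``not similar to any scalar multiple of their transposed inverses''.
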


\begin{proof}
Let $\phi : GL_n (\mathbb{R}) \to GL_n (\mathbb{R})$ be a local automorphism. Then, by Proposition \ref{P:GLaut}, we clearly have that
$$
\phi ( SL_n (\mathbb{R})) \subset  SL_n (\mathbb{R}) 
$$
and the restriction of $\phi$ to $SL_n (\mathbb{R})$, considered as a map from $SL_n (\mathbb{R})$ to itslef, is a local automorphism. By  
Theorem \ref{specialreal}, there exists a $T\in GL_n(\mathbb R)$ such that either
$\phi (A) =   TA T^{-1}$ for every $A \in  SL_n (\mathbb{R})$, or $\phi (A) =  T(A^{-1})^t T^{-1}$ for every $A \in  SL_n (\mathbb{R})$.
After replacing $\phi$ by $A \mapsto T^{-1}\phi (A) T$ in the first case, and by 
$A \mapsto T^{-1}\phi ((A^{-1})^t) T$ in the second case, we may assume with no loss of generality that $\phi (A) = A$  for every $A \in  SL_n (\mathbb{R})$. The rest of the proof of this part is to show that then $\phi$ is the identity on the whole set $GL_n (\mathbb{R})$.

In the first step, we show that there is a function $f:\mathbb R^\ast \to \mathbb R^\ast$ such that $\phi(\lambda A)=f(\lambda )A$ holds for all $\lambda \in \mathbb R^\ast$ and $A\in SL_n(\mathbb R)$.
To verify this, first observe that for each $\lambda\in \mathbb R^\ast$ and each $A \in  SL_n (\mathbb{R})$, there exist a matrix $S\in GL_n (\mathbb{R})$ and a multiplicative function $g : \mathbb{R}^\ast \to  \mathbb{R}^\ast$ (with certain additional properties) such that either 
$$
A =  \phi (A) = g(\det A) SA S^{-1}  = SA S^{-1} 
$$
and
$$
\phi (\lambda A) = g(\det (\lambda A)) S (\lambda A) S^{-1} = g(\lambda)^n \lambda SA S^{-1}= g(\lambda)^n \lambda A,
$$
or
$$
A =  \phi (A) = g(\det A) S(A^{-1})^t S^{-1} = S(A^{-1})^t S^{-1}
$$
and
$$
\phi (\lambda A) = g(\det (\lambda A))  S((\lambda A)^{-1})^t S^{-1} = g(\lambda)^n \lambda^{-1} S(A^{-1})^t S^{-1}=g(\lambda)^n \lambda^{-1} A.
$$
Hence, for every $A \in  SL_n (\mathbb{R})$, there exists a function $h_A :  \mathbb{R}^\ast \to  \mathbb{R}^\ast$ such that
$$
\phi (\lambda A) = h_A (\lambda ) A, \ \ \ \lambda \in \mathbb{R}^\ast.
$$

We show that $h_A = h_I = h$ is independent of $A$. 
Select $A\in SL_n(\mathbb R)$ and assume first that $A$ is not similar  to $-A$ and also $(A^{-1})^t$ is not similar to $-A$. We know that for 
each $\lambda\in \mathbb R^\ast$ there exist  $S\in GL_n(\mathbb R)$ 
and a multiplicative function $g : \mathbb{R}^\ast \to  \mathbb{R}^\ast$ such that either 
$$
h_I (\lambda )I =  \phi (\lambda I) = g(\det (\lambda I )) S (\lambda I) S^{-1}  = g(\lambda)^n \lambda I
$$
and
$$
h_A (\lambda ) A = \phi (\lambda A) = g(\det (\lambda A)) S (\lambda A) S^{-1} = g(\lambda)^n \lambda SA S^{-1},
$$
or
$$
h_I (\lambda )I =  \phi (\lambda I) = g(\det (\lambda I )) S ((\lambda I)^{-1})^t S^{-1}  = g(\lambda)^n \lambda^{-1} I
$$
and
$$
h_A (\lambda ) A =\phi (\lambda A) = g(\det (\lambda A))  S((\lambda A)^{-1})^t S^{-1} = g(\lambda)^n \lambda^{-1} S(A^{-1})^t S^{-1}.
$$
In both cases, we claim that our assumption on $A$ yields that $h_A (\lambda) = h_I (\lambda)$. Indeed, in the first case we have $h_A (\lambda ) A=h_I(\lambda) SA S^{-1}$. Taking determinants, we have that $h_A (\lambda )=\pm h_I(\lambda)$ but the negative sign here is excluded due to the assumption that $A$ is not similar to $-A$. The second case can be treated in the same manner. So, in our particular case we have $h_A=h_I$. (The reason for taking the determinant above and not trace is that the trace of the considered matrix $A$ may be zero.)

Let now $A \in  SL_n (\mathbb{R})$ be any matrix. We assert that we can find $B \in  SL_n (\mathbb{R})$ such that  $B$ is not similar  to $-B$, $(B^{-1})^t$ is not similar to $B$, $(B^{-1})^t$ is not similar to $-B$, and  for every invertible matrix $S$ which commutes with $B$ we have $SAS^{-1} \not= -A$. 
How to find such a matrix $B$? The first three conditions can easily be satisfied, any elements of $\mathcal E$ (see the common proof of Theorems \ref{specialreal}, \ref{specialcomplex}) would be appropriate. The fourth condition is automatically fulfilled if $\tr (A)\neq 0$. Indeed, taking traces in $SAS^{-1}= -A$, we would have $\tr(A)=0$. Assume now that $\tr A=0$. Since $A\neq 0$, we can select a rank-one idempotent $P\in \Pne$ such that $\tr (PA)\neq 0$. Consider $B=(1/2)^{n-1}P+2(I-P)$. If there exists $S\in GL_n(\mathbb R)$ such that $SB=BS$ and $SAS^{-1}=-A$, then it follows that
$S(BA)S^{-1}=-BA$. Taking trace, we have 
$\tr (BA)=0$ from which, using $\tr A=0$ and the form of $B$, we conclude that $\tr(PA)=0$, a contradiction. So, for any $S\in GL_n(\mathbb R)$, if $SB=BS$, then we necessarily have that $SAS^{-1}\neq -A$. This means that a required element $B \in  SL_n (\mathbb{R})$ can really be found. We know that $h_B=h_I$.

Now, select any number $\lambda \in \mathbb{R}^\ast$. Then there exist $S\in GL_n(\mathbb R)$ and a multiplicative function $g : \mathbb{R}^\ast \to  \mathbb{R}^\ast$ such that   
$$
h_I (\lambda )B = h_B (\lambda )B =\phi (\lambda B) = g(\det (\lambda B )) S (\lambda B) S^{-1}  = g(\lambda)^n \lambda S B S^{-1}
$$
and
$$
h_A (\lambda ) A = \phi (\lambda A) = g(\det (\lambda A)) S (\lambda A) S^{-1} = g(\lambda)^n \lambda SA S^{-1}
$$
(the possibility where transposed inverses appear cannot show up because of the choice of $B$).
By the properties of $B$, from the first equality we necessarily obtain that $SBS^{-1}=B$.
From the second equality, it follows that $SAS^{-1} = \pm A$ but, as $SB=BS$, we have $SAS^{-1} \not= -A$, so we conclude that $h_A (\lambda) =g(\lambda)^n \lambda= h_I (\lambda)$, as desired.
Thus, denoting $h_I$ by $h$, we have proved that 
\begin{equation}\label{E:egyf}
\phi (\lambda A) = h (\lambda ) A, \ \ \ \lambda \in \mathbb{R}^\ast, A \in  SL_n (\mathbb{R}).
\end{equation}

We next study how $\phi$ behaves on the set $SL_n^-(\mathbb R)$ of all elements of $GL_n(\mathbb R)$ with determinant $-1$. In the short proof of Theorem \ref{specialreal} above we considered a basis $\mathcal B$ of $M_n(\mathbb R)$ consisting of certain elements of $SL_n(\mathbb R)$. In all those matrices, alter the element $(1/2)^{n-1}$ to its negative. We denote the so obtained set by $\mathcal B'$ and its elements by $B'_k$, $k=1,...,n^2$. One can check that $\mathcal B'$ is also a basis in $M_n(\mathbb R)$ which consists of elements of $SL_n^-(\mathbb R)$. 

Select any $N\in SL_n^-(\mathbb R)$. Let us pair $N$ with any $B_k$. Using the local property of $\phi$ and the fact that $\phi$ is the identity on $SL_n(\mathbb R)$, we see that $\phi(N)$ is similar to $N$ or to $-N$ (this latter possibility may appear only in the case where $n$ is even since, for a multiplicative function $g$ that appears in the forms of automorphisms of $GL_n(\mathbb R)$, by the equality $g(-1)^n(-1)=-1$ (the automorphisms of $\mathbb R^\ast$ sends $-1$ to $-1$), we can have $g(-1)=-1$ only if $n$ is even). Observe that here $N$ is an arbitrary element of $SL_n^-(\mathbb R)$, so, in particular, for each $B_k'$, $\phi(B_k')$ is similar 
to $B_k'$ or to $-B_k'$.

Now, pair $N$ with any $B_k'$. We see that there is $c\in\{-1,1\}$ such that
\begin{equation}\label{E:M3}
\phi(N)=cSN S^{-1}, \quad \phi(B_k')=cSB_k' S^{-1}
\end{equation}
hold for some $S\in GL_n(\mathbb R)$
(the possibility involving transposed inverses cannot occur because, as we know, $\phi(B_k')$ is similar either to $B_k'$ or to $-B_k'$). Multiplying the equalities in \eqref{E:M3} and taking trace, we obtain that
\begin{equation}\label{E:M2}
\tr (\phi(N)\phi(B_k'))=\tr (NB_k').
\end{equation}
After this, just as in the short proof of Theorem \ref{specialreal}, one can check that the elements $\phi(B'_j)$, $j=1,...,n^2$ form a basis in $M_n(\mathbb R)$ and the transformation
$$
\psi: \sum_k \lambda_k N_k \longmapsto \sum_k\lambda_k \phi(N_k), \quad \lambda_k\in \mathbb R, \, N_k\in SL^-_n(\mathbb R)
$$
is a well-defined bijective linear map on $M_n(\mathbb R)$.
By \eqref{E:M2}, $\psi$ preserves the trace of products.

Pairing the elements $B_k', B_j'$, by the local property of $\phi$, we see that either we have 
$$\tr (\phi(B_k'))=\tr (B_k'), \quad 
\tr (\phi(B_j'))=\tr (B_j'),$$ or we have $$\tr (\phi(B_k'))=-\tr (B_k'), \quad 
\tr (\phi(B_j'))=-\tr (B_j').$$ 
It follows immediately that either 

(a) we have $\tr (\phi(B_j'))=\tr (B_j')$ for all $j=1,...,n^2$, 

\noindent
or 

(b) we have $\tr (\phi(B_j'))=-\tr (B_j')$ for all $j=1,...,n^2$. 

\noindent
Suppose that we have the latter case (b). By \eqref{E:M3}, for any $N\in SL_n^-(\mathbb R)$, $\phi(N)$ is similar to $-N$. Moreover, by the definition of $\psi$, we deduce that $-\psi$ preserves the trace.
We compute 
$$
\tr (\psi(A)\psi(I))=\tr (AI)=\tr (A)=-\tr (\psi(A))=\tr (\psi(A)(-I)), \quad A\in M_n(\mathbb R)
$$
which, by the surjectivity of $\psi$
implies that $\psi(I)=-I$. 

Now, choosing any rank-one idempotent $P\in \Pne$, we deduce that for some $S\in GL_n(\mathbb R)$ we have
$$
\begin{gathered}
\psi( -(1/2)^{n-1}P+2(I-P))=\phi( -(1/2)^{n-1}P+2(I-P))\\=-S(-(1/2)^{n-1}P+2(I-P))S^{-1}.
\end{gathered}
$$ 
By the linearity of $\psi$, we easily obtain from this equality that $\psi(P)=-SPS^{-1}$. Therefore, the linear bijection $-\psi$ preserves rank-one idempotents. In the same way as in the short proof of Theorem \ref{specialreal}, we get to the following:
there is a $T\in GL_n(\mathbb R)$
such that
$\phi$ is either of the form
$$
\phi(N)=-TNT^{-1}, \quad N\in SL_n^-(\mathbb R)
$$
or it is of the form 
$$
\phi(N)=-TN^t T^{-1}, \quad N\in SL_n^-(\mathbb R).
$$
We can rule out the second possibility in a way very similar to what we followed in the short proof of Theorem \ref{specialreal}. Let
$$
A= \left[  \begin{matrix}  -1 & 1 & 0 & \ldots & 0 \cr    0 & 1 & 0 & \ldots & 0 \cr   0 & 0 & 1 & \ldots & 0 \cr  \vdots & \vdots & \vdots & \ddots & \vdots \cr   0 & 0 & 0 & \ldots & 1 \cr    \end{matrix}                        \right]
$$
and pick a diagonal $D\in SL_n^-(\mathbb R)$ whose diagonal entries are pairwise different and which is not similar to $\pm D^{-1}$. Pairing $A$ and $D$, by the local property of $\phi$, we have $S\in GL_n(\mathbb R)$ such that
$$
-TDT^{-1}=\phi(D)=-SDS^{-1}, \quad
-TA^tT^{-1}=\phi(A)=-SAS^{-1}.
$$
It follows that for $R=T^{-1}S$ we both have
$D=RDR^{-1}$ and $A^t=RAR^{-1}$. But it is untenable. Indeed, by the first equality, $R$ is diagonal and one can trivially compute that with such a matrix the equality $A^t=RAR^{-1}$ cannot hold.
Therefore, we have that $\phi(N)=-TNT^{-1}$ for all $N\in SL_n^-(\mathbb R)$.

Going further, we observe that, since for every $N\in SL_n^-(\mathbb R)$ the determinant of $N^2$ is 1 and $\phi$ respects the square operation (which immediately follows from the local property of $\phi$), we  compute
$$ 
TN^2T^{-1}=\phi(N)^2=\phi(N^2)=N^2.
$$ 
As this holds for all $N\in SL_n^-(\mathbb R)$, we easily conclude that $T$ is scalar multiple of the identity and hence we have
$\phi(N)=-N$ for all $N\in SL_n^-(\mathbb R)$.

Pairing any $N\in SL_n^-(\mathbb R)$ with $\lambda N$ for any $\lambda>0$, we have two options. First, there is $S\in GL_n(\mathbb R)$ and $g\in \Mre$ such that
$$
-N=\phi(N)=g(-1)SNS^{-1}, \quad \phi(\lambda N)=g(-\lambda ^n)\lambda SNS^{-1}
$$
from which we can deduce that $\phi(\lambda N)=-\alpha(\lambda)N$ for some automorphism $\alpha$ of $\mathbb R^\ast$, or, second, there is $S\in GL_n(\mathbb R)$ and $g\in \Mrk$ such that
$$
-N=\phi(N)=g(-1)S{(N^{-1})}^t S^{-1}, \quad \phi(\lambda N)=g(-\lambda^n)\lambda^{-1} S{(N^{-1})}^tS^{-1}
$$
from which we again obtain that $\phi(\lambda N)=-\alpha(\lambda )N$ holds for some automorphism $\alpha$ of $\mathbb R^\ast$. Notice that $\alpha(\lambda)$ is necessarily a positive number.

Finally, since
$$
h(\lambda)^2I=\phi(\lambda I)^2=\phi(\lambda^2 I)=h(\lambda^2)I
$$
(in particular, $h$ is positive on the positive real numbers), we have
$$
\alpha(\lambda)^2 N^2=\phi(\lambda N)^2=\phi(\lambda^2 N^2)=h(\lambda^2)N^2=h(\lambda)^2N^2.
$$
This implies that $h(\lambda)=\alpha(\lambda)$ and,
therefore, we have 
$$
\phi(\lambda N)=-h(\lambda )N, \quad \lambda>0, N\in SL_n^-(\mathbb R). 
$$

In a similar fashion, if we have the case (a) above, i.e., if $\tr (\phi(B_j'))=\tr (B_j')$ for all $j=1,...,n^2$, then we obtain that $\phi(\lambda N)=h(\lambda )N$ holds for all positive $\lambda$ and $N\in SL_n^-(\mathbb R)$. 

After this, we can now describe the action of $\phi$ on the full set $GL_n(\mathbb R)$ as follows.
Any $B\in GL_n(\mathbb R)$ can be written as
$$
B= \sqrt[n]{|\det B|} \biggl(\frac{1}{\sqrt[n]{|\det B|}} B\biggr). 
$$ 
In the case (a), we can see that
$$
\phi(B)=h(\sqrt[n]{|\det B|})\biggl(\frac{1}{\sqrt[n]{|\det B|}} B\biggr)
$$
holds
independently of whether $\det B$ is positive or negative.
In the case (b) the situation is different. For $\det B>0$, we have
$$
\phi(B)=h(\sqrt[n]{|\det B|})\biggl(\frac{1}{\sqrt[n]{|\det B|}} B\biggr)
$$
while if $\det B<0$, then we have
$$
\phi(B)=-h(\sqrt[n]{|\det B|})\biggl(\frac{1}{\sqrt[n]{|\det B|}} B\biggr).
$$
However, 
the common feature of those two cases is that we have a function $f:\mathbb R^\ast \to \mathbb R^\ast$ such that $\phi(B)=f(\det B)B$ holds for all $B\in  GL_n(\mathbb R)$. It only remains to study this function $f$.

Pick arbitrary nonzero real numbers $\lambda,\mu$. Let $p=\sqrt[n]{|\lambda|}$, $q=\sqrt[n]{|\mu|}$. Choose (for example, diagonal) matrices $A\in SL_n(\mathbb R)$ and $N\in SL_n^-(\mathbb R)$ with nonzero traces such that no real multiple of $A$ is similar to the transposed inverse of $A$ and no real multiple of $N$ is similar to the transposed inverse of $N$.
In the case where $\lambda,\mu$ are both positive, we pair $pA$ and $qA$. If $\lambda,\mu$ are both negative, we pair $pN$ and $qN$. If $\lambda$ is positive and $\mu$ is negative, we pair $pA$ and $qN$. For example, in this latter case there are $S\in GL_n(\mathbb R)$ and $g\in \Mre$ such that
$$
f(p^n)pA=\phi(pA)=g(p^n)pSAS^{-1},\quad
f(-q^n)qN=\phi(qN)=g(-q^n)qSNS^{-1}.
$$ 
Taking traces, we have $f(\lambda)=g(\lambda)$, $f(\mu)=g(\mu)$.
This way we obtain that $f\in \LMre$. The more difficult part of the equivalence statement in the theorem is hence proved. By Proposition \ref{P:GLaut}, the converse statement is just trivial.

Let us make the final comment that in the case where $n$ is odd, actually we do not need to examine the local automorphisms of $GL_n(\mathbb R)$ on $SL_n^-(\mathbb R)$.
Indeed, in that case, $\mathbb R SL_n(\mathbb R)=GL_n(\mathbb R)$ holds, for any $B\in GL_n(\mathbb R)$ we have
$$
B=\sqrt[n]{\det B}\biggl(\frac{1}{\sqrt[n]{\det B}} B\biggr)
$$
so, by \eqref{E:egyf}, we deduce
$$
\phi(B)=h(\sqrt[n]{\det B})\biggl(\frac{1}{\sqrt[n]{\det B}} B\biggr).
$$
After this, defining $f(\lambda)=h(\sqrt[n]{\lambda})/ \sqrt[n]{\lambda}$, $\lambda \in \mathbb R^\ast$, we get
$$
\phi(B)=f(\det B)B,  \quad B\in GL_n(\mathbb R).
$$
The fact that $f\in \LMre$ can also be easily shown (recall the previous paragraph).
\end{proof}

Above we obtained the complete description of the local automorphisms of $GL_n(\mathbb R)$. It is natural to ask if some more information can be given about the real functions that appear in the formulation of the theorem, i.e., about the elements of $\LMre$ and $\LMrk$. The answer is affirmative. Let us consider only the case of $\LMre$.

First, we characterize the local automorphisms of the group $\mathbb  R^\ast$.
To do that, we
define an equivalence relation on the positive real half-line. For any $\lambda, \mu\in (0,\infty)$, we write $\lambda \sim \mu$ if and only if there exists a nonzero rational number $q$ such that $\lambda = \mu^q$. Let $k : (0, \infty) \to  (0, \infty)$ be any function satisfying the following condition. 

\begin{itemize}
\item[(P)]
We have $k(1)=1$ and 
for every pair of positive real numbers $\lambda , \mu$ we have $\lambda \sim \mu \iff k(\lambda) \sim k (\mu)$, and if $\lambda = \mu^q$ for some $q \in \mathbb{Q}^\ast = \mathbb{Q} \setminus \{ 0 \}$ then $k(\lambda) = k(\mu)^q$. 
\end{itemize}

It is easy to describe all functions having the property (P). Indeed, let $\Gamma$ be the set of equivalence classes of $(0,\infty)$ with respect to $\sim$ and $\varphi : \Gamma \to \Gamma$ an injective function satisfying $\varphi ( \{ 1 \} ) = \{ 1 \}$. For every $\gamma \in \Gamma$, $\gamma \not= \{ 1\}$, choose representatives $\lambda_\gamma \in \gamma$ and $\mu_\gamma \in \varphi (\gamma)$. We define $k : (0, \infty) \to (0, \infty)$ in the following way. We first set $k(1)=1$. Next, for each positive real number $\lambda \not=1$ there exists a unique $\gamma\in \Gamma$ such that $\lambda \in \gamma$. By the definition of the equivalence relation $\sim$, there is a unique $q \in \mathbb{Q}$ such that $\lambda = \lambda_{\gamma}^q$. We set 
$$
k( \lambda) =\mu_{\gamma}^q.
$$
It is easy to verify that $k$ has the property (P) and that each function having that property can be obtained in the above way. 

Let us say that a function $h : \mathbb{R}^\ast \to  \mathbb{R}^\ast$ has property (LAR) if it satisfies the following
\begin{itemize}
\item[(LAR)]
$h$ maps $(0,\infty)$ to itself, the restriction of $h$ to $(0, \infty)$
has property (P) and $h(-\lambda) = - h(\lambda)$ holds for every nonzero real $\lambda$. 
\end{itemize}

The next statement tells that the local automorphisms of $\mathbb R^\ast$ are exactly the functions on $\mathbb R^\ast$ that have property (LAR).

\begin{proposition}\label{scareal}
Let $h : \mathbb{R}^\ast \to  \mathbb{R}^\ast$ be a function. Then the following statements are equivalent.
\begin{itemize}
\item The function $h$ is a local automorphism of $\mathbb R^\ast$,
\item The function $h$ has property (LAR).
\end{itemize}
\end{proposition}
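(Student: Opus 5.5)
The plan is to reduce everything to the additive example from the Introduction. First I describe the automorphisms of $\mathbb R^\ast$. Since $\mathbb R^\ast=\{\pm1\}\times(0,\infty)$ and $(0,\infty)$ is exactly the set of squares, every automorphism $\alpha$ maps $(0,\infty)$ onto itself. It also fixes $-1$, the unique element of order $2$. Hence $\alpha(-\lambda)=-\alpha(\lambda)$, and $\alpha|_{(0,\infty)}$ is an automorphism of $(0,\infty)$. Via $\log$ and $\exp$, such an automorphism is $\lambda\mapsto e^{b(\log\lambda)}$ with $b$ a bijective $\mathbb Q$-linear map of $\mathbb R$. Conversely, any such $b$ gives the automorphism $\alpha(\pm\lambda)=\pm e^{b(\log\lambda)}$. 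Under this logarithmic correspondence, $\lambda\sim\mu$ means exactly that $\log\lambda,\log\mu$ are nonzero and $\mathbb Q$-linearly dependent, and $\lambda=\mu^q$ means $\log\lambda=q\log\mu$.

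For the direction that a local automorphism has (LAR), fix $\lambda,\mu$. Interpolating at the single pair $\lambda,-\lambda$ gives $h(-\lambda)=\alpha(-1)\alpha(\lambda)=-h(\lambda)$ and $h(\lambda)>0$ for $\lambda>0$. Interpolating at $1$ gives $h(1)=1$. For positive $\lambda,\mu$, interpolate at $\lambda,\mu$ by $\alpha$. If $\lambda=\mu^q$, then $h(\lambda)=\alpha(\mu)^q=h(\mu)^q$. Since $\alpha$ is bijective, it satisfies $\alpha(\lambda)=\alpha(\mu)^q \iff \lambda=\mu^q$. This gives both directions of $\lambda\sim\mu\iff h(\lambda)\sim h(\mu)$, using $h(\lambda)\neq1$ when $\lambda\neq1$, which also follows from injectivity.

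For the converse, let $h$ have (LAR) and fix $\lambda,\mu\in\mathbb R^\ast$. Write $\lambda=\epsilon|\lambda|$ and $\mu=\delta|\mu|$. It suffices to find an automorphism $\alpha$ of $(0,\infty)$ with $\alpha(|\lambda|)=h(|\lambda|)$ and $\alpha(|\mu|)=h(|\mu|)$. Extending it oddly then yields an automorphism of $\mathbb R^\ast$ agreeing with $h$ at $\lambda,\mu$. Pass to logarithms: $x=\log|\lambda|$, $y=\log|\mu|$, $u=\log h(|\lambda|)$, $v=\log h(|\mu|)$. We need a bijective $\mathbb Q$-linear $b$ with $b(x)=u$ and $b(y)=v$. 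Property (P) ensures that $x=0\iff u=0$, that $x,y$ are $\mathbb Q$-dependent iff $u,v$ are, and that $x=qy$ forces $u=qv$.

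The cases are then as follows. If $x$ and $y$ are independent, extend both $\{x,y\}$ and $\{u,v\}$ to Hamel bases. These have the same cardinality $\mathfrak c$, so a bijection of the bases extends to a bijective $\mathbb Q$-linear $b$. If they are dependent and nonzero, only $b(y)=v$ needs arranging, and $b(x)=u$ follows from $x=qy$, $u=qv$. If one of them is zero, the corresponding value is also zero and is automatically matched.

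The only point needing care, and the one I expect to be the main obstacle, is the bookkeeping that (P) delivers exactly these linear-algebra conditions. In particular I must check that it rules out $u,v$ being dependent when $x,y$ are independent. This is precisely the "$\Leftarrow$" half of $\lambda\sim\mu\iff h(\lambda)\sim h(\mu)$, applied to $|\lambda|,|\mu|\neq1$. Everything else is the Hamel basis argument already used in the Introduction.
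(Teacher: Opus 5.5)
Your proof is correct and follows the paper's argument essentially step by step. Necessity comes from interpolating with bijective multiplicative maps, which fix $-1$ and satisfy $f(\mu^q)=f(\mu)^q$. Sufficiency reduces to $|\lambda|,|\mu|$, passes to logarithms, picks a bijective additive map via Hamel bases, and extends it oddly to $\mathbb{R}^\ast$.

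You are slightly more explicit than the paper on one point. You check that (P) forces $\log h(|\lambda|)$ and $\log h(|\mu|)$ to be $\mathbb{Q}$-independent whenever $\log|\lambda|$ and $\log|\mu|$ are, which the paper uses tacitly. The only slip is that your logarithmic dictionary for $\sim$ omits the case $\lambda=\mu=1$, which your separate zero case already covers.
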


\begin{proof} 
We already know that each multiplicative function $f : \mathbb{R}^\ast \to  \mathbb{R}^\ast$ maps the set of all positive real numbers to itself.
Moreover, if $f$ is a bijection of $ \mathbb{R}^\ast$ onto itself, then it maps the set of all positive real numbers bijectively onto itself and for every pair of positive real numbers $\lambda , \mu$ we have $\lambda \sim \mu \iff f(\lambda) \sim f (\mu)$, and if $\lambda = \mu^q$ 
for some $q\in \mathbb{Q}^\ast$
then $f(\lambda) = f(\mu)^q$. Further, the bijectivity of $f$ yields $f(-1) = -1$, and therefore, $f(-\lambda) = - f(\lambda)$ holds for every nonzero real $\lambda$. It follows that the local automorphisms of $\mathbb R^\ast$ have the property (LAR).

To prove the converse we take any function $h$ with the property (LAR) and select a pair of real numbers $\lambda , \mu$. Assume first that both $\lambda$ and $\mu$ are positive. If $\lambda \sim \mu$, that is, $\lambda = \mu^q$ for some nonzero rational number $q$, then there exists a bijective additive function $a : \mathbb{R} \to \mathbb{R}$ such that 
$$
a( \log \mu ) = \log h (\mu).
$$
Indeed, this is trivial in the case when $\mu=1$. If $\mu\not=1$, then also $h (\mu) \not=1$, and the existence of a bijective $a$ satisfying the above equality is again obvious.
The function $f : \mathbb{R}^\ast \to  \mathbb{R}^\ast$ defined by $$f( \xi ) = 
e^{ a ( \log \xi) }$$
for $\xi \in (0, \infty)$ and $f( \xi) = - f(-\xi)$, $\xi \in (-\infty , 0)$, is clearly bijective and multiplicative and $f(\mu ) = h (\mu)$ and $f(\lambda) = f(\mu^q) = (f(\mu))^q = (h(\mu))^q=h(\lambda)$.

In the case where $\lambda \not\sim \mu$, we need to distinguish two cases. The first one is that $\lambda \not= 1$ and $\mu \not= 1$.
Then $\log \lambda$ and $\log \mu$ are $\mathbb{Q}$-linearly independent. Therefore we can find a bijective additive function $a : \mathbb{R} \to \mathbb{R}$ such that 
$$
a( \log \lambda ) = \log h (\lambda) \ \ \ {\rm and} \ \ \
a( \log \mu ) = \log h (\mu) .
$$
The function $f : \mathbb{R}^\ast \to  \mathbb{R}^\ast$ defined by $$f( \xi ) = 
e^{ a ( \log \xi) }$$
for $\xi \in (0, \infty)$ and $f( \xi) = - f(-\xi)$, $\xi \in (-\infty , 0)$, is clearly bijective and multiplicative and we have $f(\mu ) = h (\mu)$ and $f(\lambda)  = h(\lambda)$. The case where $\lambda = 1$ or $\mu = 1$ is trivial.

We continue with the possibility that one of the real numbers $\lambda , \mu$ is positive and the other one is negative, say, $\lambda >0$ and $\mu < 0$. Then as before we can find 
 a bijective multiplicative function $f : \mathbb{R}^\ast \to  \mathbb{R}^\ast$ such that 
$h (\lambda ) = f (\lambda)$ and $h (|\mu |) = f (|\mu|)$. Since $h$ has property (LAR) and $f$ is bijective we have $h(\mu ) = - h( -\mu)$ and  $f(\mu ) = - f( -\mu)$, and therefore,  $h(\mu ) =  f( \mu)$. In the same way we can treat the case where both $\lambda$ and $\mu$ are negative.
\end{proof}

After this, the elements of the set $\LMre$ that appear in Theorem \ref{generalreal} describing the structure of the local automorphisms of $GL_n(\mathbb R)$ can be characterized as follows.

\begin{proposition}\label{P:lmr1}
Let $f:\mathbb R^\ast\to \mathbb R^\ast$. Then $f\in \LMre$ if and only if the following two conditions hold.
\begin{itemize}
\item[(i)]
The function $\lambda\mapsto f(\lambda)^n\lambda$ has property (LAR),
\item[(ii)]
$f((0,\infty))\subset (0,\infty)$, and either
$f((-\infty,0))\subset (0,\infty)$ or 
$f((-\infty,0))\subset (-\infty, 0)$.
\end{itemize}
(The latter possibility in (ii) appears only when $n$ is even.)
\end{proposition}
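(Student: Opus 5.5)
The plan is to reduce both directions to Proposition \ref{scareal}, using the auxiliary function $F(\lambda)=f(\lambda)^n\lambda$ and, for a multiplicative $g$, the function $f_g(\lambda)=g(\lambda)^n\lambda$, which is an automorphism of $\mathbb R^\ast$ exactly when $g\in\Mre$.

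\emph{Necessity.} Let $f\in \LMre$ and fix $t,s\in\mathbb R^\ast$. Choose $g\in \Mre$ with $f(t)=g(t)$ and $f(s)=g(s)$. Then $F(t)=f_g(t)$ and $F(s)=f_g(s)$, where $f_g$ is an automorphism of $\mathbb R^\ast$. Since $t,s$ are arbitrary, $F$ is a local automorphism of $\mathbb R^\ast$, and Proposition \ref{scareal} gives (i).

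For (ii) we use two facts already noted after Observation \ref{polakur}. First, every multiplicative $g$ maps $(0,\infty)$ into itself, so $f(t)=g(t)>0$ for $t>0$. Second, $g(-1)\in\{1,-1\}$, and for $\mu<0$ the sign of $g(\mu)=g(-1)g(-\mu)$ equals $g(-1)$. Now take two negative numbers $\mu,\nu$ and interpolate $f$ at both by one and the same $g$. Then $f(\mu)$ and $f(\nu)$ have the common sign $g(-1)$, so $f$ has constant sign on $(-\infty,0)$. Finally, $f_g(-1)=g(-1)^n(-1)$ must equal $-1$, because $f_g$ is an automorphism of $\mathbb R^\ast$. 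Hence $g(-1)=-1$ is possible only for even $n$, which is the parenthetical remark.

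\emph{Sufficiency.} Assume (i) and (ii), and fix $t,s\in\mathbb R^\ast$. By (i) and Proposition \ref{scareal}, $F$ is a local automorphism of $\mathbb R^\ast$. So there is an automorphism $F_0$ of $\mathbb R^\ast$ with $F_0(t)=F(t)$ and $F_0(s)=F(s)$. We construct the required $g$ as a multiplicative $n$th root of $F_0(\lambda)/\lambda$, in three steps.

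\begin{enumerate}
\item On $(0,\infty)$, set $g(\mu)=\sqrt[n]{F_0(\mu)/\mu}$ (the positive root). This is well defined because $F_0$ maps $(0,\infty)$ into itself, and it is multiplicative.
\item Let $\varepsilon\in\{1,-1\}$ be the constant sign of $f$ on $(-\infty,0)$ given by (ii). For $\mu<0$, set $g(\mu)=\varepsilon\, g(-\mu)$. Then $g$ is multiplicative on $\mathbb R^\ast$.
\item Check that $f_g=F_0$. On positives this holds by construction. On negatives it reduces to $\varepsilon^n=1$, since $F_0(-1)=-1$.
\end{enumerate}

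The condition $\varepsilon^n=1$ in step 3 is the only delicate point. Suppose $n$ is odd and $\varepsilon=-1$. Then for $\mu<0$ we would get $F(\mu)=f(\mu)^n\mu>0$, contradicting $F(\mu)=-F(-\mu)<0$, which holds by (LAR). So (i) excludes this case. It follows that $f_g=F_0$ and therefore $g\in\Mre$.

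It remains to verify $g(t)=f(t)$ (and likewise at $s$).
\begin{itemize}
\item If $t>0$: $f(t)>0$ and $f(t)^n=F(t)/t=F_0(t)/t$. So $f(t)$ is the positive $n$th root, which is $g(t)$.
\item If $t<0$: $|f(t)|^n=F_0(t)/t=F_0(|t|)/|t|$, so $|f(t)|=g(|t|)$. Also $f(t)$ has sign $\varepsilon$. Hence $f(t)=\varepsilon g(|t|)=g(t)$.
\end{itemize}
Thus $f$ agrees with some $g\in\Mre$ at any two points, i.e., $f\in\LMre$.
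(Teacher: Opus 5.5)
Your proposal is correct and follows essentially the same route as the paper. Necessity goes through Proposition \ref{scareal} and the sign facts after Observation \ref{polakur}. Sufficiency interpolates $\lambda\mapsto f(\lambda)^n\lambda$ by an automorphism of $\mathbb R^\ast$ and then takes a multiplicative $n$th root.

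The one cosmetic difference is in how the sign on the negative half-line is handled. You build the sign $\varepsilon$ into $g$ from the start and explicitly rule out $\varepsilon=-1$ for odd $n$ via (LAR). The paper instead first takes some $g\in\Mre$ with the right $n$th powers and, for even $n$, flips $g$ on $(-\infty,0)$ when needed.
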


\begin{proof}
The necessity of the conditions $(i), (ii)$ follows from Proposition \ref{scareal} and Observation \ref{polakur}.

Suppose now that
$f:\mathbb R^\ast \to \mathbb R^\ast$ satisfies $(i)$ and $(ii)$.
Apparently,
for every automorphism $\alpha$ of $\mathbb R^\ast$ there is an element $g\in \Mre$ such that $\alpha(\lambda)=g(\lambda)^n\lambda$ for all $\lambda\in \mathbb R^\ast$.

Therefore, selecting any two elements $\lambda,\mu$ of $\mathbb R^\ast$, we have $g\in \Mre$ such that
$$
f(\lambda)^n\lambda= g(\lambda)^n\lambda,\quad
f(\mu)^n\mu= g(\mu)^n\mu.
$$
If $n$ is odd, we immediately obtain that $f(\lambda)=g(\lambda)$ and $f(\mu)=g(\mu)$. 

Assume that $n$ is even. If $\lambda,\mu$ are positive, then we again have $f(\lambda)=g(\lambda)$ and $f(\mu)=g(\mu)$. 
In the remaining cases, we use property $(ii)$ and the fact that, as one can easily see (or refer to Observation \ref{polakur}),  one can alter $g$ on $(-\infty ,0)$ to its negative and obtain a function which also belongs to the class $\Mre$. This way we can conclude that at any two points of $\mathbb R^\ast$, the function $f$ can be interpolated by some element of $\Mre$. This finishes the proof.
\end{proof}

So, we now have a fairly good picture about the local automorphisms of the real general linear group. One may suspect that the complex case is far more complicated. Indeed, we do not have the precise description of the local automorphisms of $GL_n(\mathbb C)$, we leave that question as an open problem. 

We 
next move to the complex special unitary group $SU_n(\mathbb C)$ and show that all of its local automorphisms are in fact automorphisms.

\begin{theorem}\label{T:SU}
Every local automorphism of $SU_n(\mathbb C)$ is an automorphism.
\end{theorem}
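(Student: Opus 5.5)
Following the short proof of Theorem \ref{specialreal}, I would turn $\phi$ into a linear map on the full matrix algebra and then invoke a linear preserver result. By Proposition \ref{P:SU}, every automorphism of $SU_n(\mathbb C)$ is $A\mapsto TAT^{-1}$ or $A\mapsto T\overline{A}T^{-1}$ with $T$ unitary.

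The first step is to fix a basis of test elements. I would choose a family $\mathcal B=\{B_1,\dots,B_{n^2}\}\subset SU_n(\mathbb C)$ spanning $M_n(\mathbb C)$ over $\mathbb C$. For instance, take diagonal unitaries with determinant $1$ together with suitable unitary rotations in coordinate planes. Every $B_k$ should have eigenvalues, and every relevant product should have traces, that are not invariant under complex conjugation. For example, let every $B_k$ have spectrum $\{\omega^{-(n-1)},\omega,\dots,\omega\}$ with $\omega$ a fixed non-real root of unity such that $\overline{\omega}$ gives a different spectrum. Pairing two such elements shows that either all $\phi(B_k)$ are unitarily similar to $B_k$, or all are similar to $\overline{B_k}$. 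Composing $\phi$ with the automorphism $A\mapsto \overline A$ if necessary, I would assume the former.

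Next I must exclude the conjugate-type interpolation when $B\in\mathcal B$ is paired with an arbitrary $A$. If $\phi(B)=T\overline{B}T^{-1}$ were unitarily similar to $B$, then $B$ and $\overline B$ would have the same spectrum, a contradiction. Hence for every $A\in SU_n(\mathbb C)$ and $B\in\mathcal B$ there is a unitary $S$ with $\phi(A)=SAS^{-1}$ and $\phi(B)=SBS^{-1}$. This gives $\tr(\phi(A)\phi(B))=\tr(AB)$.

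Exactly as in the short proof of Theorem \ref{specialreal}, trace duality then shows two things. First, $\{\phi(B_k)\}$ is linearly independent. Second, $\psi(\sum\lambda_kA_k)=\sum\lambda_k\phi(A_k)$ is a well-defined bijective $\mathbb C$-linear map on $M_n(\mathbb C)$ extending $\phi$. This uses that $SU_n(\mathbb C)$ spans $M_n(\mathbb C)$ over $\mathbb C$.

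To identify $\psi$, pair $B_1$ with unitaries of the form $\omega^{-(n-1)}P+\omega(I-P)$, where $P$ is a rank-one orthogonal projection. Linearity of $\psi$, together with $\psi(I)=\phi(I)=I$ (the center $\{\epsilon I\}$ is fixed pointwise by all automorphisms), gives $\psi(P)=SPS^{-1}$. So $\psi$ preserves rank-one idempotents, or at least rank-one projections. The theorem of \cite{OmlSem} then gives $\psi(A)=TAT^{-1}$ or $\psi(A)=TA^tT^{-1}$. The transpose form I would rule out as before, pairing a non-normal-looking element with a diagonal $D$ of distinct eigenvalues that is not similar to $\overline D$. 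For instance, use a unitary $A$ whose restriction to the first two coordinates is a non-symmetric rotation-like matrix with respect to the commutant of $D$. This forces $A^t=RAR^{-1}$ with $R$ diagonal, which fails for a suitable choice of $A$.

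Finally, $\phi(A)=TAT^{-1}$ maps $SU_n(\mathbb C)$ into unitaries, and in particular maps all rank-one projections to projections. Hence $T^{-1}=cT^*$, and after rescaling $T$ is unitary, so $\phi$ is an automorphism by Proposition \ref{P:SU}.

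The main obstacle is the first step: constructing the $n^2$ elements of $SU_n(\mathbb C)$ that form a basis and whose spectra rigidly distinguish them from their conjugates. A secondary difficulty is checking that the rank-one-projection version of the preserver theorem suffices, or else producing enough unitaries to recover arbitrary rank-one idempotents by linearity. Since rank-one projections span $M_n(\mathbb C)$, I expect a linear bijection mapping them to rank-one idempotents unitarily similar to them to still be handled by the standard argument.
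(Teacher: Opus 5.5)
Your route is viable but genuinely different from the paper's, and two steps need repair before it works.

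\textbf{Comparison with the paper.} The paper never linearizes $\phi$. It works with $\Eu=\{\alpha P+\beta(I-P)\}$ and uses preservation of commutativity to obtain an orthogonality-preserving map $\xi$ on $\Pn$. It then applies the nonlinear Uhlhorn-type theorem of \cite{Pank} and excludes the transpose case by an orthonormal-basis argument. Finally it recovers $\phi(A)$ from $\tr(\phi(A)P)=\tr(AP)$. You instead first extend $\phi$ to a linear bijection $\psi$ of $M_n(\mathbb C)$ by trace duality, and then use a linear preserver theorem. That transplants the short proof of Theorem \ref{specialreal}.

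\textbf{Your ``main obstacle'' is not one.} Take $\omega\neq\pm1$ with $\omega^n\neq 1$; without $\omega^n\neq1$ your $B_k$ collapse to $\omega I$. Then $\{\omega^{-(n-1)}P+\omega(I-P) : P\in\Pn\}$ spans $M_n(\mathbb C)$. Differences give $(\omega^{-(n-1)}-\omega)(P-Q)$, and the $P-Q$ span the trace-zero matrices. Each element has trace $\omega^{-(n-1)}+(n-1)\omega\neq 0$. So you can extract $\mathcal B$ from the paper's own set $\Eu$.

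\textbf{First step to repair: the preserver theorem.} \cite{OmlSem} is about bijective linear maps preserving rank-one idempotents. You only know that $\psi$ maps rank-one projections to rank-one projections. Since $SU_n(\mathbb C)$ consists of normal matrices, you cannot reach non-Hermitian idempotents this way. The fix is already in your hands. From $\tr(\psi(X)\psi(B_j))=\tr(XB_j)$, linearity in $X$ and expansion of $Y$ in the basis $\mathcal B$ give
$\tr(\psi(X)\psi(Y))=\tr(XY)$ for all $X,Y\in M_n(\mathbb C)$.
In particular $\tr(\psi(P)\psi(Q))=\tr(PQ)$ on $\Pn$. Wigner's theorem then gives $\psi(P)=UPU^*$ or $\psi(P)=UP^tU^*$ with $U$ unitary. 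Alternatively, note that $PQ=0$ iff $\tr(PQ)=0$ and use \cite{Pank} as the paper does. Linearity extends this to all of $M_n(\mathbb C)$. This also makes your final step ($T^*T=cI$) unnecessary.

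\textbf{Second step to repair: excluding the transpose.} The $2\times 2$ block you propose cannot give a contradiction. For any $2\times2$ unitary $A$ one has $|a_{12}|=|a_{21}|$. So either $A$ is diagonal, or $R=\mathrm{diag}(1,a_{12}/a_{21})$ satisfies $RAR^{-1}=A^t$. This is exactly what happens for rotations, with $R=\mathrm{diag}(1,-1)$. You need three coordinates, which is where $n\ge 3$ enters. For example, take $A$ to be the permutation matrix of the $3$-cycle $e_1\mapsto e_2\mapsto e_3\mapsto e_1$, extended by the identity; it has determinant $1$. For every diagonal $R$ we get $(RAR^{-1})_{12}=0\neq 1=(A^t)_{12}$.

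\textbf{Minor correction.} The automorphism $A\mapsto\overline A$ sends $\epsilon I$ to $\overline{\epsilon}I$, so not every automorphism fixes the center pointwise. You only need $\phi(I)=I$, which does hold.

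With these repairs the linearization route is a complete alternative proof. It makes the final identification of $\phi$ immediate from linearity, at the price of constructing $\psi$ first.
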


\begin{proof}
Denote by $\Pn$ the set of all rank-one projections (Hermitian idempotents) in $M_n(\mathbb C)$.
Pick and fix complex numbers $\alpha,\beta\in \Se$ such that $\alpha^n\neq 1$, $\alpha\beta^{n-1}=1$, $\overline{\{\alpha,\beta\}}\neq \{\alpha,\beta\}$. Consider the set $\Eu$ of all elements of $SU_n(\mathbb C)$ of the form
$$
\alpha P+\beta (I-P)
$$
where $P\in \Pn$. By the forms of the automorphisms of $SU_n(\mathbb C)$, see \eqref{F:su}, the image of such an element under $\phi$ is either of the form
$\alpha SPS^{-1} +\beta S(I-P)S^{-1}$
or of the form
$\overline{\alpha}S P^t S^{-1} +\overline{\beta} S(I-P^t)S^{-1}$ with some $S\in U_n(\mathbb C)$, depending on $P$.

By the local property of $\phi$, examining spectra, we see that 
either we always have the first form or we always have the second form for the elements of $\Eu$, the possibility of "mixing" is ruled out. 
Composing $\phi$ with the automorphism $A\mapsto \overline{A}$ if necessary (i.e., in the latter case above), we can and do assume that $\phi$ maps $\Eu$ into itself.

Therefore, we have a map $\xi:\Pn\to \Pn$ such that
$$
\phi(\alpha P+\beta (I-P))=\alpha \xi(P)+\beta(I-\xi(P))
$$
holds for all $P\in \Pn$. Observe that $\alpha\neq \beta$.
As we have already referred to it before, local automorphisms preserve commutativity and hence, for any $P,Q\in \Pn$ with $PQ=0$ we have the commutativity of $\xi(P)$ and $\xi(Q)$, which implies that either $\xi(P)=\xi(Q)$ or $\xi(P)\xi(Q)=0$. Since the first equality would imply that $P=Q$ which is untenable, it follows that 
$\xi(P)\xi(Q)=0$. Consequently, $\xi$ is a map on $\Pn$ which preserves orthogonality. The structure of such maps is known.
By Theorem 1 in \cite{Pank}, it follows that $\xi$ is a so-called Wigner transformation. This means that there is a $T\in U_n(\mathbb C)$ such that either
we have 
$$
\xi(P)=TPT^{-1}, \quad P\in \Pn
$$
or we have 
$$
\xi(P)=TP^t T^{-1}, \quad P\in \Pn.
$$ 
Composing $\phi$ with the automorphism $A\mapsto T^{-1}AT$, it follows that we can and do assume that $\xi$ is either the identity on $\Pn$ or the transposition on $\Pn$.

In the next part of the proof we show that this latter possibility cannot occur. To see this, assume that $\xi(P)=P^t$ for all $P\in \Pn$ which implies that $\phi$ is the transposition on $\Eu$. Pick any $A\in SU_n(\mathbb C)$. Choose an arbitrary rank-one projection $P\in \Pn$ and consider $E=\alpha P+\beta (I-P)$. By the local property of $\phi$, we have
$S\in U_n(\mathbb C)$ such that
$$
E^t=\phi(E)=SES^{-1}, \quad \phi(A)=SAS^{-1}.
$$
Multiplying these two equations and taking trace, we have 
$$
\tr (\phi(A)E^t)=\tr (AE)=\tr(AE)^t=\tr A^t E^t
$$
and, since $\tr(\phi(A))=\tr (A)=\tr (A^t)$, we can infer that $\tr (\phi(A)P^t)=\tr (A^t P^t$). 
Since $P\in \Pn$ is arbitrary, we obtain that $\phi(A)=A^t$. This means that $\phi$ is the transposition on the whole set $SU_n(\mathbb C)$. 

Consider now $n$ different elements $\epsilon_1,...,\epsilon_n$ of the unit circle $\Se$ such that their set is not invariant under the conjugation of the complex field and whose product is 1. Choose  a diagonal matrix $A$ in $SU_n(\mathbb C)$ with diagonal elements $\epsilon_1,...,\epsilon_n$.
Let $b_1,...,b_n$ be any orthonormal basis in $\mathbb C^n$ and $B$ the unitary with eigenvalues $\epsilon_1,...,\epsilon_n$ and corresponding eigenvectors $b_1,...,b_n$. By the local property of $\phi$, we have an $S\in U_n(\mathbb C)$ such that 
$$
A=A^t=\phi(A)=SAS^{-1},\quad B^t = \phi(B)=S BS^{-1}. 
$$
From the former equality we see that $S$ is necessarily diagonal while from the latter equality we can easily deduce that
for some scalars $\gamma_i$ of modulus 1 we have 
\begin{equation}\label{E:M1}
Sb_i =\gamma_i \overline{b_i}
\end{equation}
Indeed, from $Bb_i=\epsilon_i b_i$, dividing by $\epsilon_i$ and taking conjugates and then multiplying by $\overline{B}^{-1}=B^t$, we have  $B^t\overline b_i=\epsilon_i \overline b_i$. We know that 
$B^tS=SB$ which implies $B^tSb_i=S\epsilon _i b_i=\epsilon_i Sb_i$ and then we deduce \eqref{E:M1}.

So, for any orthonormal basis $b_1,...,b_n$ in $\mathbb C^n$
we have a diagonal $S\in U_n(\mathbb C)$ and scalars $\gamma_i$ of modulus 1 such that the equalities \eqref{E:M1} hold. But this is not true. Indeed, if 
$$
b_1=(1/\sqrt{n})  [1,...,1]^t,
$$
then from \eqref{E:M1} we necessarily have $S=\gamma_1 I$. Next, choosing  
$b_2$ as the normalization of 
$$
[1,i,-(1+i),0,...]^t,
$$
we trivially see that \eqref{E:M1} cannot hold for $i=2$.

Thus we obtain that the Wigner transformation $\xi$ above is necessarily the identity on $\Pn$.
Then $\phi(E)=E$ for all $E\in \Eu$ and we can easily complete the proof. Indeed, as above, pick any $A\in SU_n(\mathbb C)$. Choose an arbitrary rank-one projection $P\in \Pn$ and consider $E=\alpha P+\beta (I-P)$. By the local property of $\phi$, we have
$S\in U_n(\mathbb C)$ such that
$$
E=\phi(E)=SES^{-1}, \quad \phi(A)=SAS^{-1}.
$$
From this we have 
$
\tr (\phi(A)E)=\tr (AE)
$
and then we deduce 
$$
\tr (\phi(A)P)=\tr (A P)  
$$
for any $P\in \Pn$. This implies $\phi(A)=A$ for all $A\in SU_n(\mathbb C)$ and we are done.
\end{proof}

For the description of the local automorphisms of the unitary group $U_n(\mathbb C)$, we introduce a collection of functions on $\Se$.
Let us denote 
\begin{equation*}\label{E:Mu}
\begin{aligned}
\Mu=&\{g: \Se \to \Se\, |\, g \text{ is multiplicative}, f(\lambda) = g(\lambda)^n \lambda , \,\lambda \in \Se \\ &\text{ is an automorphism of } \Se\}.
\end{aligned}
\end{equation*}
Proposition \ref{P:autunitary} tells that the automorphisms of $U_n (\mathbb{C})$ are exactly the transformations $\phi: U_n (\mathbb{C}) \to 
U_n (\mathbb{C})$ which are either of the form
\begin{equation*}
\phi (A) = g(\det A) TA T^{-1} , \ \ \ A \in  U_n (\mathbb{C}),
\end{equation*}
or of the form
\begin{equation*}
\phi (A) = g(\det \overline{A}) T\overline{A} T^{-1} , \ \ \ A \in  U_n (\mathbb{C}).
\end{equation*}
where $g\in \Mu$ and $T\in U_n (\mathbb{C})$.

Let us next set
$$
\LMu=\{f:\Se\to \Se \,|\, \forall \lambda,\mu \in \Se :\, \exists g\in \Mu : f(\lambda)=g(\lambda), f(\mu)=g(\mu)\}.
$$ 

The structure of the local automorphisms of $U_n(\mathbb C)$ is given in the next result.

\begin{theorem}\label{unitary}
For a map $\phi : U_n (\mathbb{C}) \to U_n (\mathbb{C})$ the following assertions are equivalent.
\begin{itemize}
\item $\phi$ is a local automorphism. 
\item  There exist $T \in U_n (\mathbb{C})$ and a function $f\in \LMu$ such that either
$$
\phi (A) =   f(\det A) TA T^{-1} , \ \ \ A \in  U_n (\mathbb{C}),
$$
or
$$
\phi (A) =   f(\det \overline{A}) T\overline{A} T^{-1} , \ \ \ A \in  U_n (\mathbb{C}).
$$
\end{itemize}
\end{theorem}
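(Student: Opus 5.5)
The plan mirrors the proof of Theorem \ref{generalreal}, with Theorem \ref{T:SU} playing the role of Theorem \ref{specialreal}. The sufficiency direction is immediate from Proposition \ref{P:autunitary}. Indeed, if $f\in\LMu$ and $A,B\in U_n(\mathbb C)$ are given, choose $g\in\Mu$ agreeing with $f$ at $\det A$ and $\det B$, or at $\det\overline A$ and $\det\overline B$. Then the automorphism built from $g$ and $T$ interpolates $\phi$ at $A$ and $B$.

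For necessity, let $\phi$ be a local automorphism.
\begin{itemize}
\item \emph{Reduction to $SU_n$.} Every automorphism in Proposition \ref{P:autunitary} maps $SU_n(\mathbb C)$ into itself, because $\det(g(1)TAT^{-1})=g(1)^n=1$ when $\det A=1$. So $\phi$ restricts to a local automorphism of $SU_n(\mathbb C)$, and the restriction sends $A$ to $g_{A}(1)\cdots$, which equals $TAT^{-1}$ or $T\overline A T^{-1}$ pointwise.
\item \emph{Normalisation.} By Theorem \ref{T:SU} and Proposition \ref{P:SU}, after composing with $A\mapsto T^{-1}AT$ and possibly with $A\mapsto\overline A$ (both automorphisms, and a composition of a local automorphism with an automorphism is again local), we may assume $\phi(A)=A$ for all $A\in SU_n(\mathbb C)$. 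The task is then to show $\phi(A)=f(\det A)A$ for some $f\in\LMu$.
\item \emph{Pairing against a rigid element.} Fix $A\in U_n(\mathbb C)$ and pick $E\in\Eu$, with $\alpha,\beta$ as in the proof of Theorem \ref{T:SU}. Pairing $A$ with $E$ gives $S\in U_n$ and $g\in\Mu$ with $E=\phi(E)=SES^{-1}$ and $\phi(A)=g(\det A)SAS^{-1}$. The conjugate form is excluded, since $\overline E$ has spectrum $\{\overline\alpha,\overline\beta\}\ne\{\alpha,\beta\}$ and so cannot equal $\phi(E)=E$ up to similarity. Hence $\phi(A)$ is a scalar times a unitary conjugate of $A$, and $S$ commutes with $E$.
\item \emph{Extracting a scalar.} We get $\tr(\phi(A)E)=c_E\,\tr(AE)$ and $\tr\phi(A)=c_E\tr A$, where $c_E=g(\det A)$ a priori depends on $E$. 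To remove this dependence, also pair $A$ with a fixed $E_0$ chosen so that its commutant forces the scalar. A cleaner route, following Theorem \ref{generalreal}, is to first treat scalar multiples $\lambda B$ with $B\in SU_n$: pairing $\lambda B$ with $B$ gives $\phi(\lambda B)=h_B(\lambda)B$, where $h_B(\lambda)=g(\lambda^n)\lambda$ after taking into account the determinant convention.
\end{itemize}

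The main obstacle is showing that $h_B$ is independent of $B$, which by the argument of Theorem \ref{generalreal} shows $h_B=h_I$. There, determinants were used to pin down signs. Here $\lambda B$ similar to $\mu B$ with $|\lambda|=|\mu|=1$ allows any root-of-unity ambiguity $\lambda/\mu$ that permutes the spectrum of $B$. I would proceed in two steps:
\begin{enumerate}
\item For $B$ whose spectrum is not invariant under any nontrivial rotation, pair $\lambda B$ with $\lambda I$ to get $h_B=h_I$.
\item For general $A\in SU_n$, as in Theorem \ref{generalreal}, pick $B\in\Eu$ such that no unitary $S$ commuting with $B$ satisfies $SAS^{-1}=\omega A$ for $\omega\ne1$. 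If $\tr A\neq 0$ this holds automatically by taking traces. Otherwise choose $P$ with $\tr(PA)\ne0$ and use $\tr(BA)=(\alpha-\beta)\tr(PA)$. Pairing $\lambda A$ with $\lambda B$ then gives $h_A(\lambda)=h_I(\lambda)$.
\end{enumerate}

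Since $U_n(\mathbb C)=\Se\cdot SU_n(\mathbb C)$, every $A$ can be written $A=\lambda B$ with $\lambda^n=\det A$. We then get $\phi(A)=h(\lambda)\lambda^{-1}A$. Well-definedness under the choice of $n$th root follows from $h(\omega\lambda)B=\phi(\omega\lambda B)=h(\lambda)(\omega B)$, which holds because $\omega\lambda B=\lambda(\omega B)$ and $\omega B\in SU_n$. This yields $\phi(A)=f(\det A)A$.

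Finally, to see $f\in\LMu$, take $\lambda,\mu\in\Se$ and pair $\lambda^{1/n}D$ with $\mu^{1/n}D$, where $D$ is diagonal in $SU_n$ with nonzero trace and with spectrum not invariant under rotations or conjugation. Taking traces gives $f(\lambda)=g(\lambda)$ and $f(\mu)=g(\mu)$ for a single $g\in\Mu$, so $f\in\LMu$.
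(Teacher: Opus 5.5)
Your architecture is the paper's: restrict to $SU_n(\mathbb C)$, normalise to the identity there by Theorem~\ref{T:SU}, get $\phi(\lambda B)=h_B(\lambda)B$, show $h_B$ does not depend on $B$ using rigid elements, then read off $f$ and verify $f\in\LMu$ by traces. However, the rigidity hypothesis in your Step~1 is too weak.

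When you pair $\lambda B$ with $\lambda I$, the interpolating automorphism may be of the conjugate type. Then $h_I(\lambda)=c:=g(\overline{\lambda}^{\,n})\overline{\lambda}$ and $h_B(\lambda)B=cS\overline{B}S^{-1}$. This only gives $h_B(\lambda)=\omega_0 h_I(\lambda)$, where $\omega_0$ is a rotation carrying the spectrum of $B$ onto its complex conjugate. Excluding nontrivial rotations that fix the spectrum makes $\omega_0$ unique, but not equal to $1$. The same problem blocks Step~2: when you pair $\lambda A$ with $\lambda B$, the conjugate case says nothing about $h_A(\lambda)$. Your earlier exclusion of the conjugate form when pairing with $E$ worked only because the scalar there is $g(1)=1$.

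What is needed, and what the paper imposes, is that no rotation maps the spectrum onto its conjugate. This is not automatic for your $B\in\Eu$ with $\alpha,\beta$ as in Theorem~\ref{T:SU}. For $n=4$, the values $\alpha=e^{i\pi/4}$, $\beta=-\alpha$ satisfy $\alpha^n\neq 1$, $\alpha\beta^{n-1}=1$ and $\overline{\{\alpha,\beta\}}\neq\{\alpha,\beta\}$. Yet multiplication by $\overline{\alpha}^{\,2}=-i$ carries $\{\alpha,\beta,\beta,\beta\}$ onto its conjugate.

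The repair is easy. Take $\alpha$ not a root of unity; then $\beta\neq-\alpha$, since otherwise $\alpha^{2n}=1$. Because the multiplicities $1$ and $n-1\geq 2$ differ, a rotation carrying $\{\alpha,\beta^{(n-1)}\}$ onto its conjugate would force $\alpha^2=\beta^2$, which is now impossible. State Step~1, and the condition on $D$ in your last step, with this requirement.

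With that fix the proof is correct, and it departs from the paper in the extension step. For an arbitrary $B\in SU_n(\mathbb C)$, the paper picks a rigid $A$ with $n$ distinct eigenvalues commuting with $B$. Any unitary $S$ fixing $A$ is then diagonal in a common eigenbasis and automatically commutes with $B$. You instead keep the rigid element in $\Eu$ and adapt it to the arbitrary matrix through $\tr(BA)=(\alpha-\beta)\tr(PA)$ when $\tr A=0$, transplanting the device from Theorem~\ref{generalreal}. Both are equally short: yours reuses $\Eu$ and only needs a rank-one projection with $\tr(PA)\neq 0$, while the paper's uses simultaneous diagonalisation of commuting normal matrices.

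Your well-definedness check via $\omega B\in SU_n(\mathbb C)$ for $\omega^n=1$ replaces the paper's observation that $h(z)/z$ depends only on $z^n$. Your final verification of $f\in\LMu$ matches the paper's. The exploratory bullets about pairing $A$ with $E$ and extracting a scalar are not used and can be dropped.
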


\begin{proof}
Only one direction requires proof.
Let $\phi: U_n (\mathbb{C}) \to U_n (\mathbb{C})$ be a local automorphism. By Proposition \ref{P:autunitary}, the restriction of $\phi$ to $SU_n(\mathbb C)$ is a local automorphism. Therefore, by Theorem \ref{T:SU}, it is an automorphism. It easily follows that without serious loss of generality, we can assume that $\phi$ is the identity on $SU_n(\mathbb C)$.

We can see that for any $A\in SU_n(\mathbb C)$, we have a function $h_A:\Se\to \Se$ such that $\phi(\lambda A)=h_A(\lambda )A$. Indeed, for a given $\lambda \in \Se$, there is $g\in \Mu$ and $S\in U_n(\mathbb C)$ such that
either
$$
A=\phi(A)=SAS^{-1}, \quad \phi(\lambda A)=g(\lambda)^n\lambda SAS^{-1}
$$
or 
$$
A=\phi(A)=S\overline{A}S^{-1}, \quad \phi(\lambda A)=g({\overline{\lambda}})^n\overline{\lambda} S\overline{A}S^{-1}.
$$
In both cases we obtain that $\phi(\lambda A)$ is a scalar multiple of $A$, so we have a function $h:\Se \to \Se$, such that $\phi(\lambda A)=h_A(\lambda) A$, $\lambda \in \Se$.

Select $A\in SU_n(\mathbb C)$ with the following property: the only rotation of $\Se$ which maps the spectrum of $A$ onto itself is the trivial one (the identity) and there is no rotation of $\Se$ that would map the spectrum of $A$ onto its conjugate, i.e., its reflection over the real axis.
We show that $h_A=h_I$. Let $\lambda$ be any element of $\Se$. We have two possibilities. The first one is that we have $g\in \Mu$ and  $S\in U_n(\mathbb C)$ such that
$$
h_A(\lambda)A=\phi(\lambda A)=g(\lambda)^n\lambda SAS^{-1}
$$
and
$$
h_I(\lambda)I=\phi(\lambda I)=g(\lambda)^n\lambda SIS^{-1}.
$$
By the properties of $A$, examining the spectra, from the first equality we deduce that $h_A(\lambda)=g(\lambda)^n \lambda$, while from the second one we trivially get $h_I(\lambda)=g(\lambda)^n\lambda$. Therefore, $h_A(\lambda)=h_I(\lambda)$.

The second possibility is that we have $g\in \Mu$ and $S\in U_n(\mathbb C)$  such that
$$
h_A(\lambda)A=\phi(\lambda A)=g(\overline{\lambda})^n\overline{\lambda}S \overline{A}S^{-1}
$$
and
$$
h_I(\lambda)I=\phi(\lambda I)=g(\overline{\lambda})^n \overline{\lambda}S\overline{ I}S^{-1}.
$$
However, by the conditions on $A$, the first equality cannot hold.

Let now $B\in SU_n(\mathbb C)$ be an arbitrary element. Choose $A$ as above with the additional properties that it has $n$ distinct eigenvalues and it commutes with $B$.
Then, by the local property of $\phi$ and the special properties of $A$, we have a $g\in \Mu$ and
an $S\in U_n(\mathbb C)$ such that
$$
h_A(\lambda)A=\phi(\lambda A)=g(\lambda)^n \lambda SAS^{-1}
$$
and
$$
h_B(\lambda)B=\phi(\lambda B)=g(\lambda)^n\lambda SBS^{-1}.
$$
From the first equality we have $h_A(\lambda)=g(\lambda)^n\lambda$ and then that $A=SAS^{-1}$. Since $A$ has different eigenvalues, $S$ necessary takes a diagonal form with respect to the basis consisting of the eigenvectors of $A$ and, by the commutativity of $A$ and $B$, the same holds for $B$. It follows that $S$ and $B$ commute, so we have $B=SBS^{-1}$. Hence   it follows that $h_B(\lambda)=g(\lambda)^n\lambda=h_A(\lambda)$. We deduce that $h_B=h_A=h_I$.

So, there is a function $h:\Se\to \Se$ such that for any $A\in SU_n(\mathbb C)$ we have
$$
\phi(\lambda A)=h(\lambda )A, \quad \lambda\in \Se.
$$
Let us see what we can say concerning this function $h$.
Again, select $A\in SU_n(\mathbb C)$ as above, that is, with the following properties: the only rotation of $\Se$ which maps the spectrum of $A$ onto itself is the trivial one and there is no rotation of $\Se$ that would transform the spectrum of $A$ onto its reflection over the real axis.
Choose $\lambda,\mu\in \Se$. Then there is a $g\in \Mu$ and an $S\in U_n(\mathbb C)$ such that
$$
h(\lambda)A=\phi(\lambda A)=g(\lambda)^n\lambda SATS^{-1},\quad 
h(\mu)A=\phi(\mu A)=g(\mu)^n\mu SAS^{-1}.
$$
It follows that 
$$
h(\lambda)=g(\lambda)^n\lambda, \quad h(\mu)=g(\mu)^n\mu,
$$
consequently, the values $h(z)/z$ depend only on $z^n$, $z\in \Se$. So, we have a function $k:\Se \to \Se$ such that 
$$
h(z)/z=k(z^n), \quad z\in \Se
$$
and for any $\lambda', \mu'\in \Se$, there is a $g\in \Mu$  such that $k(\lambda')=g(\lambda'), k(\mu')=g(\mu')$. Therefore, we have $k\in \LMu$. 

Finally, for an arbitrary $A\in U_n(\mathbb C)$, and $\lambda\in \Se$ such that $\lambda^n =\det A$, we compute
$$
\phi(A)=\phi\biggl(\lambda\biggl(\frac{1}{\lambda} A\biggr)\biggr)=h(\lambda)\frac{1}{\lambda} A=k(\det A)A.
$$
This completes the proof.
\end{proof}

\section{Concluding remarks}

With the above results we try to attract the attention to an, in our opinion, interesting area of research, the study of local automorphisms of groups. As it apparently follows from the results above, there is a big room for further investigations even concerning the classical groups. We mean, just to start with, the complex general linear group, the special orthogonal group, the  orthogonal group, and then further the real and complex symplectic groups, etc. Of course, one may also consider scalar fields different from the real and the complex ones.

Above we presented statements saying that local automorphisms are global automorphisms only in two cases: for the real special linear group and for the special unitary group.   There are local automorphisms of $GL_n(\mathbb R)$ which are not automorphisms and hence we have a
natural question weather some additional conditions on the local automorphisms guarantee that they are in fact automorphisms. Referring back to the introduction and the paper \cite{LaS}, the first candidate for such an assumption is surjectivity (any local automorphism is injective). However, it is not difficult to create a surjective local automorphism of $GL_n(\mathbb R)$ which is not an automorphism.
To do that, we choose a function $l:\mathbb R\to \mathbb R$ with the following properties: $l(0)=0$ and, on each one-dimensional subspace of $\mathbb R$ over $\mathbb Q$, $l$ acts as a multiplication by some nonzero rational number which number depends on the one-dimensional subspace in question.
Clearly, $l$ is a bijection of $\mathbb R$ and we can and do choose it in a way that it differs from any additive bijection.
Now, define $h:\mathbb R^\ast \to \mathbb R^\ast$ as follows. For any positive real number $\lambda$, we set
$$
h(\lambda)=
\frac{e^{l(\log \lambda)}}{\sqrt[n]{\lambda}},
$$   
while for any negative $\lambda$ we set $h(\lambda)=h(-\lambda)$.  
We prove that $h\in \LMre$. By Propositions \ref{scareal} and \ref{P:lmr1}, we only need to see
that the function $k:\mathbb R^\ast \to \mathbb R^\ast$ defined as
$$
k(\lambda)=e^{n l(\log \lambda)}
$$
for positive $\lambda$
and $k(\lambda)=-k(-\lambda)$ for negative $\lambda$ is a local automorphism of $\mathbb R^\ast$. That can be checked rather easily considering any two nonzero real numbers $\lambda,\mu$ and distinguishing some cases depending on the signs of $\lambda,\mu$ and also on whether $\log |\lambda|,\log |\mu|$ are linearly dependent or linearly independent over $\mathbb Q$ (cf. the proof of Proposition \ref{scareal}).
It then follows by Theorem \ref{generalreal} that the  map $\phi(A)=h(\det A)A$, $A\in GL_n(\mathbb R)$ is a local automorphism of $GL_n(\mathbb R)$. However, by Proposition \ref{P:GLaut}, this $\phi$ is not an automorphism as $l$ differs from all additive bijections of $\mathbb R$.
It remains to see that $\phi$ is surjective which is proved if it is shown that for any $A\in GL_n(\mathbb R)$, the equation
$$
h(\lambda^n \det A)\lambda=1
$$ 
has a solution $\lambda\in \mathbb R^\ast$. But, by the surjectivity of $l$, this is rather easy to verify.

Besides surjectivity, another possible extra condition may be to bring in topology and consider the given groups as topological groups and study their topological automorphisms, that is, algebraic automorphisms which are also homeomorphisms. Sure, that means that we leave the area of pure algebra, but in some sense it looks a better choice than surjectivity. 
It is easy to describe the topological automorphisms of $GL_n(\mathbb R)$. Indeed, by Proposition \ref{P:GLaut}, one only needs to determine the continuous elements of $\Mre, \Mrk$.
Since the continuous additive bijections of $\mathbb R$ are the nonzero scalar multiples of the identity, by Observation \ref{polakur}, we can see that the continuous function $g:\mathbb R^\ast \to \mathbb R^\ast$ belongs to $\Mre$ if and only if the following hold. There is a real constant $c\neq -1/n$ such that $g(\lambda)=\lambda^c$, $\lambda>0$ and, as for the behaviour of $g$ on the real negative half-line, in the case where $n$ is odd, we have $g(\lambda)=g(-\lambda)$, $\lambda<0$, while in the case where $n$ is even we have either
$g(\lambda)=g(-\lambda)$, $\lambda<0$ or $g(\lambda)=-g(-\lambda)$, $\lambda<0$. In a similar fashion, one can characterize the continuous elements of $\Mrk$. After this, using Theorem \ref{generalreal}, it is not difficult to verify that every map on $GL_n(\mathbb R)$ which can be interpolated by  topological automorphisms at any two points of $GL_n(\mathbb R)$ (i.e., any local topological automorphism of $GL_n(\mathbb R)$) is necessarily a topological automorhism.

We can deduce a similar statement concerning the group of topological automorphisms of
$SL_n(\mathbb C)$ using the well-known fact that the only continuous nonzero field endomorphims of $\mathbb C$ are the identity and the conjugation.
Finally, since there are only two topological automorphisms of $\Se$ (namely the identity and the conjugation), it is not difficult to verify that every map on $U_n(\mathbb C)$ which can be interpolated by topological automorphisms of $U_n(\mathbb C)$ at any two points, is necessarily a topological automorphism.

\end{document}